\newtheorem{theorem}{Theorem}
\newtheorem{definition}{Definition}
\newtheorem{proposition}{Proposition}
\newtheorem{lemma}{Lemma}
\newtheorem{corollary}{Corollary}
\newtheorem{claim}{Claim}
\theoremstyle{definition}\newtheorem{example}{Example}
\newtheorem{remark}{Remark}
\DeclareMathOperator*{\argmax}{arg\,max}
\newcommand{\bbar}[1]{\overline{#1}}
\newcommand{\ttilde}[1]{\widetilde{#1}}
\def\bs{\backslash}
\def\di{\displaystyle}
\def\eps{\varepsilon}
\def\t{\tau}
\def\d{\mathrm{d}}
\def\R{\mathbb{R}}
\def\N{\mathbb{N}}
\def\T{\mathbb{T}}
\def\NNN{\mathrm{N}}
\def\U{\mathrm{U}}
\def\B{\mathrm{B}}
\def\BB{\overline{\B}}
\def\C{\mathrm{C}}
\def\D{\mathrm{D}}
\def\P{\mathrm{P}}
\def\K{\mathrm{K}}
\def\AC{\mathrm{AC}}
\def\PC{\mathrm{PC}}
\def\L{\mathrm{L}}
\def\E{\mathrm{E}}
\def\proj{\mathrm{proj}}
\def\Pont{\mathrm{Pont}}
\def\Int{\mathrm{Int}}
\def\conv{\mathrm{conv}}
\def\UU{\mathcal{U}}
\def\ZZ{\mathcal{Z}}
\def\VV{\mathcal{V}}
\def\II{\mathcal{I}}
\def\PP{\mathcal{P}}
\def\TT{\mathcal{T}}
\def\NN{\mathcal{N}}
\def\BBB{\mathcal{B}}
\begin{document}

\title{Robustness under control sampling of reachability in fixed time for nonlinear control systems}

\author{Lo\"ic Bourdin and Emmanuel Tr\'elat
\thanks{L. Bourdin is with University of Limoges, XLIM Research Institute, F-87000 Limoges, France, loic.bourdin@unilim.fr.}
\thanks{E. Tr\'elat is with Sorbonne Universit\'e, Laboratoire Jacques-Louis Lions, F-75005 Paris, France, emmanuel.trelat@sorbonne-universite.fr.}
}

\maketitle

\begin{abstract}
Under a regularity assumption we prove that reachability in fixed time for nonlinear control systems is robust under control sampling.
\end{abstract}

\begin{IEEEkeywords}
Nonlinear control systems, reachability, sampled-data controls, piecewise constant controls, regular controls.
\end{IEEEkeywords}

\IEEEpeerreviewmaketitle

\section{Introduction and main result}\label{sec_intro}
Let $n$, $m \in\N^*$ and~$T > 0$ be fixed. In this work we consider the general nonlinear control system
\begin{equation}\label{contsys}\tag{CS}
\dot{x}(t) = f ( x(t),u(t),t ), \quad \text{a.e.\ } t \in [0,T],
\end{equation}
where the \textit{dynamics} $f : \R^n \times \R^m \times [0,T] \to \R^n$ is a continuous mapping, of class~$\C^1$ with respect to its first two variables.\footnote{This regularity assumption can be relaxed at several occasions in the paper (see Remark~\ref{rem_relaxregularityonf} for details).} We say that a pair~$(x,u)$ is a solution to~\eqref{contsys} if~$x \in \AC([0,T],\R^n)$ is an absolutely continuous function (called \textit{state} or \textit{trajectory}) and~$u\in \L^\infty([0,T],\R^m)$ is an essentially bounded measurable function (called \textit{control}) such that~$\dot{x}(t) = f ( x(t),u(t),t )$ for a.e.\ $t \in [0,T]$. Throughout the paper we fix a starting point~$x^0\in\R^n$ and a nonempty subset~$\U$ of~$\R^m$ standing for the set of control constraints. We say that a target point~$x^1\in\R^n$ is~\emph{$\L^\infty_\U$-reachable in time~$T$ from~$x^0$} if there exists a solution~$(x,u)$ to~\eqref{contsys}, with~$u\in \L^\infty([0,T],\U)$, such that~$x(0)=x^0$ and~$x(T)=x^1$.

We now \textit{sample} the control~$u$ over the time interval~$[0,T]$: given a partition~$\T = \{ t_i \}_{i=0,\ldots,N}$ of~$[0,T]$, consisting of real numbers satisfying $ 0 = t_0 < t_1 < \ldots < t_{N-1} < t_N = T $, for some~$N \in \N^*$, we consider the set~$\PC^\T([0,T],\R^m)$ of all possible piecewise constant functions~$u:[0,T]\rightarrow \R^m$ satisfying~$u(t)=u_i$, for some~$u_i \in \R^m$, for every~$t\in[t_i,t_{i+1})$ and every~$i \in \{ 0,\ldots,N-1 \}$. We denote by~$ \Vert \T \Vert = \max_{i=0,\ldots,N-1} \vert t_{i+1} - t_i \vert $ the norm of the partition. We say that a target point~$x^1\in\R^n$ is~\emph{$\PC^\T_\U$-reachable in time~$T$ from~$x^0$} if there exists a solution~$(x,u)$ to~\eqref{contsys}, with~$u\in\PC^\T([0,T],\U)$, such that~$x(0)=x^0$ and~$x(T)=x^1$.

In this paper we investigate the following question: assuming that~a target point~$x^1\in\R^n$ is~$\L^\infty_\U$-reachable in time~$T$ from~$x^0$ and given a partition~$\T$ of~$[0,T]$, is the point~$x^1$ also~$\PC^\T_\U$-reachable in time~$T$ from~$x^0$? In other words, how robust is reachability in fixed time under control sampling? Without any specific assumption, even for small values of~$\Vert \T \Vert$, in general $x^1$ fails to be $\PC^\T_\U$-reachable in time $T$ from $x^0$, as shown in the following example.

\begin{example}\label{example1}
Take~$T=n=m=1$, $\U = \R$ and~$f(x,u,t) = 1+(u-t)^2$ for all~$(x,u,t) \in \R \times \R \times [0,T]$. The target point~$x^1 = 1$ is $\L^\infty_\U$-reachable in time~$T$ from the starting point~$x^0 = 0$ with the control~$u(t)=t$ for a.e.\ $t \in [0,T]$. However there is no other control steering the control system from~$x^0$ to~$x^1$ in time~$T$. Therefore, given any partition~$\T$ of~$[0,T]$, even with a small value of~$\Vert \T \Vert$, the target point~$x^1$ is not~$\PC^\T_\U$-reachable in time~$T$ from~$x^0$.
\end{example}

Our main result is the following.

\begin{theorem}\label{thmmain}
Assume that~$\U$ is convex and let~$x^1 \in \R^n$ be a target point that is~$\L^\infty_\U$-reachable in time~$T$ from $x^0$ with a control~$u \in \L^\infty([0,T],\U)$. If $u$ is weakly~$\U$-regular, then there exists a threshold~$\delta > 0$ such that~$x^1$ is~$\PC^\T_\U$-reachable in time~$T$ from~$x^0$ for any partition~$\T$ of~$[0,T]$ satisfying~$\Vert \T \Vert \leq \delta$.
\end{theorem}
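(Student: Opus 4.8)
The plan is to pass to the endpoint map, reduce reachability of $x^1$ to a finite-dimensional open-mapping statement, and show that this statement survives replacing controls by their piecewise-constant interval averages. For a control $v$ let $E(v)\in\R^n$ denote the value at $T$ of the solution of \eqref{contsys} with $x(0)=x^0$; recall that $E$ is of class $\C^1$ (in the Fréchet sense) on a neighbourhood in $\L^\infty$ of any control whose trajectory stays where $f$ is $\C^1$, with differential $dE_v$ given through the resolvent of the linearized system along $(x_v,v)$. I would fix the reference control $u$, its trajectory $x$, a compact subset of $\R^n$ containing the trajectories of all controls used below, and a closed ball of $\R^m$ containing their ranges; on the resulting compact product, $f,\partial_x f,\partial_u f$ are bounded and uniformly continuous, which is what makes the subsequent estimates uniform. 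Weak $\U$-regularity of $u$ enters here: by its definition (after a finite-dimensional argument) one obtains controls $\bar u_1,\dots,\bar u_\ell\in\L^\infty([0,T],\U)$ for which the vectors $v_k:=dE_u(\bar u_k-u)\in\R^n$ positively span $\R^n$, i.e. $0\in\Int\conv\{v_1,\dots,v_\ell\}$. On the simplex $\Lambda=\{\lambda\in\R_{\ge 0}^\ell:\sum_k\lambda_k\le1\}$ I set $u_\lambda=(1-\sum_k\lambda_k)u+\sum_k\lambda_k\bar u_k$, which lies in $\L^\infty([0,T],\U)$ by convexity of $\U$, and $\Psi(\lambda)=E(u_\lambda)$. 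Then $\Psi$ is $\C^1$, $\Psi(0)=x^1$, and $D\Psi(0)$ maps the tangent cone $\R_{\ge 0}^\ell$ of $\Lambda$ at $0$ onto a neighbourhood of $0$ (its values on the $e_k$ are the positively spanning $v_k$). A quantitative open-mapping argument at the vertex $0$ — provable by Brouwer's fixed point theorem, with an openness radius depending only on that positive-spanning margin, on a modulus of continuity of $D\Psi$, and on $\mathrm{diam}\,\Lambda$ — then yields $r>0$ with $B(x^1,r)\subseteq\Psi(\Lambda)$.

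For a partition $\T$, let $A_\T$ be the interval-averaging operator, $(A_\T\phi)(t)=\frac1{t_{i+1}-t_i}\int_{t_i}^{t_{i+1}}\phi$ on $[t_i,t_{i+1})$. By convexity of $\U$, $A_\T$ maps $\L^\infty([0,T],\U)$ into $\PC^\T([0,T],\U)$; it is linear, $1$-Lipschitz on $\L^1$, and $A_\T\phi\to\phi$ in $\L^1$ as $\Vert\T\Vert\to0$, hence $A_\T\to\mathrm{Id}$ uniformly on $\L^1$-compact sets. I set $\Psi^\T(\lambda)=E(A_\T u_\lambda)$; since $A_\T u_\lambda=(1-\sum_k\lambda_k)A_\T u+\sum_k\lambda_k A_\T\bar u_k$ is again a convex combination of $\PC^\T([0,T],\U)$-controls, $\Psi^\T$ has exactly the form of $\Psi$. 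The core claim is that $\Psi^\T\to\Psi$ in $\C^1(\Lambda,\R^n)$ as $\Vert\T\Vert\to0$: the set $\{u_\lambda:\lambda\in\Lambda\}$ is $\L^1$-compact, so $A_\T u_\lambda\to u_\lambda$ in $\L^1$ uniformly in $\lambda$; a Gronwall estimate exploiting uniform continuity of $f$ makes the trajectories, and hence $\Psi^\T$, converge uniformly; and continuous dependence of the linearized resolvents on their coefficients $\partial_x f,\partial_u f$ evaluated along those trajectories (which converge in $\L^1$ and stay bounded) gives $D\Psi^\T\to D\Psi$ uniformly on $\Lambda$. Moreover, since $f,\partial_x f,\partial_u f$ are uniformly continuous on the fixed compact and the averages $A_\T\bar u_k,A_\T u$ are bounded uniformly in $\T$, the maps $\lambda\mapsto D\Psi^\T(\lambda)$ are equicontinuous uniformly in $\T$; consequently, positive spanning being an open condition, the vectors $v_k^\T:=D\Psi^\T(0)e_k$ positively span $\R^n$ and the vertex open-mapping argument applies to $\Psi^\T$ with a common radius $r_0>0$, once $\Vert\T\Vert$ is small.

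To conclude, I fix $\delta>0$ so small that for every partition $\T$ with $\Vert\T\Vert\le\delta$ one has both $\Psi^\T(\Lambda)\supseteq B(\Psi^\T(0),r_0)$ and $|\Psi^\T(0)-x^1|<r_0$ (the latter because $\Psi^\T(0)=E(A_\T u)\to E(u)=x^1$). Then $x^1\in B(\Psi^\T(0),r_0)\subseteq\Psi^\T(\Lambda)$, so there is $\lambda^\T\in\Lambda$ with $E(A_\T u_{\lambda^\T})=x^1$, and $A_\T u_{\lambda^\T}\in\PC^\T([0,T],\U)$ steers $x^0$ to $x^1$ in time $T$, which is the assertion.

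I expect the main obstacle to be the $\C^1$-convergence $\Psi^\T\to\Psi$ of the second paragraph, made uniform in $\lambda$ and in $\T$: transporting the endpoints of sampled controls is routine, but one must also transport their differentials, and do so uniformly, since a single threshold $\delta$ must serve all partitions with $\Vert\T\Vert\le\delta$; this rests on combining the $\L^1$-approximate-identity property of interval averaging with Gronwall's lemma and with continuous dependence of linear resolvents, the uniformity flowing from the $\L^1$-compactness of $\{u_\lambda\}$ and the uniform continuity of $f,\partial_x f,\partial_u f$ on a fixed compact set. A secondary point, whose treatment depends on the exact form of the definition, is that weak $\U$-regularity must genuinely supply the positive-spanning/openness property at $u$ used in the first paragraph — it has to, for otherwise $x^1$ could be a corner of the reachable set, as in Example~\ref{example1}, and the statement would fail.
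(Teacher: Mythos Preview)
Your argument, as written, proves the theorem only under the stronger hypothesis of \emph{strong} $\U$-regularity, not weak $\U$-regularity. The crucial step is your claim that ``by its definition \dots\ one obtains controls $\bar u_1,\dots,\bar u_\ell\in\L^\infty([0,T],\U)$ for which the vectors $v_k:=\D\E(u)(\bar u_k-u)$ positively span $\R^n$.'' That is precisely the content of $\D\E(u)(\TT_{\L^\infty_\U}[u])=\R^n$, i.e.\ of strong $\U$-regularity (Definition~\ref{def_Ureg}). Weak $\U$-regularity (Definition~\ref{def_wUreg}) says instead that the \emph{Pontryagin cone} $\Pont_\U[u]$, generated by needle-like variation vectors $w^u_{(\tau,\omega)}(T)$, equals $\R^n$; this is strictly weaker. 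Example~\ref{example3} ($f(x,u,t)=u^3$, $\U=[-1,1]$, $u\equiv 0$) is decisive: there $\D\E(u)\cdot v=\int_0^T 3u^2 v\,dt=0$ for every $v$, so no family $\{\D\E(u)(\bar u_k-u)\}$ can span anything, yet $u$ is weakly $\U$-regular. Hence your first paragraph does not go through under the stated hypothesis, and the caveat you flag at the end as a ``secondary point'' is in fact the main obstruction.

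Under the assumption of strong $\U$-regularity, your scheme is close in spirit to the paper's first proof (Section~\ref{sec_firstproof}): both average the finitely many controls $u,\bar u_1,\dots,\bar u_\ell$ and recover surjectivity of the resulting finite-dimensional endpoint map. The paper applies the conic implicit function theorem to the \emph{truncated} end-point mapping viewed as a $\C^1$ map on $\L^s$, $1<s<\infty$, with $(\II^\T u,\II^\T\bar v)$ treated as parameters; you instead keep everything finite-dimensional and argue $\C^1$-convergence $\Psi^\T\to\Psi$ on the simplex. Your route avoids the truncation machinery but requires the uniform $\C^1$-convergence you sketch; this is obtainable since all controls involved stay in a fixed $\L^\infty$-ball and the parametrization is affine in $\lambda$, but it is not entirely routine.

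For the genuine weak $\U$-regularity case, the paper's second proof (Section~\ref{sec_secondproof}) replaces your convex combinations $u_\lambda$ by a \emph{multiple needle-like variation} $u^{\bar\alpha}_\chi$, so that the finite-dimensional map $\Psi(\bar\alpha)=\E(u^{\bar\alpha}_\chi)$ has $\D\Psi(0)\cdot\R_+^R=\R^n$ coming from $\Pont_\U[u]=\R^n$. One then uses the conic implicit function theorem to build a continuous local right inverse $z\mapsto V(z)=u^{\bar\alpha(z)}_\chi$, and applies Brouwer to the map $z\mapsto x_u(T)+z-\E(\II^\T(V(z)))$ on a closed ball, exploiting that $\II^\T$ is an $\L^1$-approximate identity uniformly on the compact image of $V$. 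The essential difference from your outline is that the perturbations producing the spanning vectors are $\L^1$-perturbations (needles), not $\L^\infty$-perturbations, which is exactly what is needed when $\D\E(u)$ itself may be degenerate on $\TT_{\L^\infty_\U}[u]$.
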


The key concept of \emph{weakly $\U$-regular control} is defined, commented and characterized in Section~\ref{sec_recap}, in relation with local reachability results. Theorem~\ref{thmmain} is discussed in detail in Section~\ref{sec_comments}. In particular we emphasize here that the convexity assumption made on~$\U$ and the $\C^1$ smoothness assumption made on~$f$ can both be relaxed (see Remarks~\ref{remequalityinequalityU}, \ref{remequalityinequalityU2} and~\ref{rem_relaxregularityonf} for details). All proofs are done in Sections~\ref{secproofs1} and~\ref{secproofs2}.

\section{Recap on local reachability results}\label{sec_recap}

This section gathers in a concise way a number of local reachability results, helpful for various purposes all along this paper. Most of these results are well known in the literature (see, e.g.,~\cite{agrachev2004, BonnardChyba, Coron_book, LeeMarkus,pontryagin1962,sontag1998,trelat2005} and references therein), while others are less known or even new. 

In Section~\ref{sec_regwithout} we deal with the unconstrained control case (i.e., when~$\U=\R^m$), recalling how the implicit function theorem can provide local reachability results thanks to the notion of \emph{strongly regular control}. In Section~\ref{sec_regwith} we show how to extend this approach under convex control constraints (i.e., when~$\U$ is a convex subset of~$\R^m$), thanks to the notion of \emph{strongly~$\U$-regular control} and to a \emph{conic} version of the implicit function theorem. In Section~\ref{sec_wregwith} we treat the general control constraints case (i.e., when~$\U$ is a general subset of~$\R^m$), thanks to the notion of \emph{weakly~$\U$-regular control} and using needle-like variations. These different notions lead to distinct results, that we comment further in Section~\ref{sec_further_relationships}.

We first recall some basic facts and terminology. A control~$u \in \L^\infty([0,T],\R^m)$ is said to be \emph{admissible} when there exists~$x \in \AC([0,T],\R^n)$, starting at~$x(0)=x^0$, such that~$(x,u)$ is a solution to~\eqref{contsys}. In that case the trajectory~$x$ is unique and will be denoted by~$x_u$. The set~$\UU$ of all admissible controls is an open subset of~$\L^\infty([0,T],\R^m)$ and the \emph{end-point mapping}~$\E : \UU \to \R^n$ is the~$\C^1$ mapping defined by~$\E(u) = x_u(T)$ for every~$u \in \UU$. 
Therefore a target point~$x^1\in\R^n$ is~$\L^\infty_\U$-reachable in time~$T$ from~$x^0$ if and only if~$x^1 $ belongs to the~$\L^\infty_\U$-\textit{accessible set} given by~$\E ( \UU \cap \L^\infty([0,T],\U) )$. Reachability in time~$T$ from~$x^0$ is thus related to a surjectivity property of~$\E$.

\subsection{Without control constraint}\label{sec_regwithout}
All results in this section are classical (see, e.g., \cite{agrachev2004,BonnardChyba, Coron_book,trelat2005}). When~$\U=\R^m$, i.e., when there is no control constraint, some conditions ensuring surjectivity of~$\E$ are well known. For instance, when the control system~\eqref{contsys} is linear and autonomous, i.e., $f(x,u,t)=Ax+Bu+g(t)$ for all~$(x,u,t) \in \R^n \times \R^m \times [0,T]$, where~$A \in \R^{n \times n}$ and~$B \in \R^{n \times m}$ are constant matrices and~$g \in \C([0,T],\R^n)$ is a continuous function, we have~$\UU = \L^\infty([0,T],\R^m)$, and~$\E$ is surjective if and only if the pair~$(A,B)$ satisfies the classical Kalman condition. For a general nonlinear control system~\eqref{contsys}, \emph{global} surjectivity of~$\E$ cannot be ensured in general. But, thanks to the implicit function theorem, \emph{local} surjectivity can be established (see Proposition~\ref{prop_reginterior} below, proved in Section~\ref{secproof_reginterior}).



\begin{definition}[strongly\footnote{With respect to the existing literature, we add the word ``strongly", in contrast to the notion of ``weakly" regular control defined in Section~\ref{sec_wregwith}.} regular control]\label{def_reg}
A control~$u \in \UU$ is said to be \emph{strongly regular} if the Fr\'echet differential~$\D\E(u):\L^\infty([0,T],\R^m)\rightarrow\R^n$ is surjective, i.e.,~$\mathrm{Ran}(\D\E(u))=\R^n$. A control~$u \in \UU$ is said to be \emph{weakly singular} if it is not strongly regular, i.e., $\mathrm{Ran}(\D\E(u))$ is a proper subspace of~$\R^n$. 
\end{definition}

\begin{proposition}\label{prop_reginterior}
If a control~$u \in \UU$ is strongly regular, then there exist an open neighborhood~$\VV$ of~$x_u(T)$ and a~mapping~$V : \VV \to \UU$ of class $\C^1$ satisfying~$V(x_u(T)) = u$ and~$\E ( V(z) ) = z$ for every~$z \in \VV$. In particular, any point of~$\VV$ is $\L^\infty_{\R^m}$-reachable in time~$T$ from~$x^0$, and thus~$x_u(T)$ belongs to the interior of the~$\L^\infty_{\R^m}$-accessible set.
\end{proposition}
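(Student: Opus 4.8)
The plan is to prove Proposition~\ref{prop_reginterior} as a direct application of the classical implicit function theorem to the end-point mapping $\E$. First I would recall that $\E : \UU \to \R^n$ is of class $\C^1$ on the open set $\UU \subseteq \L^\infty([0,T],\R^m)$, and that by hypothesis the Fr\'echet differential $\D\E(u) : \L^\infty([0,T],\R^m) \to \R^n$ is surjective. Since $\R^n$ is finite-dimensional, surjectivity of $\D\E(u)$ means one can pick a finite-dimensional subspace of the (infinite-dimensional) source space on which $\D\E(u)$ restricts to a linear isomorphism onto $\R^n$: concretely, choose controls $v_1,\dots,v_n \in \L^\infty([0,T],\R^m)$ such that the vectors $\D\E(u)\cdot v_1,\dots,\D\E(u)\cdot v_n$ form a basis of $\R^n$.

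Next I would introduce the finite-dimensional auxiliary map $F : W \to \R^n$, defined on a neighborhood $W$ of $0$ in $\R^n$ by $F(\lambda) = F(\lambda_1,\dots,\lambda_n) = \E\bigl(u + \sum_{k=1}^n \lambda_k v_k\bigr)$, which is well defined and $\C^1$ for $\lambda$ close enough to $0$ because $\UU$ is open and $\E$ is $\C^1$. Its differential at $0$ is the linear map $\lambda \mapsto \sum_{k=1}^n \lambda_k\, \D\E(u)\cdot v_k$, which by construction is invertible. The classical (finite-dimensional) inverse function theorem then yields an open neighborhood $\VV$ of $F(0) = \E(u) = x_u(T)$ and a $\C^1$ local inverse $G : \VV \to W$ with $G(x_u(T)) = 0$ and $F(G(z)) = z$ for all $z \in \VV$. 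I would then set $V : \VV \to \UU$, $V(z) = u + \sum_{k=1}^n G_k(z)\, v_k$, which is manifestly $\C^1$, satisfies $V(x_u(T)) = u$, and obeys $\E(V(z)) = F(G(z)) = z$ for every $z \in \VV$ (shrinking $\VV$ if needed to keep the image inside $\UU$). The remaining assertions are then immediate: for each $z \in \VV$, the pair $(x_{V(z)}, V(z))$ is a solution to \eqref{contsys} starting at $x^0$ with $x_{V(z)}(T) = \E(V(z)) = z$, so $z$ is $\L^\infty_{\R^m}$-reachable in time $T$ from $x^0$; hence $\VV$ lies in the $\L^\infty_{\R^m}$-accessible set, and since $\VV$ is open and contains $x_u(T)$, the point $x_u(T)$ is interior to that set.

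I do not expect any serious obstacle here: the only mild subtlety is the passage from a surjective differential on an infinite-dimensional Banach space to an invertible differential on a finite-dimensional slice, which is handled cleanly by the basis choice above, so that only the elementary inverse function theorem on $\R^n$ is needed rather than its Banach-space version. One should be slightly careful to shrink the neighborhoods so that $u + \sum_k \lambda_k v_k$ stays in $\UU$ throughout and so that $V(\VV) \subseteq \UU$, but these are routine open-set arguments. The $\C^1$ regularity of $V$ is inherited from that of $G$ and the linearity of $\lambda \mapsto u + \sum_k \lambda_k v_k$.
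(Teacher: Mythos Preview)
Your proposal is correct and follows essentially the same approach as the paper: both reduce to a finite-dimensional slice by choosing $v_1,\dots,v_n$ with $\D\E(u)\cdot v_j$ spanning $\R^n$, and then invert the resulting $\C^1$ map $\lambda\mapsto\E(u+\sum_j\lambda_j v_j)$ near $0$. The only cosmetic difference is that the paper normalizes $\D\E(u)\cdot v_j=e_j$ and phrases the inversion via the implicit function theorem applied to $\Phi(z,\bar\alpha)=\E(u+\sum_j\alpha_j v_j)-z$, whereas you invoke the inverse function theorem directly on $F$; these are equivalent.
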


A Hamiltonian characterization of weakly singular controls (recalled in Proposition~\ref{prop_charactsing} further) can be derived from the expression of the Fr\'echet differential of~$\E$ given by
\begin{equation}\label{eq1}
 \D \E (u) \cdot v =  w^u_v (T)
\end{equation}
for every~$u \in \UU$ and every~$v \in \L^\infty([0,T],\R^m)$, where~$w^u_v \in \AC([0,T],\R^n)$ is the unique solution to
\begin{equation*}
\left\lbrace
\begin{array}{l}
\dot{w}(t) =  \nabla_x f(x_u(t),u(t),t) w(t) \\
\qquad\qquad\qquad + \nabla_u f(x_u(t),u(t),t) v(t), \quad \text{a.e.\ } t \in [0,T] , \\
w(0) =  0_{\R^n} .
\end{array}
\right.
\end{equation*} 
The \emph{Hamiltonian} associated to~\eqref{contsys} is the function~$H : \R^n \times \R^m \times \R^n \times [0,T] \to \R$ defined by
$$ 
H(x,u,p,t) = \langle p,f(x,u,t) \rangle_{\R^n} 
$$
for all~$(x,u,p,t) \in \R^n \times \R^m \times \R^n \times [0,T]$, where~$\langle \cdot , \cdot \rangle_{\R^n}$ is the Euclidean scalar product in~$\R^n$. 

\begin{definition}[weak extremal lift]\label{def_weakextremallift}
A \emph{weak extremal lift} of a pair~$(x_u,u)$, where~$u \in \UU$, is a triple~$(x_u,u,p)$ where~$p \in \AC( [0,T] , \R^n )$ (called adjoint vector) is a solution to the (linear) \emph{adjoint equation}
\begin{equation}\tag{AE}\label{eqAE}
\dot{p}(t) = - \nabla_x H(x_u(t),u(t),p(t),t)  
\end{equation}
for a.e.\ $t \in [0,T]$, satisfying the \emph{null Hamiltonian gradient condition}
\begin{equation}\tag{NHG}\label{eqNHG}
\nabla_u H(x_u(t),u(t),p(t),t) =0_{\R^m} 
\end{equation}
for a.e.\ $t \in [0,T]$. The weak extremal lift~$(x_u,u,p)$ is said to be \emph{nontrivial} if~$p$ is nontrivial.
\end{definition}


\begin{proposition}\label{prop_charactsing}
A control~$u \in \UU$ is weakly singular if and only if the pair~$(x_u,u)$ admits a nontrivial weak extremal lift.
\end{proposition}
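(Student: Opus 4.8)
The plan is to characterize non-surjectivity of $\D\E(u)$ by duality and then translate the resulting orthogonality condition into the adjoint equation~\eqref{eqAE} together with the null Hamiltonian gradient condition~\eqref{eqNHG}. Recall that $u$ is weakly singular if and only if $\mathrm{Ran}(\D\E(u))$ is a proper subspace of~$\R^n$, which, since $\R^n$ is finite-dimensional, holds if and only if there exists a nonzero vector $p_1 \in \R^n$ such that $\langle p_1, \D\E(u)\cdot v \rangle_{\R^n} = 0$ for every $v \in \L^\infty([0,T],\R^m)$. Using the expression~\eqref{eq1}, this reads $\langle p_1, w^u_v(T) \rangle_{\R^n} = 0$ for every such~$v$.

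The heart of the argument is the following computation. Given any $p_1 \in \R^n$, let $p \in \AC([0,T],\R^n)$ be the unique solution of the linear adjoint equation~\eqref{eqAE} with terminal condition $p(T) = p_1$. Differentiating $t \mapsto \langle p(t), w^u_v(t) \rangle_{\R^n}$ and invoking both differential equations, together with the identities $\nabla_x H(x_u(t),u(t),p(t),t) = \nabla_x f(x_u(t),u(t),t)^\top p(t)$ and $\nabla_u H(x_u(t),u(t),p(t),t) = \nabla_u f(x_u(t),u(t),t)^\top p(t)$ stemming from $H = \langle p,f \rangle_{\R^n}$, the terms involving $\nabla_x f$ cancel and one is left with $\frac{\d}{\d t}\langle p(t), w^u_v(t) \rangle_{\R^n} = \langle \nabla_u H(x_u(t),u(t),p(t),t), v(t) \rangle_{\R^m}$ for a.e.\ $t \in [0,T]$. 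Integrating over $[0,T]$ and using $w^u_v(0) = 0_{\R^n}$ yields the fundamental identity
\[
\langle p_1, \D\E(u)\cdot v \rangle_{\R^n} = \int_0^T \langle \nabla_u H(x_u(t),u(t),p(t),t), v(t) \rangle_{\R^m}\, \d t
\]
valid for every $v \in \L^\infty([0,T],\R^m)$.

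Both implications then follow. For the "if" direction, a nontrivial weak extremal lift $(x_u,u,p)$ satisfies~\eqref{eqNHG}, so the right-hand side above vanishes for every~$v$; moreover $p(T) \neq 0_{\R^n}$, for otherwise $p$ would be identically zero by uniqueness of solutions to the linear equation~\eqref{eqAE}, contradicting nontriviality. Hence $p(T)$ is a nonzero vector orthogonal to $\mathrm{Ran}(\D\E(u))$, so $u$ is weakly singular. Conversely, if $u$ is weakly singular, choose $p_1 \neq 0_{\R^n}$ orthogonal to $\mathrm{Ran}(\D\E(u))$ and let $p$ solve~\eqref{eqAE} with $p(T) = p_1$; the fundamental identity then gives $\int_0^T \langle \nabla_u H(x_u(t),u(t),p(t),t), v(t) \rangle_{\R^m}\, \d t = 0$ for every $v \in \L^\infty([0,T],\R^m)$. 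Since $t \mapsto \nabla_u H(x_u(t),u(t),p(t),t)$ is measurable and essentially bounded, testing against $v = \mathbbm{1}_A e_j$ for arbitrary measurable $A \subseteq [0,T]$ and canonical basis vectors $e_j$ forces this map to vanish for a.e.\ $t \in [0,T]$, which is precisely~\eqref{eqNHG}; thus $(x_u,u,p)$ is a nontrivial weak extremal lift.

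I do not expect a serious obstacle here: the only points requiring some care are the bookkeeping of transposes in the identification of $\nabla_x H$ and $\nabla_u H$, and the verification that every product arising in the differentiation of $t \mapsto \langle p(t), w^u_v(t) \rangle_{\R^n}$ is integrable so that the fundamental theorem of calculus applies. Both are routine consequences of the $\C^1$ regularity of~$f$, the continuity of $x_u$ and $p$, and the essential boundedness of $u$ and~$v$.
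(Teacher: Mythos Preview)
Your proof is correct and follows essentially the same approach as the paper: both derive the fundamental identity $\langle p(T), \D\E(u)\cdot v\rangle_{\R^n} = \int_0^T \langle \nabla_u H, v\rangle_{\R^m}\,\d t$ by differentiating $t\mapsto\langle p(t), w^u_v(t)\rangle_{\R^n}$ along the adjoint equation, and then use it to translate between orthogonality to $\mathrm{Ran}(\D\E(u))$ and the condition~\eqref{eqNHG}. The only cosmetic difference is that the paper packages this identity as a separate lemma and proves the converse implication by contradiction, whereas you argue both directions directly.
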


While the definition of weakly singular control is quite abstract, the above classical Hamiltonian characterization (proved in Section~\ref{secproof_charactsing}) is practical. For example one can easily prove that the control in Example~\ref{example1} is weakly singular.

As a consequence of Propositions~\ref{prop_reginterior} and~\ref{prop_charactsing}, if a pair~$(x_u,u)$, for some~$u \in \UU$, has no nontrivial weak extremal lift, then any point of an open neighborhood of~$x_u(T)$ is~$\L^\infty_{\R^m}$-reachable in time~$T$ from~$x^0$. Note that the contrapositive statement corresponds to a weak version of the geometric Pontryagin maximum principle: if~$x_u(T)$, for some~$u \in \UU$, belongs to the boundary of the~$\L^\infty_{\R^m}$-accessible set, then the pair~$(x_u,u)$ admits a nontrivial weak extremal lift.

\subsection{With convex control constraints}\label{sec_regwith}
When~$\U$ is a proper subset of~$\R^m$, i.e., when there are control constraints, reachability properties are more difficult to establish in general. We refer the reader to~\cite{Brammer} for conic-type conditions for autonomous linear control systems, to~\cite{BoscainPiccoli, KrenerSchattler} for single-input control-affine systems in dimensions~$2$ and~$3$, and to~\cite{BianWebb,SussmannJurdjevic} for more general systems. 


In Section~\ref{sec_regwithout} we have recalled that, in the absence of control constraint, local reachability can be ensured thanks to the classical implicit function theorem, by assuming that~$\D\E(u)$ is surjective for some~$u \in \UU$. When there are control constraints, a powerful approach is to use \emph{constrained versions} of the implicit function theorem (as in \cite{antoine1990, BianWebb, Klamka}). When~$\U$ is convex, the required hypothesis for a control~$u \in \UU \cap \L^\infty([0,T],\U)$ is a \textit{conic} surjectivity assumption made on~$\D \E(u)$ as follows.

\begin{definition}[strongly $\U$-regular control]\label{def_Ureg}
Assume that~$\U$ is convex. A control~$u \in \UU \cap \L^\infty([0,T],\U)$ is said to be~\emph{strongly $\U$-regular} if~$\D \E(u)(\TT_{\L^\infty_\U}[u]) = \R^n$, where~$\TT_{\L^\infty_\U}[u]$ is the (convex) \emph{tangent cone} to $\L^\infty([0,T],\U)$ at $u$ defined by
\begin{equation*}
\TT_{\L^\infty_\U}[u] = \R_+ ( \L^\infty([0,T],\U) - u ) . 
\end{equation*}
The control~$u$ is said to be \emph{weakly $\U$-singular} when it is not strongly $\U$-regular, i.e., $\D \E(u)(\TT_{\L^\infty_\U}[u])$ is a proper convex subcone of~$\R^n$.
\end{definition}

\begin{proposition}\label{prop_Ureginterior}
Assume that~$\U$ is convex. If a control~$u \in \UU \cap \L^\infty([0,T],\U)$ is strongly $\U$-regular, then there exist an open neighborhood~$\VV $ of~$x_u(T)$ and a continuous mapping~$V : \VV \to \UU \cap \L^\infty([0,T],\U)$ satisfying~$V(x_u(T)) = u$ and~$\E ( V(z) ) = z$ for every~$z \in \VV$. In particular, any point in~$\VV$ is $\L^\infty_{\U}$-reachable in time~$T$ from~$x^0$, and thus~$x_u(T)$ belongs to the interior of the~$\L^\infty_{\U}$-accessible set.
\end{proposition}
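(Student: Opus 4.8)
The plan is to apply a \emph{conic} version of the implicit function theorem to the $\C^1$ end-point mapping $\E$ restricted to the convex set $K := \L^\infty([0,T],\U)$, at the point $u \in \UU \cap K$. The data fit this framework: $K$ is a convex subset of the Banach space $\L^\infty([0,T],\R^m)$; the set $\UU$ is an open neighbourhood of $u$ on which $\E$ is of class $\C^1$, with $\D\E(u)$ given by~\eqref{eq1}; and, by the very definition of strong $\U$-regularity (Definition~\ref{def_Ureg}), the bounded linear operator $\D\E(u)$ maps the convex tangent cone $\TT_{\L^\infty_\U}[u] = \R_+(K-u)$ onto $\R^n$. This conic surjectivity is exactly the hypothesis required by the constrained (conic) inverse function theorems used in this context (see, e.g., \cite{antoine1990, BianWebb, Klamka}), whose conclusion is the existence of an open neighbourhood $\VV$ of $\E(u) = x_u(T)$ and of a continuous map $V : \VV \to K$ with $V(x_u(T)) = u$ and $\E(V(z)) = z$ for every $z \in \VV$. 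Granting this, the proposition follows at once: every $z \in \VV$ equals $\E(V(z))$, i.e. is the end point of the trajectory associated with the admissible control $V(z) \in \UU \cap \L^\infty([0,T],\U)$ starting at $x^0$, hence is $\L^\infty_\U$-reachable in time $T$ from $x^0$; and since $\VV$ is open, $x_u(T)$ lies in the interior of the $\L^\infty_\U$-accessible set.

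It then remains to justify the conic inverse function theorem in our situation, and here is the route I would take — it also makes transparent where the convexity of $\U$ enters. First I would reduce to a finite-dimensional problem. Since $\D\E(u)(\TT_{\L^\infty_\U}[u]) = \R^n$, for each $i \in \{1,\ldots,n\}$ I can pick $v_i^{\pm} \in \TT_{\L^\infty_\U}[u]$ with $\D\E(u)\, v_i^{\pm} = \pm e_i$; because $\TT_{\L^\infty_\U}[u] = \R_+(K-u)$ and $\D\E(u)\, v_i^{\pm} \neq 0_{\R^n}$, after rescaling I may assume $v_i^{\pm} = w_i^{\pm} - u$ for some $w_i^{\pm} \in K$. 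Relabel the $k := 2n$ directions thus obtained as $v_1,\ldots,v_k$, with associated points $w_1,\ldots,w_k \in K$; then $\D\E(u)v_1,\ldots,\D\E(u)v_k$ positively span $\R^n$ — being positive multiples of $\pm e_1,\ldots,\pm e_n$ — equivalently $0_{\R^n}$ lies in the interior of $\conv\{\D\E(u)v_1,\ldots,\D\E(u)v_k\}$. The crucial observation is that for $\lambda$ in the simplex $D_\rho := \{\lambda \in \R^k : \lambda_j \geq 0,\ \sum_j \lambda_j \leq \rho\}$ with $\rho \leq 1$, the perturbed control
$$ u_\lambda := u + \sum_{j=1}^{k} \lambda_j v_j = \Big( 1 - \sum_{j=1}^{k}\lambda_j \Big) u + \sum_{j=1}^{k} \lambda_j w_j $$
is a convex combination of $u, w_1, \ldots, w_k \in K$, and hence belongs to $K$ \emph{precisely because $K$ is convex}; moreover $u_\lambda \in \UU$ whenever $\rho$ is small enough, since $\UU$ is open and $\Vert u_\lambda - u \Vert_{\L^\infty} \leq \rho \max_j \Vert v_j \Vert_{\L^\infty}$. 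Consequently $G : D_\rho \to \R^n$, $G(\lambda) := \E(u_\lambda)$, is a well-defined map of class $\C^1$ with $G(0) = x_u(T)$, whose differential at $0$ is $(\mu_j) \mapsto \sum_j \mu_j \D\E(u) v_j$ and therefore maps the tangent cone $\R_+^k$ of $D_\rho$ at $0$ onto $\R^n$. Everything is now reduced to the finite-dimensional statement: such a $G$ admits a continuous local right inverse $\phi$, defined on a neighbourhood $\VV$ of $x_u(T)$ and normalised by $\phi(x_u(T)) = 0$; then $V(z) := u_{\phi(z)} = u + \sum_j \phi_j(z) v_j$ is continuous, takes values in $\UU \cap \L^\infty([0,T],\U)$, satisfies $V(x_u(T)) = u$, and $\E(V(z)) = G(\phi(z)) = z$, which closes the argument.

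The step I expect to be the main obstacle is this final finite-dimensional conic inverse function theorem. Contrary to Proposition~\ref{prop_reginterior}, the positivity constraints $\lambda_j \geq 0$ rule out any direct linearisation: $\D G(0)$ has no two-sided bounded inverse, only a conic right inverse, and the right inverse $\phi$ one produces is in general merely continuous rather than of class $\C^1$. I would argue topologically: near $0$, the map $G$ is a small $\C^1$ perturbation of its linearisation $\mu \mapsto x_u(T) + \sum_j \mu_j \D\E(u) v_j$, which, by the choice of the $v_j$ above, is open at $0$ relative to $\R_+^k$ (precisely because $0_{\R^n}$ is interior to $\conv\{\D\E(u)v_j\}$); a Brouwer fixed-point / degree argument then shows that $G(D_\rho)$ contains a neighbourhood $\VV$ of $x_u(T)$, and a continuous selection argument — a continuous version of the Lyusternik–Graves theorem applied to the solution multimap $z \mapsto \{\lambda \in D_\rho : G(\lambda) = z\}$ — upgrades this into a continuous right inverse $\phi$ normalised so that $\phi(x_u(T)) = 0$. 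The remaining ingredients, namely the $\C^1$ regularity of $\E$ together with formula~\eqref{eq1} for its differential and the bookkeeping with convex combinations, are routine.
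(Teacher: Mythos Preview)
Your proof is correct and follows essentially the same approach as the paper: pick $2n$ directions $v_j\in\TT_{\L^\infty_\U}[u]$ hitting $\pm e_j$ under $\D\E(u)$, use convexity of $\U$ to ensure that the finite-dimensional perturbations $u+\sum_j\alpha_j v_j$ stay in $\UU\cap\L^\infty([0,T],\U)$ for small nonnegative $\alpha_j$, and then invoke a conic implicit function theorem to produce the continuous local right inverse. The only cosmetic differences are that the paper works on the box $[0,\beta]^{2n}$ (via Lemma~\ref{lemconetangent}) rather than your simplex $D_\rho$, and simply cites \cite[Theorem~1]{antoine1990} for the conic implicit function step instead of sketching its proof.
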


The proof of Proposition~\ref{prop_Ureginterior}, based on the conic implicit function theorem~\cite[Theorem~1]{antoine1990}, is provided in Section~\ref{secproof_Ureginterior}. 

Similar results to Proposition~\ref{prop_Ureginterior} are known in the literature. For example it echoes results obtained in~\cite{BianWebb,Klamka} in which the sufficient condition is settled as a constrained controllability property of the linearized control system. Such a condition is however not easy to check in practice.

As in the unconstrained control case (Section~\ref{sec_regwithout}), we next provide a practical Hamiltonian characterization of weakly $\U$-singular controls.

\begin{definition}[weak $\U$-extremal lift]\label{def_Uextremallift}
Assume that~$\U$ is convex. A~\emph{weak $\U$-extremal lift} of a pair~$(x_u,u)$, where~$u \in \UU \cap \L^\infty([0,T],\U)$, is a triple~$(x_u,u,p)$ where~$p \in \AC([0,T],\R^n)$ (called adjoint vector) is a solution to the adjoint equation~\eqref{eqAE} satisfying the \emph{Hamiltonian gradient condition}
\begin{equation}\tag{HG}\label{eqHG}
\nabla_u H(x_u(t),u(t),p(t),t) \in \NN_\U [u(t)] 
\end{equation}
for a.e.\ $t \in [0,T]$, where 
$$ 
\NN_\U[u(t)] = \{ \vartheta \in \R^m \mid \forall \omega \in \U, \; \langle \vartheta , \omega - u(t) \rangle_{\R^m} \leq 0 \}
$$
is the \emph{normal cone} to~$\U$ at~$u(t)$. The weak $\U$-extremal lift~$(x_u,u,p)$ is said to be \emph{nontrivial} if~$p$ is nontrivial.
\end{definition}

\begin{proposition}\label{prop_charactUsing}
Assume that~$\U$ is convex. A control~$u \in \UU \cap \L^\infty([0,T],\U)$ is weakly $\U$-singular if and only if the pair~$(x_u,u)$ admits a nontrivial weak $\U$-extremal lift.
\end{proposition}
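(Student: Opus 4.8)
Our proof closely parallels the proof of the unconstrained characterization (Proposition~\ref{prop_charactsing}), replacing the subspace/orthogonal-complement dichotomy by a convex-cone/polar-cone one and the null Hamiltonian gradient condition~\eqref{eqNHG} by the normal-cone membership~\eqref{eqHG}; the whole proof rests on a single integral identity. \textbf{Step~1 (key identity).} Let $R : [0,T]\to\R^{n\times n}$ be the resolvent of the linearized state equation, i.e.\ the solution of $\dot R(t) = \nabla_x f(x_u(t),u(t),t)\,R(t)$ with $R(0)=I_n$ (it is invertible for every $t$). Applying the variation-of-constants formula to the Cauchy problem defining $w^u_v$ and using~\eqref{eq1}, we get, for every $v\in\L^\infty([0,T],\R^m)$,
\[
\D\E(u)\cdot v \;=\; \int_0^T R(T)R(s)^{-1}\,\nabla_u f(x_u(s),u(s),s)\,v(s)\,\d s .
\]
For $\psi\in\R^n$, let $p_\psi\in\AC([0,T],\R^n)$ be the unique solution of~\eqref{eqAE} with $p_\psi(T)=\psi$; since~\eqref{eqAE} is linear, $t\mapsto R(t)^\top p_\psi(t)$ is constant, hence $p_\psi(s) = (R(s)^{-1})^\top R(T)^\top\psi$, and by backward uniqueness $p_\psi$ is nontrivial if and only if $\psi\neq 0_{\R^n}$ (while every solution of~\eqref{eqAE} is of the form $p_{p(T)}$). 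Pairing the display with $\psi$, transposing, and using $\nabla_u H(x,u,p,t)=\nabla_u f(x,u,t)^\top p$, we obtain
\begin{equation}\tag{$\star$}
\langle\psi,\D\E(u)\cdot v\rangle_{\R^n} \;=\; \int_0^T \langle\nabla_u H(x_u(s),u(s),p_\psi(s),s),\,v(s)\rangle_{\R^m}\,\d s
\end{equation}
for every $\psi\in\R^n$ and every $v\in\L^\infty([0,T],\R^m)$.

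\textbf{Step~2 (reduction to an integral inequality).} The set $K:=\D\E(u)(\TT_{\L^\infty_\U}[u])$ is a convex cone of~$\R^n$, since $\TT_{\L^\infty_\U}[u]$ is a convex cone (this uses convexity of~$\U$) and $\D\E(u)$ is linear. If $K\neq\R^n$ then also $\overline{K}\neq\R^n$ (a convex subset of~$\R^n$ whose closure is~$\R^n$ equals~$\R^n$), so the closed convex cone $\overline K$ has a nonzero vector $\psi$ in its polar cone, that is, $\langle\psi,k\rangle_{\R^n}\leq 0$ for all $k\in K$; the converse implication is obvious. Because $\TT_{\L^\infty_\U}[u]=\R_+(\L^\infty([0,T],\U)-u)$, positive homogeneity turns the latter into ``$\langle\psi,\D\E(u)\cdot(\omega-u)\rangle_{\R^n}\leq 0$ for all $\omega\in\L^\infty([0,T],\U)$''. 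By~$(\star)$, $u$ is therefore weakly $\U$-singular if and only if there exists $\psi\neq 0_{\R^n}$ such that
\[
\int_0^T \langle\nabla_u H(x_u(s),u(s),p_\psi(s),s),\,\omega(s)-u(s)\rangle_{\R^m}\,\d s \;\leq\; 0 \qquad\text{for all }\omega\in\L^\infty([0,T],\U).
\]

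\textbf{Step~3 (from the integral inequality to~\eqref{eqHG}).} Writing $g:=\nabla_u H(x_u(\cdot),u(\cdot),p_\psi(\cdot),\cdot)$, which is measurable and essentially bounded, it remains to show that ``$\int_0^T\langle g(s),\omega(s)-u(s)\rangle_{\R^m}\,\d s\leq 0$ for all $\omega\in\L^\infty([0,T],\U)$'' is equivalent to ``$g(s)\in\NN_\U[u(s)]$ for a.e.\ $s\in[0,T]$'', which is~\eqref{eqHG}. The implication ``$\Leftarrow$'' is immediate. For ``$\Rightarrow$'', fix a countable dense subset $\{\omega_k\}_{k\in\N}$ of~$\U$ (possible since $\U\subset\R^m$ is separable); for each $k$ and each measurable $A\subset[0,T]$, the control $\omega_k\mathbf{1}_A+u\,\mathbf{1}_{[0,T]\setminus A}$ lies in $\L^\infty([0,T],\U)$ and gives $\int_A\langle g(s),\omega_k-u(s)\rangle_{\R^m}\,\d s\leq 0$; as $A$ is arbitrary, $\langle g(s),\omega_k-u(s)\rangle_{\R^m}\leq 0$ for a.e.\ $s$. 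Intersecting the corresponding full-measure sets over $k\in\N$ and invoking density of $\{\omega_k\}$ and continuity of the scalar product yields $\langle g(s),\omega-u(s)\rangle_{\R^m}\leq 0$ for every $\omega\in\U$ and a.e.\ $s$, i.e.\ $g(s)\in\NN_\U[u(s)]$ a.e. Combining Steps~2 and~3, $u$ is weakly $\U$-singular if and only if there exists $\psi\neq 0_{\R^n}$ such that $(x_u,u,p_\psi)$ solves~\eqref{eqAE} and satisfies~\eqref{eqHG}, i.e.\ if and only if $(x_u,u)$ admits a nontrivial weak $\U$-extremal lift.

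The main obstacle is Step~3: Steps~1 and~2 are routine (variation of constants, a transposition, and one elementary convex-analysis fact), whereas the passage from a condition tested against \emph{all} $\L^\infty([0,T],\U)$-valued controls to a \emph{pointwise a.e.}\ membership in the normal cone $\NN_\U[u(s)]$ is the substantive point; it relies on the separability of~$\U$ and on the bang-type variations $\omega_k\mathbf{1}_A+u\,\mathbf{1}_{[0,T]\setminus A}$, and one should note that $\U$ need not be closed --- harmless, since $\NN_\U[u(s)]$ depends only on~$\U$ and $u(s)$, and density suffices.
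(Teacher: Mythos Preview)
Your proof is correct and follows the same overall architecture as the paper's: both establish the integral identity $\langle p(T),\D\E(u)\cdot v\rangle_{\R^n}=\int_0^T\langle\nabla_uH,v\rangle_{\R^m}\,\d s$ (your Step~1 is the resolvent form of the paper's function $h_v(t)=\langle p(t),w^u_v(t)\rangle_{\R^n}$ in Lemma~\ref{lemcharactweakUextremal}), and both reduce weak $\U$-singularity to the existence of a nonzero separating vector for the convex cone $\D\E(u)(\TT_{\L^\infty_\U}[u])$.

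The genuine methodological difference lies in your Step~3, the passage from the global integral inequality to the pointwise normal-cone condition~\eqref{eqHG}. The paper handles this via \emph{needle-like variations}: for a Lebesgue point $\tau$ and $\omega\in\U$, it plugs in $v=u^\alpha_{(\tau,\omega)}$, divides by $\alpha$, and lets $\alpha\to0^+$. You instead test against controls $\omega_k\mathbf{1}_A+u\,\mathbf{1}_{[0,T]\setminus A}$ over all measurable $A$, deduce the pointwise inequality for each $\omega_k$ from arbitrariness of $A$, and then close by density of a countable subset of~$\U$. Your argument is slightly more self-contained here (no Lebesgue-point machinery) but requires the separability step; the paper's needle-variation argument avoids any density argument and, more importantly, rehearses exactly the perturbation mechanism that is reused throughout Section~\ref{sec_wregwith} and in the proof of Proposition~\ref{prop_charactsUsing}, so it is better integrated with the surrounding material.
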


The proof of Proposition~\ref{prop_charactUsing}, using in particular needle-like variations (recalled in Section~\ref{sec_wregwith}), is done in Section~\ref{secproof_charactUsing}. 

As a consequence of Propositions~\ref{prop_Ureginterior} and~\ref{prop_charactUsing}, when~$\U$ is convex, if a pair~$(x_u,u)$, for some~$u \in \UU \cap \L^\infty([0,T],\U)$, has no nontrivial weak~$\U$-extremal lift, then any point of an open neighborhood of~$x_u(T)$ is~$\L^\infty_{\U}$-reachable in time~$T$ from~$x^0$. The contrapositive statement corresponds to a weak version of the geometric Pontryagin maximum principle in the presence of convex control constraints: when~$\U$ is convex, if the point~$x_u(T)$, for some~$u \in \UU \cap \L^\infty([0,T],\U)$, belongs to the boundary of the~$\L^\infty_\U$-accessible set, then the pair~$(x_u,u)$ admits a nontrivial weak $\U$-extremal lift.

\begin{remark}\label{rem1}
When~$\U$ is convex, it is clear that, if a control~$u \in \UU \cap \L^\infty([0,T],\U)$ is strongly~$\U$-regular, then it is strongly regular. The converse is not true in general, as shown in the next example, but is true when~$u$ takes its values in the interior of~$\U$ (see Proposition~\ref{propint} further), in particular when~$\U=\R^m$.
\end{remark}

\begin{example}\label{example2}
Take~$T=n=m=1$, $\U = [-1,1]$ and~$f(x,u,t) = u$ for all~$(x,u,t) \in \R \times \R \times [0,T]$. From the Hamiltonian characterizations, the constant control~$u\equiv 1$ is strongly regular and weakly $\U$-singular.
\end{example}

\begin{remark}
Note that the conclusions of Propositions~\ref{prop_reginterior} and~\ref{prop_Ureginterior} are distinct: the local right-inverse mapping~$V$ is of class $\C^1$ in Proposition \ref{prop_reginterior} (in the unconstrained control case), while it is (only) continuous in Proposition \ref{prop_Ureginterior} (in the convex control constraints case). In the latter, obtaining $\C^1$ smoothness is an open question. Indeed, in all references on \emph{constrained} implicit function theorems we found (such as~\cite{antoine1990}), the continuity of the local right-inverse mapping is established, but obtaining~$\C^1$ smoothness does not seem to be an easy issue.
\end{remark}

\subsection{With general control constraints}\label{sec_wregwith}

When~$\U$ is convex and for a given control~$u \in \UU \cap \L^\infty([0,T],\U)$, the set~$\D \E ( u ) ( \TT_{\L^\infty_\U}[u])$ consists of all elements~$w^u_v(T)$ (called the \textit{weak $\U$-variation vectors} associated with~$u$) generated by conic~$\L^\infty$-perturbations~$u + \alpha v$ of the control~$u$, where~$v \in \TT_{\L^\infty_\U}[u]$ and~$\alpha \geq 0$, in the sense that
\begin{equation*}
w^u_v (T) = \lim\limits_{\alpha \to 0^+} \dfrac{\E (u+\alpha v)-\E(u)}{\alpha} = \D \E (u) \cdot v.
\end{equation*}
The set~$\D \E ( u ) ( \TT_{\L^\infty_\U}[u])$ can be seen as a first-order conic convex approximation of the $\L^\infty_\U$-accessible set at~$x_u(T)$.

In the general control constraints case where $\U$ is not assumed to be convex, we can use sophisticated~$\L^1$-perturbations, well known in the literature as \textit{needle-like variations}. Precisely a needle-like variation $u^\alpha_{(\t,\omega)} \in \L^\infty([0,T],\U)$ of a given control~$u \in \UU \cap \L^\infty([0,T],\U)$ is defined by
\begin{equation}\label{eq_singleneedle}
u^\alpha_{(\t,\omega)}(t) = \left\lbrace \begin{array}{ll}
\omega & \text{along } [\t,\t+\alpha), \\[3pt]
u(t) & \text{elsewhere,} 
\end{array}
\right.
\end{equation}
for a.e.\ $t \in [0,T]$ and for~$ \alpha \geq 0$, with~$(\t,\omega) \in \mathcal{L}( f_u) \times \U $, where~$\mathcal{L}( f_u)$ stands for the full-measure set of all Lebesgue points in $[0,T)$ of the essentially bounded measurable function~$f_u = f(x_u,u,\cdot)$. In that framework, it is well known that~$u^\alpha_{(\t,\omega)}$ belongs to~$ \UU$ for sufficiently small~$\alpha \geq 0$ and that
\begin{equation}\label{eq2}
\lim\limits_{\alpha \to 0^+} \dfrac{\E(u^\alpha_{(\t,\omega)})-\E(u)}{\alpha} = w^u_{(\t,\omega)} (T),
\end{equation}
where~$w^u_{(\t,\omega)} \in \AC([\t,T],\R^n)$ is the unique solution to
$$
\left\lbrace
\begin{array}{l}
\dot{w}(t) =  \nabla_x f(x_u(t),u(t),t) w(t) , \quad \text{a.e.\ } t \in [\t,T] , \\
w(\t) =  f(x_u(\t),\omega,\t) - f(x_u(\t),u(\t),\t) .
\end{array}
\right.
$$
The elements~$w^u_{(\t,\omega)} (T)$, with~$(\t,\omega) \in \mathcal{L}( f_u) \times \U $, are called the \textit{strong $\U$-variation vectors} associated with~$u$. 

\begin{definition}[$\U$-Pontryagin cone]\label{def_pontcone}
The \emph{$\U$-Pontryagin cone} of a control~$u \in \UU \cap \L^\infty([0,T],\U)$, denoted by $\Pont_\U[u]$, is the smallest convex cone containing all strong $\U$-variation vectors associated with~$u$.\footnote{In the literature, usually the $\U$-Pontryagin cone of a control~$u \in \UU \cap \L^\infty([0,T],\U)$ is defined as the smallest \textit{closed} convex cone containing all strong~$\U$-variation vectors associated with~$u$ (see, e.g., \cite{LeeMarkus}). As explained in Remark~\ref{reminclusion}, considering the closure (or not) has no impact on the notions and results presented in this paper. Nevertheless we emphasize that the multiple needle-like variations of the control~$u$ (see Section~\ref{secproof_wUreginterior}) generate (only) the~$\U$-Pontryagin cone of~$u$ as defined in Definition~\ref{def_pontcone} (i.e., without closure).}
\end{definition}

A strong $\U$-variation vector associated with a control~$u \in \UU \cap \L^\infty([0,T],\U)$ is generated in~\eqref{eq2} by using a \textit{single} needle-like variation~\eqref{eq_singleneedle}: this is standard in the literature. What is less standard is that, actually, the $\U$-Pontryagin cone, which consists of all conic convex combinations of strong~$\U$-variation vectors associated with~$u$, can be generated by using \textit{multiple} needle-like variations (see Section~\ref{secproof_wUreginterior}). Hence the set~$\Pont_\U[u]$ can be seen as a first-order conic convex approximation of the $\L^\infty_\U$-accessible set at~$x_u(T)$. Note that, when~$\U$ is convex, it is larger than~$\D \E ( u ) ( \TT_{\L^\infty_\U}[u])$ (in the sense of Remark~\ref{reminclusion}) which leads to the following weakened notion of~$\U$-regularity.

\begin{definition}[weakly $\U$-regular control]\label{def_wUreg}
A control~$u \in \UU \cap \L^\infty([0,T],\U)$ is said to be~\emph{weakly $\U$-regular} if~$\Pont_\U[u] = \R^n$. The control~$u$ is said to be~\emph{strongly $\U$-singular} when it is not weakly~$\U$-regular, i.e., $\Pont_\U[u]$ is a proper convex subcone of~$\R^n$.
\end{definition}

Although the $\U$-Pontryagin cone of a control~$u \in \UU \cap \L^\infty([0,T],\U)$ cannot be written as the range of a differential~$\D \E (u)$ taken in an appropriate sense (see Remark~\ref{rem_conenonimage}), the next proposition can be obtained by applying the conic implicit function theorem~\cite[Theorem~1]{antoine1990} to a restriction of~$\E$ to a multiple needle-like variation (see the proof in Section~\ref{secproof_wUreginterior}).

\begin{proposition}\label{prop_wUreginterior}
If a control~$u \in \UU \cap \L^\infty([0,T],\U)$ is weakly~$\U$-regular, then there exist an open neighborhood~$\VV $ of~$x_u(T)$ and a mapping~$V : \VV \to \UU \cap \L^\infty([0,T],\U)$, that is continuous when endowing the codomain with the~$\L^1$-metric, satisfying~$V(x_u(T)) = u$ and~$\E ( V(z) ) = z$ for all~$z \in \VV$. In particular any point in~$\VV$ is $\L^\infty_{\U}$-reachable in time~$T$ from~$x^0$, thus~$x_u(T)$ belongs to the interior of the~$\L^\infty_{\U}$-accessible set.
\end{proposition}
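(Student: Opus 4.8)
The plan is to reduce the statement to the conic implicit function theorem of~\cite{antoine1990} by restricting the end-point mapping~$\E$ to a finite-dimensional family of \emph{multiple} needle-like variations of~$u$. First I would fix points $(\t_1,\omega_1),\ldots,(\t_k,\omega_k) \in \mathcal{L}(f_u)\times\U$, with $\t_1<\cdots<\t_k$, whose associated strong $\U$-variation vectors $w^u_{(\t_j,\omega_j)}(T)$ positively span~$\R^n$; such a finite family exists precisely because $\Pont_\U[u]=\R^n$ and $\Pont_\U[u]$ is, by Definition~\ref{def_pontcone}, the \emph{convex cone} generated by the strong $\U$-variation vectors (no closure needed, by the footnote). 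For $\alpha=(\alpha_1,\ldots,\alpha_k)$ in a small neighborhood of~$0$ in $\R_+^k$ I would define the multiple needle-like variation $u^\alpha$ by inserting the constant value $\omega_j$ on an interval of length $\alpha_j$ to the right of $\t_j$ (the intervals are disjoint for $\alpha$ small since the $\t_j$ are distinct), leaving $u$ elsewhere. Standard needle-variation analysis gives that $u^\alpha\in\UU$ for $\alpha$ small, that $\alpha\mapsto\E(u^\alpha)$ is continuous (indeed Lipschitz) and differentiable at $\alpha=0$ along~$\R_+^k$, with $\partial_{\alpha_j}\E(u^\alpha)\big|_{\alpha=0}=w^u_{(\t_j,\omega_j)}(T)$ — this is the content I would need to spell out carefully, namely that the first-order effects of simultaneous needles simply add up (a Gronwall-type estimate on the variational equation on each successive subinterval).

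Next I would introduce the map $G:\VV_0\times\R_+^k\to\R^n$, $G(z,\alpha)=\E(u^\alpha)$ viewed as depending on a dummy parameter $z$ ranging in a neighborhood $\VV_0$ of $x_u(T)$, or more directly apply~\cite[Theorem~1]{antoine1990} to the map $\alpha\mapsto\E(u^\alpha)$ from the cone $\R_+^k$ into $\R^n$: its conic differential at $\alpha=0$ sends $\R_+^k$ onto the convex cone generated by the $w^u_{(\t_j,\omega_j)}(T)$, which is all of $\R^n$ by the choice of the family. The conic implicit function theorem then yields an open neighborhood $\VV$ of $\E(u^0)=x_u(T)$ and a continuous map $z\mapsto\alpha(z)\in\R_+^k$ with $\alpha(x_u(T))=0$ and $\E(u^{\alpha(z)})=z$ for all $z\in\VV$. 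Setting $V(z)=u^{\alpha(z)}$ gives the desired right inverse; continuity of $V$ into $\UU\cap\L^\infty([0,T],\U)$ for the $\L^1$-metric follows because $\|u^{\alpha}-u^{\alpha'}\|_{\L^1}\le C\,|\alpha-\alpha'|$ (the controls differ only on intervals of total length $O(|\alpha-\alpha'|)$, on which they are uniformly bounded), composed with the continuity of $z\mapsto\alpha(z)$. The last sentence of the statement is then immediate: $\E(V(z))=z$ shows every $z\in\VV$ is $\L^\infty_\U$-reachable in time~$T$ from~$x^0$, so $x_u(T)\in\Int\big(\E(\UU\cap\L^\infty([0,T],\U))\big)$.

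The main obstacle, and the step deserving the most care, is the differentiability of $\alpha\mapsto\E(u^\alpha)$ on the cone $\R_+^k$ \emph{uniformly in all $k$ directions at once}, i.e.\ establishing that the multiple needle-like variation produces exactly the sum $\sum_j\alpha_j w^u_{(\t_j,\omega_j)}(T)+o(|\alpha|)$. For a single needle this is~\eqref{eq2}; for several needles placed at distinct Lebesgue points one propagates the perturbation through the linearized flow across the gaps between consecutive $\t_j$'s, and the cross-terms between different needles are of order $\alpha_i\alpha_j=o(|\alpha|)$, but this requires a uniform Gronwall estimate and control of the remainder in the Lebesgue-point expansion at each $\t_j$ simultaneously. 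A secondary technical point is checking that the hypotheses of~\cite[Theorem~1]{antoine1990} (the regularity of the map on the cone, and the surjectivity of the conic derivative onto~$\R^n$) are met in exactly the form required; here the fact that $\Pont_\U[u]$ is defined \emph{without} closure is what guarantees a genuine finite positively-spanning subfamily, so that the conic derivative is onto rather than merely dense.
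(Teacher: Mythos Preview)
Your overall strategy coincides with the paper's: restrict~$\E$ to a finite-dimensional family of multiple needle-like variations, verify that the conic derivative at the origin hits all of~$\R^n$, and invoke the conic implicit function theorem of~\cite{antoine1990}. There is, however, one genuine gap. You assert that the spanning family $(\t_1,\omega_1),\ldots,(\t_k,\omega_k)$ can be taken with $\t_1<\cdots<\t_k$ strictly increasing. The equality $\Pont_\U[u]=\R^n$ only guarantees finitely many pairs $(\t_j,\omega_j)$ whose variation vectors positively span~$\R^n$; nothing prevents several of the~$\t_j$ from coinciding (same Lebesgue point, different values in~$\U$), and then your ``intervals are disjoint since the~$\t_j$ are distinct'' fails. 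The paper handles this explicitly: its multiple needle-like variation (Definition~\ref{def_multineedle}) allows, at each Lebesgue point~$\t_q$, \emph{several} values $\omega^1_q,\ldots,\omega^{R_q}_q$ inserted on successive adjacent subintervals, and Remark~\ref{remmultiplesvaleurs} stresses that this extra degree of freedom is essential to generate the full Pontryagin cone. In the proof the paper writes each $\pm e_j$ as a conic combination of variation vectors, then gathers the Lebesgue points that happen to be equal into a single package~$\chi$ before applying Lemma~\ref{lem_multineedle}. You should either adopt this stacking, or supply an argument (e.g.\ via approximate continuity of~$f_u$ at density points) that repeated~$\t$'s can always be perturbed away; the latter is possible but not shorter.

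A secondary point: you speak of the map $\alpha\mapsto\E(u^\alpha)$ being ``differentiable at $\alpha=0$'' with remainder~$o(|\alpha|)$, but the conic implicit function theorem as used here requires a~$\C^1$ map on a full box~$[0,\beta]^R$, not merely conic differentiability at the origin. The paper secures this through Lemma~\ref{lem_multineedle}, which asserts~$\C^1$ regularity of~$\Psi(\bbar\alpha)=\E(u^{\bbar\alpha}_\chi)$ on~$[0,\beta]^R$. Your ``main obstacle'' paragraph identifies the right estimate, but you should aim for~$\C^1$ on a neighborhood of~$0_{\R^R}$, not just a first-order expansion at the vertex.
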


Like in Sections~\ref{sec_regwithout} and~\ref{sec_regwith}, we next provide a Hamiltonian characterization of strongly~$\U$-singular controls (see Proposition~\ref{prop_charactsUsing} below, proved in Section~\ref{secproof_charactsUsing}).

\begin{definition}[strong $\U$-extremal lift]\label{def_sUextremallift}
A~\emph{strong $\U$-extremal lift} of a pair~$(x_u,u)$, where~$u \in \UU \cap \L^\infty([0,T],\U)$, is a triple~$(x_u,u,p)$ where~$p \in \AC([0,T],\R^n)$ (called adjoint vector) is a solution to the adjoint equation~\eqref{eqAE} satisfying the \emph{Hamiltonian maximization condition}
\begin{equation}\label{eqHM}\tag{HM}
u(t) \in \argmax_{\omega \in \U} H(x_u(t),\omega,p(t),t)
\end{equation}
for a.e.\ $t \in [0,T]$. The strong~$\U$-extremal lift~$(x_u,u,p)$ is said to be \emph{nontrivial} if~$p$ is nontrivial.
\end{definition}

\begin{proposition}\label{prop_charactsUsing}
A control~$u \in \UU \cap \L^\infty([0,T],\U)$ is strongly~$\U$-singular if and only if the pair~$(x_u,u)$ admits a nontrivial strong~$\U$-extremal lift.
\end{proposition}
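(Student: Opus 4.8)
The plan is to mirror the structure already used for Propositions~\ref{prop_charactsing} and~\ref{prop_charactUsing}: characterize strong $\U$-singularity of $u$, i.e.\ the fact that $\Pont_\U[u]$ is a proper convex subcone of $\R^n$, by a separation argument, and then identify the separating covector with an adjoint vector satisfying the Hamiltonian maximization condition~\eqref{eqHM}. Recall that $\Pont_\U[u]$ is the smallest convex cone containing all strong $\U$-variation vectors $w^u_{(\t,\omega)}(T)$, with $(\t,\omega)\in\mathcal{L}(f_u)\times\U$. By Definition~\ref{def_wUreg}, $u$ is strongly $\U$-singular iff $\Pont_\U[u]\neq\R^n$. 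A convex cone in $\R^n$ is proper iff it is contained in some closed half-space through the origin, so (using the fact, noted in Remark~\ref{reminclusion}, that passing to the closure is harmless here) there exists a nontrivial vector $p_T\in\R^n$ with $\langle p_T, w^u_{(\t,\omega)}(T)\rangle_{\R^n}\leq 0$ for all $(\t,\omega)\in\mathcal{L}(f_u)\times\U$. Conversely, the existence of such a $p_T$ clearly forces $\Pont_\U[u]\subsetneq\R^n$. So the whole statement reduces to: \emph{there exists a nontrivial $p_T\in\R^n$ separating all strong $\U$-variation vectors from above if and only if $(x_u,u)$ admits a nontrivial strong $\U$-extremal lift.}

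The bridge between the two formulations is the adjoint equation. First I would let $p\in\AC([0,T],\R^n)$ be the solution of the backward linear Cauchy problem~\eqref{eqAE} with terminal condition $p(T)=p_T$; since $\nabla_x H(x_u,u,p,t)=\nabla_x f(x_u(t),u(t),t)^\top p$, the adjoint equation is the transpose of the linearized state equation, so $t\mapsto\langle p(t),w^u_{(\t,\omega)}(t)\rangle_{\R^n}$ is constant on $[\t,T]$ for each fixed needle-like variation, the linearized dynamics $\dot w=\nabla_x f(x_u,u,t)w$ having no forcing term after time $\t$. Evaluating this invariant at $t=\t$ and at $t=T$ gives
\begin{equation*}
\langle p_T, w^u_{(\t,\omega)}(T)\rangle_{\R^n}=\big\langle p(\t),\, f(x_u(\t),\omega,\t)-f(x_u(\t),u(\t),\t)\big\rangle_{\R^n}=H(x_u(\t),\omega,p(\t),\t)-H(x_u(\t),u(\t),p(\t),\t).
\end{equation*}
Hence the separation inequality $\langle p_T,w^u_{(\t,\omega)}(T)\rangle_{\R^n}\leq 0$ for all $(\t,\omega)\in\mathcal{L}(f_u)\times\U$ is exactly equivalent to $H(x_u(\t),u(\t),p(\t),\t)\geq H(x_u(\t),\omega,p(\t),\t)$ for all such $(\t,\omega)$, that is, to the Hamiltonian maximization condition~\eqref{eqHM} holding for a.e.\ $t\in[0,T]$ (the complement of $\mathcal{L}(f_u)$ being negligible). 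Nontriviality of $p$ is equivalent to nontriviality of $p_T$ since $p$ solves a linear ODE. This establishes both implications at once.

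The only genuinely delicate point is the passage from "$\Pont_\U[u]$ is a proper convex subcone of $\R^n$" to "there is a separating half-space", because a convex cone can be proper yet dense, so a naive Hahn--Banach separation need not apply to the cone itself. This is precisely why Remark~\ref{reminclusion} is invoked: one works with $\overline{\Pont_\U[u]}$, a closed convex cone, which is proper in $\R^n$ iff it omits some point, and a closed convex proper cone in $\R^n$ is always contained in a closed half-space through the origin — and, as asserted in Remark~\ref{reminclusion}, $\Pont_\U[u]=\R^n$ iff $\overline{\Pont_\U[u]}=\R^n$, so the distinction does not affect the statement. Apart from this topological care and the routine verification that $t\mapsto\langle p(t),w^u_{(\t,\omega)}(t)\rangle$ is indeed constant (a direct differentiation using the two ODEs), everything else is bookkeeping already carried out in the proofs of Propositions~\ref{prop_charactsing} and~\ref{prop_charactUsing}, whose structure this proof follows verbatim.
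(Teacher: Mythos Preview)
Your proposal is correct and follows essentially the same approach as the paper: the paper also isolates the key identity that $t\mapsto\langle p(t),w^u_{(\t,\omega)}(t)\rangle_{\R^n}$ is constant on $[\t,T]$ (packaged as a lemma), then uses a separation argument on the convex cone $\Pont_\U[u]$ and the linearity of~\eqref{eqAE} to conclude. The only cosmetic difference is that the paper phrases the separation step as ``$0_{\R^n}$ lies on the boundary of the proper convex cone, hence a supporting hyperplane exists'' rather than passing to the closure via Remark~\ref{reminclusion}; your worry about a proper-but-dense convex cone is in fact moot in finite dimensions, which is exactly the content of that remark.
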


From Propositions~\ref{prop_wUreginterior} and~\ref{prop_charactsUsing}, if a pair~$(x_u,u)$, where~$u \in \UU \cap \L^\infty([0,T],\U)$, has no nontrivial strong~$\U$-extremal lift, then any point of an open neighborhood of~$x_u(T)$ is~$\L^\infty_{\U}$-reachable in time~$T$ from~$x^0$. The contrapositive  statement coincides exactly with the well known geometric Pontryagin maximum principle: if~$x_u(T)$, for some~$u \in \UU \cap \L^\infty([0,T],\U)$, belongs to the boundary of the~$\L^\infty_\U$-accessible set, then the pair~$(x_u,u)$ admits a nontrivial strong $\U$-extremal lift. 

\begin{remark}\label{reminclusion}
Let~$u \in \UU \cap \L^\infty([0,T],\U)$. Since~$\Pont_\U[u]$ is convex, we have $\Pont_\U[u] = \R^n$ if and only if its closure satisfies~$\mathrm{Clos}(\Pont_\U[u])= \R^n$. When~$\U$ is convex, we have $ \D \E ( u ) ( \TT_{\L^\infty_\U}[u]) \subset \mathrm{Clos}(\Pont_\U[u])$ and thus, if~$u$ is strongly~$\U$-regular, then it is weakly $\U$-regular.\footnote{This fact can also be derived from the Hamiltonian characterizations.} The converse is not true in general, as shown in the following two examples.
\end{remark}

\begin{example}\label{example3}
Take~$T=n=m=1$, $\U = [-1,1]$ and~$f(x,u,t) = u^3$ for all~$(x,u,t) \in \R \times \R \times [0,T]$. From the Hamiltonian characterizations, the constant control~$u \equiv 0$ is weakly~$\U$-regular and weakly $\U$-singular.
\end{example}

\begin{example}\label{example3bis}
Take~$T=n=1$, $m=2$, $\U = [-1,1]^2$ and~$f(x,(u_1,u_2),t) = u_1 u_2$ for all~$(x,(u_1,u_2),t) \in \R \times \R^2 \times [0,T]$. From the Hamiltonian characterizations, the constant control~$u \equiv 0_{\R^2}$ is weakly~$\U$-regular and weakly $\U$-singular.
\end{example}

\begin{remark}\label{rem56}
No relationship can be established between strong regularity and weak~$\U$-regularity in general. One can check that Example~\ref{example2} provides a control that is strongly regular and strongly~$\U$-singular, and that Example~\ref{example3} provides a control that is weakly singular and weakly~$\U$-regular. We refer to Propositions~\ref{propint} and~\ref{proplinear} further for relationships in special cases.
\end{remark}

\begin{remark}\label{rem13}
It follows from the Hamiltonian characterization that, if a control~$u \in \UU \cap \L^\infty([0,T],\U)$ is strongly~$\U$-singular on the interval~$[0,T]$ with the starting point~$x^0$, then it is also strongly~$\U$-singular on any subinterval~$[\t^0,\t^1] \subset [0,T]$ of nonempty interior with the starting point~$x_u(\t^0)$. When~$\U$ is convex, the same assertion is true when replacing ``strongly~$\U$-singular" with ``weakly $\U$-singular".
\end{remark}

\begin{remark}
Note that the conclusions of Propositions~\ref{prop_Ureginterior} and~\ref{prop_wUreginterior} are distinct. In Proposition~\ref{prop_Ureginterior}, when~$\U$ is convex and the control~$u \in \UU \cap \L^\infty([0,T],\U)$ is strongly~$\U$-regular, the controls allowing to reach an open neighborhood of~$x_u(T)$ can be chosen close to~$u$ in~$\L^\infty$-topology. In Proposition~\ref{prop_wUreginterior}, when the control~$u \in \UU \cap \L^\infty([0,T],\U)$ is (only) weakly~$\U$-regular, closedness is obtained in the weaker $\L^1$-topology (this is because needle-like variations are~$\L^1$-perturbations). There are similar subtleties in Section~\ref{sec_comments} due to the fact that piecewise constant functions are dense in~$\L^\infty([0,T],\R^m)$ when endowed with the~$\L^1$-norm (but not with the natural~$\L^\infty$-norm).
\end{remark}

\subsection{Additional comments and results}\label{sec_further_relationships}

The next proposition, which seems to be new, follows straightforwardly from the Hamiltonian characterizations and from the fact that, when~$\U$ is convex, the normal cone to~$\U$ at any interior point of~$\U$ is reduced to~$\{ 0_{\R^m} \}$.

\begin{proposition}\label{propint}
Let~$u \in \UU \cap \L^\infty([0,T],\Int(\U))$, where~$\Int(\U)$ is the interior of~$\U$. 
\begin{enumerate}[label=\rm{(\roman*)}]
\item If~$u$ is strongly regular, then~$u$ is weakly~$\U$-regular. The converse is not true in general (see Example~\ref{example3} and Remark~\ref{rem56}).
\item When~$\U$ is convex,~$u$ is strongly regular if and only if~$u$ is strongly~$\U$-regular.\footnote{This fact is obvious when~$u$ belongs to~$\Int(\L^\infty([0,T],\U))$ since then~$\TT_{\L^\infty_\U}[u] = \L^\infty([0,T],\R^m)$. However note that the inclusion~$\Int(\L^\infty([0,T],\U)) \subset \L^\infty([0,T],\Int(\U))$ may be strict (for a counterexample, take~$T=m=1$, $\U=[0,1]$ and~$u(t)=t$ for a.e.\ $t \in [0,T]$).}
\end{enumerate}
\end{proposition}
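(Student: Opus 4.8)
The plan is to argue entirely through the Hamiltonian characterizations (Propositions~\ref{prop_charactsing}, \ref{prop_charactUsing} and~\ref{prop_charactsUsing}), reducing both items to two elementary facts. First, if $\U$ is convex and $\omega_0 \in \Int(\U)$, then the normal cone satisfies $\NN_\U[\omega_0] = \{ 0_{\R^m} \}$ (take $\omega_0 \pm \eps\vartheta \in \U$ for small $\eps>0$ in the defining inequality). Second, if a function of class~$\C^1$ on a neighborhood of $\omega_0 \in \Int(\U)$ attains its maximum over $\U$ at $\omega_0$, then its gradient vanishes at $\omega_0$ (first-order interior optimality). Since $f$ is of class~$\C^1$ with respect to its first two variables, for a.e.\ $t$ the map $\omega \mapsto H(x_u(t),\omega,p(t),t) = \langle p(t), f(x_u(t),\omega,t)\rangle_{\R^n}$ is of class~$\C^1$, so the second fact applies along the value $u(t) \in \Int(\U)$.

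For item~(i), I would argue by contraposition. Assume $u$ is not weakly $\U$-regular, i.e., strongly $\U$-singular. By Proposition~\ref{prop_charactsUsing} the pair~$(x_u,u)$ admits a nontrivial strong $\U$-extremal lift~$(x_u,u,p)$; thus $p$ is a nontrivial solution of~\eqref{eqAE} satisfying the Hamiltonian maximization condition~\eqref{eqHM} for a.e.\ $t$. On the full-measure set of times where both~\eqref{eqHM} holds and $u(t) \in \Int(\U)$, the second fact above forces $\nabla_u H(x_u(t),u(t),p(t),t) = 0_{\R^m}$; hence~\eqref{eqNHG} holds a.e., so $(x_u,u,p)$ is a nontrivial weak extremal lift and, by Proposition~\ref{prop_charactsing}, $u$ is weakly singular, i.e., not strongly regular. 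This proves that strong regularity implies weak $\U$-regularity. That the converse may fail under the hypotheses of the proposition is witnessed by Example~\ref{example3} (where $\U=[-1,1]$, $f(x,u,t)=u^3$, $u\equiv 0\in\Int(\U)$), which is weakly $\U$-regular but weakly singular; see also Remark~\ref{rem56}.

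For item~(ii), assume $\U$ convex. The implication ``strongly $\U$-regular $\Rightarrow$ strongly regular'' is already recorded in Remark~\ref{rem1}. For the converse, I would use the first fact above: since $u(t) \in \Int(\U)$ for a.e.\ $t$, we have $\NN_\U[u(t)] = \{0_{\R^m}\}$, so for such a control the Hamiltonian gradient condition~\eqref{eqHG} is \emph{equivalent} to the null Hamiltonian gradient condition~\eqref{eqNHG}. Consequently a triple $(x_u,u,p)$ with $p$ nontrivial is a weak $\U$-extremal lift if and only if it is a weak extremal lift; comparing Propositions~\ref{prop_charactUsing} and~\ref{prop_charactsing} then shows that $u$ is weakly $\U$-singular if and only if it is weakly singular, equivalently $u$ is strongly $\U$-regular if and only if it is strongly regular.

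I do not expect a genuine obstacle here: the proposition is essentially the remark that the normal-cone and first-order conditions collapse to $\nabla_u H = 0_{\R^m}$ as soon as the control stays in $\Int(\U)$. The only points demanding care are the almost-everywhere bookkeeping (intersecting the full-measure set where~\eqref{eqHM}, resp.~\eqref{eqHG}, holds with the full-measure set $\{t : u(t)\in\Int(\U)\}$) and the clean invocation of the $\C^1$ interior maximality condition, legitimate thanks to the regularity assumed on $f$. It is also worth stressing, as in the footnote to the statement, that $u \in \L^\infty([0,T],\Int(\U))$ is strictly weaker than $u \in \Int(\L^\infty([0,T],\U))$, so this argument genuinely goes beyond the trivial case $\TT_{\L^\infty_\U}[u] = \L^\infty([0,T],\R^m)$.
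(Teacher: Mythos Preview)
Your proposal is correct and follows essentially the same approach as the paper, which does not give a detailed proof but simply states that the proposition ``follows straightforwardly from the Hamiltonian characterizations and from the fact that, when~$\U$ is convex, the normal cone to~$\U$ at any interior point of~$\U$ is reduced to~$\{ 0_{\R^m} \}$.'' You have fleshed out exactly this route: item~(ii) via the collapse $\NN_\U[u(t)]=\{0_{\R^m}\}$ making~\eqref{eqHG} and~\eqref{eqNHG} equivalent, and item~(i) via the interior first-order optimality condition turning~\eqref{eqHM} into~\eqref{eqNHG}, with the a.e.\ bookkeeping handled correctly.
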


\begin{remark}\label{rem14}
By Remark~\ref{rem13} and Proposition~\ref{propint}, if a control~$u \in \UU \cap \L^\infty([0,T],\U)$ takes its values in $\Int(\U)$ along a subinterval~$[\t^0,\t^1]\subset [0,T]$ of nonempty interior on which it is moreover strongly regular with the starting point~$x_u(\t^0)$, then~$u$ is weakly~$\U$-regular (and even strongly~$\U$-regular if~$\U$ is convex) on~$[0,T]$ with the starting point~$x^0$.
\end{remark}

The control system~\eqref{contsys} is said to be \textit{control-affine} when~$f(x,u,t)=g(x,t)+B(x,t)u$ for all~$(x,u,t) \in \R^n \times \R^m \times [0,T]$, where~$g : \R^n \times [0,T] \to \R^n$ and~$B  : \R^n \times [0,T] \to \R^{n \times m}$ are continuous mappings, of class~$\C^1$ with respect to their first variable. In that context we have
$$ 
H(x,\omega,p,t) - H(x,u,p,t) = \langle \nabla_u H (x,u,p,t) , \omega - u \rangle_{\R^m}
$$
for all~$(x,u,\omega,p,t) \in \R^n \times \R^m \times \R^m \times \R^n \times [0,T]$ and the next proposition follows straightforwardly.

\begin{proposition}\label{proplinear}
Assume that the control system~\eqref{contsys} is control-affine and let~$u \in \UU \cap \L^\infty([0,T],\U)$. 
\begin{enumerate}[label=\rm{(\roman*)}]
\item If~$u$ is strongly~$\mathrm{conv}(\U)$-regular, where~$\conv(\U)$ is the convex hull of~$\U$, then~$u$ is weakly~$\U$-regular.
\item If~$u$ is weakly~$\U$-regular, then~$u$ is strongly regular. The converse is not true in general (see Example~\ref{example2} and Remark~\ref{rem56}).
\item When~$\U$ is convex,~$u$ is weakly~$\U$-regular if and only if~$u$ is strongly~$\U$-regular.
\end{enumerate}
\end{proposition}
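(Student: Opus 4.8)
The plan is to derive all three assertions from the Hamiltonian characterizations of Propositions~\ref{prop_charactsing}, \ref{prop_charactUsing} and~\ref{prop_charactsUsing}, combined with the affine-in-control identity displayed immediately before the statement. The whole point is that, in the control-affine case, the three pointwise conditions governing those characterizations --- the null Hamiltonian gradient condition~\eqref{eqNHG}, the Hamiltonian gradient condition~\eqref{eqHG}, and the Hamiltonian maximization condition~\eqref{eqHM} --- are tightly linked.

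First I would fix $u \in \UU \cap \L^\infty([0,T],\U)$ and an adjoint vector $p \in \AC([0,T],\R^n)$ solving the adjoint equation~\eqref{eqAE} along $(x_u,u)$, and clarify, for a.e.\ $t \in [0,T]$, the relations between the conditions: (a) $\nabla_u H(x_u(t),u(t),p(t),t) = 0_{\R^m}$; (b) $\nabla_u H(x_u(t),u(t),p(t),t) \in \NN_\U[u(t)]$; (c) $u(t) \in \argmax_{\omega \in \U} H(x_u(t),\omega,p(t),t)$; and (c$'$) the same maximization over $\conv(\U)$. Using the identity $H(x,\omega,p,t) - H(x,u,p,t) = \langle \nabla_u H(x,u,p,t), \omega - u \rangle_{\R^m}$, condition~(c) reads $\langle \nabla_u H(x_u(t),u(t),p(t),t), \omega - u(t) \rangle_{\R^m} \leq 0$ for all $\omega \in \U$, which is exactly~(b); and since $H$ is affine in its second variable, a linear functional has the same supremum over $\U$ and over $\conv(\U)$, so, using $u(t) \in \U \subset \conv(\U)$, one gets (c)$\iff$(c$'$). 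Finally~(a) trivially implies both~(b) and~(c) but not conversely. Translating this to triples: a nontrivial weak extremal lift of $(x_u,u)$ is a fortiori a nontrivial strong $\U$-extremal lift; when $\U$ is convex the notions of weak $\U$-extremal lift and strong $\U$-extremal lift of $(x_u,u)$ coincide; and (in the control-affine setting) a triple is a strong $\U$-extremal lift of $(x_u,u)$ if and only if it is a strong $\conv(\U)$-extremal lift.

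It then remains to read these three facts through the characterizations. For~(ii): if $u$ is weakly singular, then $(x_u,u)$ admits a nontrivial weak extremal lift by Proposition~\ref{prop_charactsing}, hence a nontrivial strong $\U$-extremal lift, hence $u$ is strongly $\U$-singular by Proposition~\ref{prop_charactsUsing}; the contrapositive is the stated implication, and Example~\ref{example2} (which is control-affine) together with Remark~\ref{rem56} shows that the converse fails. For~(iii): when $\U$ is convex, the coincidence of the two extremal-lift notions together with Propositions~\ref{prop_charactUsing} and~\ref{prop_charactsUsing} yields that $u$ is weakly $\U$-singular if and only if it is strongly $\U$-singular, which is the claimed equivalence. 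For~(i): if $u$ is strongly $\conv(\U)$-regular, then --- $\conv(\U)$ being convex --- part~(iii) applied to the constraint set $\conv(\U)$ shows that $(x_u,u)$ admits no nontrivial strong $\conv(\U)$-extremal lift, hence (by the last of the three facts above) no nontrivial strong $\U$-extremal lift, hence $u$ is weakly $\U$-regular by Proposition~\ref{prop_charactsUsing}.

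There is no genuinely hard step here; the only points requiring minimal care are the elementary convex-analysis fact that a linear functional attains the same supremum over a set and over its convex hull, and keeping track of the implication directions among (a), (b), (c) --- in particular that~(c) does not recover~(a), which is exactly why~(ii) is a one-way implication. Incidentally the argument yields the stronger equivalence ``strongly $\conv(\U)$-regular $\iff$ weakly $\U$-regular'' in~(i), but only the stated direction is used later.
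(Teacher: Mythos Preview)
Your proposal is correct and follows precisely the route the paper indicates: the paper states that the proposition ``follows straightforwardly'' from the affine identity $H(x,\omega,p,t)-H(x,u,p,t)=\langle\nabla_uH(x,u,p,t),\omega-u\rangle_{\R^m}$ together with the Hamiltonian characterizations, and you have simply made this explicit by spelling out the equivalences and implications among conditions~(a), (b), (c), (c$'$). Your closing observation that~(i) is in fact an equivalence is a correct bonus not stated in the paper.
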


As a particular case of control-affine system, the control system~\eqref{contsys} is said to be \textit{linear} when~$f(x,u,t)=A(t)x+B(t)u +g(t)$ for all~$(x,u,t) \in \R^n \times \R^m \times [0,T]$, where~$A \in \C([0,T], \R^{n \times n})$, $B \in \C([0,T], \R^{n \times m})$ and~$g \in \C([0,T], \R^n)$ are continuous functions. In that context~$\UU = \L^\infty([0,T],\R^m)$ and~$\E$ is affine. An example given in Appendix~\ref{appexamplegrasse} shows that the converse of the geometric Pontryagin maximum principle stated at the end of Section~\ref{sec_wregwith} is not true in general.\footnote{Since~$\U=\R^m$ in that example, it also shows that the converses of the weak versions of the geometric Pontryagin maximum principle stated at the end of Sections~\ref{sec_regwithout} and~\ref{sec_regwith} are also not true in general.} However, for linear control systems, the converse is true, as stated in the next proposition (proved in Section~\ref{secproof_proplinear2}).

\begin{proposition}\label{proplinear2a2b}
Assume that the control system~\eqref{contsys} is linear and let~$u \in \L^\infty([0,T],\U)$. Then $x_u(T)$ belongs to the interior of the~$\L^\infty_\U$-accessible set if and only if $u$ is weakly~$\U$-regular.
\end{proposition}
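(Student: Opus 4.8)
The plan is to prove Proposition~\ref{proplinear2a2b} by showing the two implications separately, using the fact that for a linear control system the end-point mapping $\E$ is affine and $\UU = \L^\infty([0,T],\R^m)$. Write $\E(u) = \mathcal{A}(u) + c$ where $\mathcal{A}$ is linear (indeed $\mathcal{A} = \D\E(u)$ for every $u$, independently of $u$) and $c = \E(0)$. The forward implication, ``$x_u(T)$ in the interior $\Rightarrow$ $u$ weakly $\U$-regular,'' is in fact the contrapositive of the geometric Pontryagin maximum principle already recorded in Section~\ref{sec_wregwith} (via Propositions~\ref{prop_wUreginterior} and~\ref{prop_charactsUsing}), and holds for any nonlinear system; so nothing new is needed there. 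The real content is the converse: assuming $u$ is weakly $\U$-regular, i.e. $\Pont_\U[u] = \R^n$, show $x_u(T) = \E(u)$ lies in the interior of $\E(\L^\infty([0,T],\U))$.

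For the converse, the key step is to exploit affineness to trade first-order information for global information. Since $\E$ is affine, for any $\omega \in \U$ and any measurable set of small measure the needle-like variation gives $\E(u^\alpha_{(\t,\omega)}) - \E(u) = \alpha\, w^u_{(\t,\omega)}(T) + o(\alpha)$, but affineness lets us do better: a ``packet'' of needle-like variations on disjoint intervals produces an exact convex-combination-type formula rather than an asymptotic one. Concretely, replacing $u$ by $\omega_j$ on disjoint subintervals $I_j$ of total small length and passing to the state equation, one checks (using that the variational equation for $w^u_{(\t,\omega)}$ is itself linear and $\t \mapsto w^u_{(\t,\omega)}(T)$ is continuous, plus Lebesgue-point/Riemann-sum arguments) that the reachable increments $\E(\cdot) - \E(u)$ obtained this way fill, up to $o(1)$ errors that can be absorbed, a full neighborhood of $0$ whenever the conic convex combinations of the vectors $w^u_{(\t,\omega)}(T)$ span $\R^n$ — which is exactly the hypothesis $\Pont_\U[u] = \R^n$. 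Alternatively, and perhaps more cleanly, one can simply invoke Proposition~\ref{prop_wUreginterior} directly: weak $\U$-regularity already yields an open neighborhood $\VV$ of $x_u(T)$ with every point of $\VV$ being $\L^\infty_\U$-reachable, hence $x_u(T) \in \Int(\E(\L^\infty([0,T],\U)))$. So the converse is immediate from Proposition~\ref{prop_wUreginterior}, and linearity is not even needed for it.

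Given that both implications follow from results already in the excerpt (the PMP contrapositive for one direction, Proposition~\ref{prop_wUreginterior} for the other), the substance of the proof is really the observation that for linear systems these two directions together are sharp, i.e. there is no gap: being on the boundary is equivalent to admitting a nontrivial strong $\U$-extremal lift and equivalent to $\Pont_\U[u] \neq \R^n$. I would phrase the proof as: ``$\Leftarrow$'' is Proposition~\ref{prop_wUreginterior} (no linearity needed); ``$\Rightarrow$'' we prove the contrapositive — if $u$ is strongly $\U$-singular then $\Pont_\U[u]$ is a proper convex subcone of $\R^n$, so by separation there is $p^1 \in \R^n \setminus \{0\}$ with $\langle p^1, w^u_{(\t,\omega)}(T)\rangle_{\R^n} \leq 0$ for all $(\t,\omega)$; letting $p$ solve the adjoint equation~\eqref{eqAE} with $p(T) = p^1$ and using the linear structure of $f$, one shows $t \mapsto H(x_u(t),\omega,p(t),t) - H(x_u(t),u(t),p(t),t) = \langle p(t), B(t)(\omega - u(t))\rangle$ integrates against this inequality to force $x_u(T)$ onto the supporting hyperplane $\{ z : \langle p^1, z - x_u(T)\rangle_{\R^n} \leq 0 \}$ of the (convex, since $\E$ affine and $\L^\infty([0,T],\U)$ convex) accessible set, hence $x_u(T)$ is on its boundary.

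The main obstacle I anticipate is the convexity of the $\L^\infty_\U$-accessible set: this is what makes ``not interior'' equivalent to ``lies on a supporting hyperplane,'' and it requires $\U$ convex. If $\U$ is not convex the accessible set $\E(\L^\infty([0,T],\U))$ need not be convex, yet by a standard relaxation/bang-bang argument (Lyapunov's convexity theorem, exploiting that $\E$ is affine in $u$) one has $\E(\L^\infty([0,T],\U)) = \E(\L^\infty([0,T],\conv(\U)))$, so the accessible set \emph{is} convex after all and the supporting-hyperplane argument goes through; this relaxation identity, together with the fact (Proposition~\ref{proplinear}(i), applied to the linear hence control-affine system) that weak $\U$-regularity matches strong $\conv(\U)$-regularity up to the needed inclusions, is the one place where care is needed. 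I would make sure to cite the relaxation identity explicitly and check that the separating hyperplane obtained from $\Pont_\U[u] \neq \R^n$ is the same one that supports the (convexified) accessible set at $x_u(T)$, closing the equivalence.
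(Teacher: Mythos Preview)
You have the two implications reversed. The direction ``weakly $\U$-regular $\Rightarrow$ $x_u(T)$ interior'' is the one that follows immediately from Proposition~\ref{prop_wUreginterior} for any system (linear or not); this is the \emph{easy} direction, and indeed the paper's proof section is titled ``only the sufficient condition'' because it proves only the other one. The direction ``$x_u(T)$ interior $\Rightarrow$ weakly $\U$-regular'' is \emph{not} the contrapositive of the geometric PMP---it is the \emph{converse} of the PMP, which the paper explicitly shows fails for general nonlinear systems (Appendix~\ref{appexamplegrasse}). So your first two paragraphs misidentify which implication carries the content and which requires linearity.

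That said, your third paragraph does sketch an argument for the genuinely nontrivial direction, and it is close in spirit to the paper's. The paper argues as follows: first assume $\U$ convex; if $u$ were weakly $\U$-singular, take a nontrivial weak $\U$-extremal lift $(x_u,u,p)$ (Proposition~\ref{prop_charactUsing}), pick $v\in\L^\infty([0,T],\U)$ with $\E(v)=x_u(T)+\gamma p(T)$ for small $\gamma>0$ (using interiority), and then affineness of $\E$ gives $\gamma p(T)=\D\E(u)\cdot(v-u)$, so Lemma~\ref{lemcharactweakUextremal} forces $\gamma\Vert p(T)\Vert^2\le 0$, contradiction. For general $\U$, the paper simply uses the inclusion $\L^\infty([0,T],\U)\subset\L^\infty([0,T],\conv(\U))$ to pass to the convex case, obtaining strong $\conv(\U)$-regularity, and then invokes Proposition~\ref{proplinear}(i) to recover weak $\U$-regularity. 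Your supporting-hyperplane formulation is equivalent once the direction confusion is fixed, but your appeal to Lyapunov's convexity theorem to get $\E(\L^\infty([0,T],\U))=\E(\L^\infty([0,T],\conv(\U)))$ is both stronger than needed (the inclusion suffices) and delicate without compactness of $\U$; the paper's route via Proposition~\ref{proplinear}(i) avoids this entirely.
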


\begin{remark}\label{rem97}
Assume that the control system~\eqref{contsys} is linear and autonomous (i.e., $A(\cdot)=A$ and~$B(\cdot)=B$ are constant). Since~$\UU = \L^\infty([0,T],\R^m)$ and~$\E$ is affine, a control~$u \in \L^\infty([0,T],\R^m)$ is strongly regular if and only if~$\D\E(u)$ is surjective, if and only if~$\E$ is surjective, if and only if the pair~$(A,B)$ satisfies the Kalman condition. This characterization does not depend on $(T,x^0,u)$. Hence, under the Kalman condition, any control~$u \in \L^\infty([0,T],\R^m)$ is strongly regular on any subinterval~$[\t^0,\t^1]\subset[0,T]$ of nonempty interior and from any starting point. Thus, under the Kalman condition and using Remark~\ref{rem14}, if a control~$u \in \L^\infty([0,T],\U)$ takes its values in $\Int(\U)$ along a subinterval~$[\t^0,\t^1]\subset[0,T]$ of nonempty interior, then~$u$ is weakly~$\U$-regular (and even strongly~$\U$-regular if~$\U$ is convex) on~$[0,T]$ from any starting point.
\end{remark}

We now introduce a last notion which will be instrumental in order to relax the convexity assumption made on~$\U$ in our main result (see Remark~\ref{remequalityinequalityU} and~\ref{remequalityinequalityU2} further for details).

\begin{definition}[parameterization of~$\U$]\label{def_paramU}
We say that~$\U$ is \emph{parameterizable} by a nonempty subset~$\U'$ of~$\R^{m'}$, with~$m' \in \N^*$, if there exists a~$\C^1$ mapping~$\varphi : \R^{m'} \to \R^m$ satisfying~$\varphi (\U') = \U$ and, for every~$u \in \L^\infty([0,T],\U)$, there exists~$u' \in  \L^\infty([0,T],\U')$ such that~$u = \varphi \circ u'$.
\end{definition}

\begin{example}
Using a standard measurable selection theorem, we see that the two-dimensional unit circle~$\U = \{ (u_1,u_2) \in \R^2 \mid u_1^2 + u_2^2 = 1 \}$ is parameterizable by the interval~$[0,2\pi]$.
\end{example}

In the context of Definition~\ref{def_paramU}, the control system~\eqref{contsys} has the same trajectories as the control system~\eqref{contsysprime} given by
\begin{equation}\label{contsysprime}\tag{CS'}
\dot{x}'(t) = f' ( x'(t), u'(t),t ), \quad \text{a.e.\ } t \in [0,T],
\end{equation}
starting at the same initial point~$x^0$, where the dynamics~$f' : \R^n \times \R^{m'} \times [0,T] \to \R^n$ is defined by~$f'(x',u',t) = f(x',\varphi(u'),t)$ for all~$(x',u',t) \in \R^n \times \R^{m'} \times [0,T]$ and where~$\U'$ is the control constraint set. Precisely, for a control~$u \in \UU \cap \L^\infty([0,T],\U)$, any control~$u' \in \L^\infty([0,T],\U')$ satisfying~$u = \varphi \circ u'$ belongs to the set~$\UU'$ of all admissible controls for~\eqref{contsysprime}, and~$x'_{u'} = x_u$. Furthermore, by the Hamiltonian characterization, if~$u$ is weakly~$\U$-regular for~\eqref{contsys}, then~$u'$ is weakly~$\U'$-regular for~\eqref{contsysprime}. We say that weak~$\U$-regularity is \textit{preserved} by parameterization. However, when~$\U$ and~$\U'$ are convex, strong~$\U$-regularity may not be preserved by parameterization, as shown in the following example.

\begin{example}
Consider the framework of Example~\ref{example2}. By the Hamiltonian characterization, the constant control~$u \equiv 0$ is strongly~$\U$-regular. Considering the parameterization of~$\U$ by itself, with the $\C^1$ mapping~$\varphi : \R \to \R$ defined by~$\varphi (u') = u'^3$ for every~$u' \in \R$, we recover the control system considered in Example~\ref{example3} in which the constant control~$u' \equiv 0$, which satisfies~$u = \varphi \circ u'$, is weakly~$\U'$-singular.
\end{example}

\section{Robustness under control sampling of reachability in fixed time}\label{sec_comments}

When dealing with controls~$u \in \L^\infty([0,T],\U)$, the control system~\eqref{contsys} is said to be with \textit{permanent controls}, in the sense that the control value can be modified at any real time~$t \in [0,T]$. Otherwise, when dealing with piecewise constant controls~$u \in \PC^\T([0,T],\U)$, for a given partition~$\T = \{ t_i \}_{i=0,\ldots,N}$ of~$[0,T]$, the control system~\eqref{contsys} is said to be with \textit{sampled-data controls} (see~\cite{Nesic2001}) which are a particular case of \textit{nonpermanent controls}, in the sense that the control value can be modified only at the \textit{sampling times}~$t_i \in \T$ and remains frozen on each \textit{sampling interval}~$[t_i,t_{i+1})$. 

In~\cite{BT2017} we proved that the optimal sampled-data control of a general unconstrained linear-quadratic problem converges pointwisely to the optimal permanent control when the norm of the corresponding partition converges to zero. In an ongoing work we extend this result to a general nonlinear setting, moreover under convex control constraints and with fixed endpoint. For this purpose, robustness under control sampling of reachability in fixed time of the fixed endpoint has to be investigated. This issue has motivated the present work.


For any control~$u \in \UU \cap \L^\infty ([0,T],\U)$, we introduce the properties~\eqref{eqPu} and~\eqref{eqQu} defined by
\begin{multline}\label{eqPu}\tag{$ \P_u $}
\exists \delta > 0, \quad \forall \T \in \PP, \; \Vert \T \Vert \leq \delta, \\ x_u(T) \in \E ( \UU \cap \PC^\T ([0,T],\U))
\end{multline}
and
\begin{equation}\label{eqQu}\tag{$ \P'_u $}
\exists \T \in \PP, \quad x_u(T) \in \E ( \UU \cap \PC^\T ([0,T],\U)),
\end{equation}
where~$\PP$ is the set of all partitions of~$[0,T]$ and where~$\E ( \UU \cap \PC^\T ([0,T],\U))$ is the~\textit{$\PC^\T_\U$-accessible set}. Example~\ref{example1} shows that, for a given control~$u \in \UU \cap \L^\infty ([0,T],\U)$, Property~\eqref{eqQu} is not satisfied in general. Most of the literature focuses on establishing sufficient conditions for properties related to Property~\eqref{eqQu} (see Remark~\ref{remgrasse} further for details and references). One of the novelties of the present work is to provide sufficient conditions for the stronger Property~\eqref{eqPu}. The interest of the threshold~$\delta > 0$ in Property~\eqref{eqPu} (which is not considered in Property~\eqref{eqQu}) is twofold. On one hand, its existence is instrumental to extend the convergence result obtained in~\cite{BT2017} to a general nonlinear setting under convex control constraints and with fixed endpoint, precisely in order to guarantee that the corresponding optimal sampled-data control problem is feasible for partitions of sufficiently small norm. On the other hand, the nonexistence of such a threshold~$\delta > 0$ implies that~$\PC^\T_\U$-reachability in time~$T$ from~$x^0$ of the final point~$x_u(T) $ is sensitive to small perturbations of the partition~$\T$ of~$[0,T]$, in the sense of the next proposition (proved in Section~\ref{secproof_propsensitive} and illustrated in Remark~\ref{remillsensitive} further).

\begin{proposition}\label{propsensitive}
Let $u \in \UU \cap \L^\infty ([0,T],\U)$. If Property~\eqref{eqPu} is not satisfied, then, for any partition~$\T=\{ t_i \}_{i=0,\ldots,N}$ of~$[0,T]$ and any~$\eps > 0$, there exists a partition~$\T^\eps = \{ t^\eps_i \}_{i=0,\ldots,N}$ of~$[0,T]$ such that~$\vert t^\eps_i - t_i \vert < \eps$ for all~$i \in \{ 1,\ldots,N-1 \}$ and such that~$x_u(T)$ is not~$\PC^{\T^\eps}_\U$-reachable in time~$T$ from~$x^0$.
\end{proposition}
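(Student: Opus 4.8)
The plan is to argue by contraposition on the structure of Property~\eqref{eqPu}, exploiting that its failure means that \emph{every} threshold $\delta>0$ admits a ``bad'' partition. Fix a partition $\T=\{t_i\}_{i=0,\ldots,N}$ of $[0,T]$ and some $\eps>0$; we must produce a nearby partition $\T^\eps$ (with the same number $N$ of subintervals, and the interior nodes moved by less than $\eps$) for which $x_u(T)$ fails to be $\PC^{\T^\eps}_\U$-reachable in time $T$ from $x^0$. The first step is to observe that the set of partitions of $[0,T]$ into $N$ subintervals whose interior nodes lie within $\eps$ of those of $\T$, and whose norm is at most any prescribed value, is nonempty provided $\eps$ is adjusted (or $N$ is enlarged by refining). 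More precisely, I would first reduce to the case where $\T^\eps$ is allowed to have \emph{more} subintervals than $\T$: a finer partition only enlarges $\PC^{\T}([0,T],\U)$, so it suffices to find a bad partition that refines a small perturbation of $\T$; but the statement demands the \emph{same} index set $\{0,\ldots,N\}$, so I will instead work directly with $N$ fixed.

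The key step is the following. Since Property~\eqref{eqPu} fails, for the particular choice $\delta = \min\!\big(\eps,\ \tfrac12\min_{i}|t_{i+1}-t_i|\big) > 0$ there exists a partition $\widehat\T = \{\hat t_j\}_{j=0,\ldots,\widehat N}$ of $[0,T]$ with $\|\widehat\T\|\le\delta$ such that $x_u(T)\notin\E(\UU\cap\PC^{\widehat\T}([0,T],\U))$. Because $\|\widehat\T\|\le\delta\le\eps$, this partition $\widehat\T$ is automatically fine: its nodes form an $\eps$-net of $[0,T]$, and in particular for each original interior node $t_i$ there is a node $\hat t_{j(i)}$ of $\widehat\T$ with $|\hat t_{j(i)} - t_i| < \eps$. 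The remaining issue is purely combinatorial: $\widehat\T$ has $\widehat N$ subintervals, possibly many more than $N$, whereas $\T^\eps$ must have exactly $N$. I would resolve this by \emph{not} trying to coarsen $\widehat\T$ but rather by invoking once more the failure of~\eqref{eqPu}, this time phrased on the index set of size $N$: I would set up an auxiliary statement that the negation of~\eqref{eqPu} yields, for every $\delta>0$, a bad partition with \emph{arbitrarily large} norm components allowed --- but this does not directly give $N$ subintervals.

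The honest route, and the one I would actually carry out, is: the negation of~\eqref{eqPu} says $\forall \delta>0$ there is a bad partition of norm $\le\delta$; take a sequence $\delta_k\to 0$ and bad partitions $\T^{(k)}$ of norm $\le\delta_k$. After extracting along a subsequence so that the number of subintervals stabilizes or tends to infinity, and using that the interior nodes of a partition of norm $\le\delta_k$ become dense, one extracts a bad partition whose interior nodes come arbitrarily close to any prescribed finite set of times, in particular to $\{t_1,\ldots,t_{N-1}\}$, while keeping exactly $N$ of them (discarding the surplus nodes only \emph{shrinks} the class $\PC^{\cdot}$, so a coarsening of a bad partition need \emph{not} be bad --- this is the subtlety). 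Hence I would instead argue: among all partitions with index set $\{0,\ldots,N\}$ and interior nodes in $\prod_{i=1}^{N-1}(t_i-\eps,t_i+\eps)$, if \emph{all} of them were good then one could cover a neighbourhood-type argument giving a uniform $\delta$, contradicting $\neg\eqref{eqPu}$ --- here one uses continuity of $\E$ and of trajectories with respect to the partition nodes (the endpoint map on $\PC^\T$ depends continuously on the $t_i$ and the values $u_i$) to upgrade ``good for each perturbed partition'' into ``good for a whole $\|\T\|\le\delta$ family''.

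The main obstacle, as the discussion above makes clear, is precisely this last upgrade: reachability being \emph{open} data is what makes the compactness/continuity argument go through, so the crux is to show that the $\PC^\T_\U$-accessible set depends, in a suitably semicontinuous way, on the partition $\T$ (as a point of the simplex of admissible node vectors), and that $x_u(T)$ lying in the relative interior for every node vector near that of $\T$ forces it to lie in the accessible set uniformly for $\|\T\|$ small --- which is exactly the content of $\eqref{eqPu}$. I expect the proof in the paper to package this as: if $\eqref{eqPu}$ fails, pick $\delta_k\downarrow 0$ and bad $\T^{(k)}$, $\|\T^{(k)}\|\le\delta_k$; these eventually refine any fixed perturbed copy of $\T$, and by a direct reachability-monotonicity observation a partition that refines a bad one... is not necessarily bad, so the argument must go the other way, constructing $\T^\eps$ as a \emph{coarsening-compatible} perturbation of $\T^{(k)}$ whose $N$ interior nodes are chosen among the $\hat t_j$'s closest to the $t_i$'s, and then verifying, via $\PC^{\T^\eps}([0,T],\U)\subset\PC^{\T^{(k)}}([0,T],\U)$ when $\T^\eps$'s nodes are a subset of $\T^{(k)}$'s, that $x_u(T)\notin\E(\UU\cap\PC^{\T^\eps}([0,T],\U))$ for free. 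That inclusion of function classes under node-refinement is the clean mechanism, and choosing $\T^{(k)}$ to additionally contain all of $\{t_1,\ldots,t_{N-1}\}$-perturbed-by-$<\eps$ nodes (which is possible: prescribing finitely many extra nodes while keeping norm small) closes the argument.
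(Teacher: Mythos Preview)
Your final mechanism is exactly the paper's: pick a bad fine partition $\T'$, select for each interior $t_i$ a node of $\T'$ within distance $\eps$, and let $\T^\eps$ consist of those nodes; since $\T^\eps\subset\T'$ one has $\PC^{\T^\eps}([0,T],\U)\subset\PC^{\T'}([0,T],\U)$, so $x_u(T)\notin\E(\UU\cap\PC^{\T^\eps}([0,T],\U))$. But you reach this after several wrong turns, and one of them is a genuine error.

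The sentence ``discarding the surplus nodes only shrinks the class $\PC^{\cdot}$, so a coarsening of a bad partition need \emph{not} be bad'' is false. Shrinking the class of controls can only \emph{preserve} non-reachability: if $x_u(T)$ is already outside $\E(\UU\cap\PC^{\T'}([0,T],\U))$, it is a fortiori outside the image of any subclass. So a coarsening of a bad partition \emph{is} bad, always. You had the monotonicity right two lines earlier (``a partition that refines a bad one is not necessarily bad'') and right again two lines later (the ``clean mechanism''), but this middle sentence has it backwards, and it is precisely this monotonicity that makes the whole proof one line long.

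Everything else in your plan is unnecessary. The continuity/semicontinuity discussion of the accessible set in the partition nodes is not needed. The final sentence about ``prescribing finitely many extra nodes'' in $\T^{(k)}$ is both superfluous and unworkable: you cannot force a bad partition to contain prescribed points (adding nodes \emph{refines}, which does not preserve badness), and you do not have to. The paper simply takes the bad partition $\T'$ with $\Vert\T'\Vert<2\eps$ that the negation of \eqref{eqPu} hands you; then any open interval $(t_i-\eps,t_i+\eps)$ of length $2\eps$ must already contain a node of $\T'$ (otherwise two consecutive nodes of $\T'$ would be at distance $\ge 2\eps$). Pick one such node for each $i\in\{1,\ldots,N-1\}$, set $t_0^\eps=0$, $t_N^\eps=T$, and you are done.
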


This section is organized as follows. In Section~\ref{secfirstinvestigations} we first investigate the condition that~$x_u(T)$, for some~$u \in \UU \cap \L^\infty ([0,T],\U)$, belongs to the interior of the~$\L^\infty_\U$-accessible set. Our main result (Theorem~\ref{thmmain}), which is valid under the stronger condition that~$u$ is weakly~$\U$-regular, is discussed in Section~\ref{secmainresultandcomments}.

\subsection{Final point in the interior of the~$\L^\infty_\U$-accessible set}\label{secfirstinvestigations}

Let~$u \in \UU \cap \L^\infty ([0,T],\U)$. Here we focus on the condition that~$x_u(T)$ belongs to the interior of the~$\L^\infty_\U$-accessible set. The next example, based on a commensurability rigidity, shows that it is not a sufficient condition for Property~\eqref{eqPu}.

\begin{example}\label{example4}
Take~$T=4$, $n=m=1$, $\U = \{ 0 , 1 \}$ and~$f(x,u,t)=u$ for all~$(x,u,t) \in \R \times \R \times [0,T]$. The target point~$x^1 = \pi$ is $\L^\infty_\U$-reachable in time~$T$ from the starting point~$x^0 = 0$ with the control~$u(t)=1$ for a.e.\ $t \in [0,\pi]$ and~$u(t)=0$ for a.e.\ $t \in [\pi,4]$. By the Hamiltonian characterization, the control~$u$ is weakly~$\U$-regular and thus~$x^1 =x_u(T)$ belongs to the interior of the~$\L^\infty_\U$-accessible set (see Proposition~\ref{prop_wUreginterior}). However, for any given partition~$\T$ of~$[0,T]$, $x^1 $ belongs to the~$\PC^\T_\U$-accessible set if and only if there exists a subfamily of sampling intervals associated with~$\T$ whose sum of lengths is equal to~$\pi$. As a consequence, for any partition~$\T$ of~$[0,T]$ containing only rational sampling times (with norm~$\Vert \T \Vert$ arbitrarily small),~$x^1 $ is not~$\PC^\T_\U$-reachable in time~$T$ from~$x^0$. We conclude that Property~\eqref{eqPu} is not satisfied (while Property~\eqref{eqQu} is).
\end{example}

In Example~\ref{example4}, the set~$\U$ is not convex. However note that another counterexample, in which~$\U$ is convex, is provided in Appendix~\ref{appexamplegrasse}.

\begin{remark}\label{remillsensitive}
Example~\ref{example4} illustrates Proposition~\ref{propsensitive} in the sense that, given any partition~$\T=\{ t_i \}_{i=0,\ldots,N}$ of~$[0,T]$ (even such that the target point~$x^1$ is $\PC^\T_\U$-reachable in time~$T$ from~$x^0$) and given any~$\eps > 0$, there always exists a partition~$\T^\eps =\{ t ^\eps_i \}_{i=0,\ldots,N}$ of~$[0,T]$ containing only rational sampling times such that~$\vert t^\eps_i - t_i \vert < \eps$ for all~$i \in \{ 1,\ldots,N-1 \}$, and thus such that~$x^1$ is not~$\PC^{\T^\eps}_\U$-reachable in time~$T$ from~$x^0$. We provide in the following example a similar illustration of Proposition~\ref{propsensitive} with~$\U$ convex.
\end{remark}

\begin{example}\label{example6}
Take~$T=4$, $n=2$, $m=1$, $\U = [0,1]$ and~$f((x_1,x_2),u,t)=(u,u^2)$ for all~$((x_1,x_2),u,t) \in \R^2 \times \R \times [0,T]$. Consider the starting point~$x^0 = 0_{\R^2}$. The point~$x_u(T)$ belongs to the segment~$\{ (x_1,x_2) \in \R^2 \mid 0 \leq x_1 = x_2 \leq 4 \}$ if and only if the corresponding control~$u \in \UU \cap \L^\infty([0,T],\U)$ takes its values in~$\{ 0 , 1 \}$. As a consequence, by considering the target point~$x^1=(\pi,\pi)$, we find the same conclusions as in Example~\ref{example4}.
\end{example}

In the one-dimensional case~$n=1$, the next proposition is obtained (see the proof in Section~\ref{secproof_propn1} based on the fact that one-dimensional connected sets are convex).

\begin{proposition}\label{propn1}
Assume that $n=1$, that~$\U$ is convex\footnote{Actually assuming that~$\U$ is connected is sufficient.} and that~$\UU = \L^\infty([0,T],\R^m)$. If~$x_u(T)$, for some~$u \in \L^\infty ([0,T],\U)$, belongs to the interior of the~$\L^\infty_\U$-accessible set, then Property~\eqref{eqPu} is satisfied.
\end{proposition}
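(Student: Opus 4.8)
\textbf{Proof plan for Proposition~\ref{propn1}.}

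The plan is to reduce everything to the scalar endpoint mapping and exploit that connected subsets of $\R$ are intervals. Since $n=1$ and $\UU = \L^\infty([0,T],\R^m)$, the endpoint map $\E$ is defined on all of $\L^\infty([0,T],\R^m)$, which is connected, and $\E$ is continuous (indeed $\C^1$); hence the $\L^\infty_\U$-accessible set $\E(\L^\infty([0,T],\U))$ is the continuous image of the connected set $\L^\infty([0,T],\U)$ (connected because $\U$ is convex, hence $\L^\infty([0,T],\U)$ is convex), so it is a connected subset of $\R$, i.e.\ an interval $I$. By hypothesis $x_u(T) \in \Int(I)$, so we may fix $a, b \in I$ with $a < x_u(T) < b$; pick controls $u_a, u_b \in \L^\infty([0,T],\U)$ with $\E(u_a) = a$ and $\E(u_b) = b$.

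Next I would fix an arbitrary partition $\T = \{t_i\}_{i=0,\ldots,N}$ of $[0,T]$ and approximate $u_a$ and $u_b$ by piecewise constant controls adapted to $\T$. The key tool is that $\PC^\T([0,T],\U)$ is $\L^1$-dense in $\L^\infty([0,T],\U)$ as $\Vert \T \Vert \to 0$ (averaging on each sampling interval keeps values in the convex set $\U$), combined with continuity of $\E$ with respect to the $\L^1$-topology on bounded subsets of $\L^\infty$ (this continuity is standard from Gronwall estimates, and is exactly the kind of estimate used throughout the paper). Therefore there is a threshold $\delta > 0$ such that for every partition $\T$ with $\Vert \T \Vert \le \delta$ there exist $v_a, v_b \in \PC^\T([0,T],\U)$ with $\E(v_a) < x_u(T) < \E(v_b)$ (taking the $\L^1$-approximations of $u_a, u_b$ close enough that the endpoint values stay on the correct sides of $x_u(T)$).

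Finally I would run a connectedness argument inside $\PC^\T([0,T],\U)$ itself: this set is convex (each sampling value ranges over the convex set $\U$ independently), hence connected, so $\E(\PC^\T([0,T],\U))$ is a connected subset of $\R$, i.e.\ an interval, and it contains both $\E(v_a) < x_u(T)$ and $\E(v_b) > x_u(T)$; therefore it contains $x_u(T)$. Since $\UU = \L^\infty([0,T],\R^m)$ we have $\PC^\T([0,T],\U) \subset \UU \cap \PC^\T([0,T],\U)$, so $x_u(T) \in \E(\UU \cap \PC^\T([0,T],\U))$, which is exactly Property~\eqref{eqPu}.

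I expect the only genuinely substantive step to be the second one: producing $\delta > 0$ uniformly, i.e.\ the quantitative statement that the $\L^1$-distance from $u_a$ (resp.\ $u_b$) to its best $\PC^\T$-approximation goes to $0$ uniformly in the partition as $\Vert \T \Vert \to 0$, together with the $\L^1$-to-endpoint continuity estimate; the bound $a < \E(v_a)$ and $\E(v_b) < b$ does not need uniformity in $u_a,u_b$ since those are fixed once and for all, so a Lusin/density argument followed by a Gronwall estimate suffices. The convexity of $\U$ is used twice in an essential way (connectedness of both $\L^\infty([0,T],\U)$ and $\PC^\T([0,T],\U)$), but as the footnote notes, connectedness of $\U$ is all that is really needed for the first use, and one can average against a fixed admissible value for the second.
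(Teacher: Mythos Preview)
Your proposal is correct and follows essentially the same route as the paper: pick controls $u_a,u_b$ with endpoints on either side of $x_u(T)$, approximate them by piecewise constant controls via an $\L^1$-density-plus-Gronwall argument (the paper packages this as a separate ``approximated reachability'' lemma, proved via a Lusin-type approximation and Lipschitz continuity of $\E$ on $\L^1$-balls with uniform $\L^\infty$-bound), and then invoke connectedness of $\PC^\T([0,T],\U)$ to apply the intermediate value property. The only cosmetic difference is that the paper does not bother to argue first that the full $\L^\infty_\U$-accessible set is an interval---it just uses that interior points in $\R$ have neighbors on both sides.
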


\subsection{Comments on Theorem~\ref{thmmain}}\label{secmainresultandcomments}
Let~$u \in \UU \cap \L^\infty ([0,T],\U)$. This section focuses on the condition that~$u$ is weakly~$\U$-regular. Example~\ref{example4} shows that it is not a sufficient for Property~\eqref{eqPu} in general. However note that our main result (Theorem~\ref{thmmain}) states that, when~$\U$ is convex, it is a sufficient condition for Property~\eqref{eqPu}. 

\begin{example}\label{example5}
Take~$T=18$, $n=2$, $m=1$, $\U = [-1,1]$ and~$f((x_1,x_2),u,t) = (x_2,u)$ for all~$((x_1,x_2),u,t) \in \R^2 \times \R \times [0,T]$. The target point~$x^1 = (0,0)$ is~$\L^\infty_\U$-reachable in time~$T$ from the starting point~$x^0 = (78,0)$ with the control~$u(t)=-1$ for a.e.\ $t \in [0,6]$, $u(t) = \frac{t-9}{3}$ for a.e.\ $t \in [6,12]$ and~$u(t)=1$ for a.e.\ $t \in [12,18]$. The control~$u$ is weakly~$\U$-regular by Remark~\ref{rem97}. Therefore, by Theorem~\ref{thmmain}, there exists~$\delta > 0$ such that~$x^1$ is~$\PC^\T_\U$-reachable in time~$T$ from~$x^0$ for any partition~$\T$ of~$[0,T]$ satisfying~$\Vert \T \Vert \leq \delta$.
\end{example}

In this paper we provide two different proofs of Theorem~\ref{thmmain}. A first proof is done in Section~\ref{sec_firstproof}, under the stronger condition that $u$ is strongly~$\U$-regular. This proof uses results of Section~\ref{sec_regwith} (in particular, conic~$\L^\infty$-perturbations of $u$) and, as explained in Remark~\ref{remfirstproof} further, we resort to truncated dynamics. In the second proof, given in Section~\ref{sec_secondproof}, we treat the case where~$u$ is assumed to be (only) weakly~$\U$-regular. This proof uses results of Section~\ref{sec_wregwith} (in particular, needle-like variations of $u$) and, as explained in Remark~\ref{remsecondproof}, we resort to the Brouwer fixed-point theorem. We think the two proofs are interesting, not only for pedagogical reasons but also because the different techniques that we introduce may be useful for other issues.
Note that both proofs use, at some step, the conic implicit function theorem~\cite[Theorem~1]{antoine1990} and averaging operators which project any integrable function onto a piecewise constant function. 

\begin{remark}\label{remfirstproof}
The first proof of Theorem~\ref{thmmain}, given in Section~\ref{sec_firstproof} under the strong~$\U$-regularity assumption, relies on the conic implicit function theorem~\cite[Theorem~1]{antoine1990}. However this theorem must be used in the Banach space~$\L^s([0,T],\R^m)$, for some~$1 < s < +\infty$, and not in~$\L^\infty([0,T],\R^m)$. This is because it is not true that any function in~$\L^\infty([0,T],\R^m)$ can be approximated in~$\L^\infty$-norm by piecewise constant functions, while it can be in~$\L^s$-norm with any~$1 \leq s < +\infty$ (see Appendix~\ref{app2}). This leads us to extend the end-point mapping to~$\L^s([0,T],\R^m)$ which makes no sense a priori because the control system~\eqref{contsys} is nonlinear.\footnote{For example, take~$n=m=1$, $s=2$ and~$f(x,u,t)=u^4$ for all~$(x,u,t) \in \R \times \R \times [0,T]$. Then considering~$\L^2$-controls makes no sense.} To overcome this difficulty, we introduce in Appendix~\ref{apptruncated} a truncated version of the dynamics~$f$, vanishing outside of a sufficiently large compact subset of~$\R^n \times \R^m$. Then the corresponding truncated end-point mapping is well defined on~$\L^s([0,T],\R^m)$, but is not Fr\'echet-differentiable when~$s=1$. However it is of class~$\C^1$ when~$1 < s < +\infty$ and the surjectivity of the differential of the truncated end-point mapping in~$\L^s$-norm can be related to the surjectivity of the differential in~$\L^\infty$-norm of the nontruncated end-point mapping. This is a key technical point in the first proof of Theorem~\ref{thmmain}.
\end{remark}

\begin{remark}\label{remsecondproof}
The second proof of Theorem~\ref{thmmain}, given in Section~\ref{sec_secondproof} under the weak~$\U$-regularity assumption, relies on the conic implicit function theorem~\cite[Theorem~1]{antoine1990} applied to the end-point mapping restricted to a multiple needle-like variation (as in the proof of Proposition~\ref{prop_wUreginterior}). This second proof of Theorem~\ref{thmmain} also uses the Brouwer fixed-point theorem. Like in~\cite[Lemma~3.1]{Grasse1981} or in~\cite[Lemma~7]{AgrachevCaponigro2010}, the main idea is that, under appropriate assumptions, local surjectivity of a continuous mapping is preserved under small perturbations. In our context, local surjectivity of the above restriction of the end-point mapping is preserved under the perturbation due to the composition with an averaging operator (see Appendix~\ref{app3}) which project any control with values in~$\U$ onto a piecewise constant control with values in~$\U$.
\end{remark}

\begin{remark}\label{remgrasse}
Theorem~\ref{thmmain} establishes robustness under control sampling of reachability \emph{in fixed time}. If one does not fix the final time, robustness under control sampling of reachability is already known, and this remark is dedicated to the remarkable series of papers~\cite{grasse1992,grasse1995,grasse1990,sussmann1987} by Grasse and Sussmann (see also references therein) on reachability and controllability with piecewise constant controls. 
\begin{enumerate}[label=\rm{(\roman*)}]
\item It is established in~\cite[Theorem~4.2]{sussmann1987} that normal reachability of a target point in the state space implies normal reachability with a piecewise constant control. Roughly speaking, normal reachability is reachability under a surjectivity assumption which is similar to the notion of regularity considered in the present work.
\item With another point of view (not based on a surjectivity property), it is established in~\cite[Theorem~3.17]{grasse1990} that, under global controllability, the controllability can be achieved with piecewise constant controls.
\item In~\cite[Remark~3.5]{grasse1992} it is noted that if a point of the state space is normally reachable in time less than~$T$, then it belongs to the interior of the reachable set with piecewise-constant controls in time less than~$T$. 
\item It is proved in~\cite[Corollary~4.4]{grasse1995} that, if the initial condition belongs to the interior of the reachable set, then this reachable set coincides with the reachable set with piecewise constant controls. 
\end{enumerate}
Our main result (Theorem~\ref{thmmain}) differs from the above results for two reasons.  First, as underlined above, the final time~$T $ is fixed in our work, while it is not in the abovementioned references. For instance, in ~\cite[Theorem~4.2]{sussmann1987}, normal reachability with a piecewise constant control is established, but a priori for a different final time~$T' $ (and indeed, inspection of the proof shows that, in general,~$T'\neq T$). Second, our main result (Theorem~\ref{thmmain}) states the existence of a threshold~$\delta > 0$ for which reachability (exactly at time~$T$) of a target point with a piecewise constant control is guaranteed for any partition~$\T$ satisfying~$\Vert \T \Vert \leq \delta$. The existence of this threshold (which is not considered in the abovementioned works) is of particular interest when considering refinements of partitions (for convergence results for instance) and for robustness of reachability under small perturbations of the partition (see Proposition~\ref{propsensitive}). Furthermore, since the inclusion~$\subset$ is not a total order over~$\PP$, it may occur that~$\Vert \T_2 \Vert \leq \Vert \T_1 \Vert$ while~$\T_1 \not\subset \T_2$. In the above references, it is not guaranteed that reachability of a target point with a~$\T_1$-piecewise constant control implies reachability with a~$\T_2$-piecewise constant control. With the conclusion of Theorem~\ref{thmmain}, when~$\Vert \T_1 \Vert \leq \delta$, it is guaranteed.
\end{remark}

\begin{remark}\label{remequalityinequalityU}
The convexity assumption made on~$\U$ in Theorem~\ref{thmmain} can be relaxed. Indeed let us prove that Theorem~\ref{thmmain} is still true when~$\U$ is assumed to be (only) \textit{convex by parameterization}, i.e., when~$\U$ is parameterizable (see Definition~\ref{def_paramU}) by a nonempty convex subset~$\U'$ of~$\R^{m'}$ for some~$m' \in \N^*$ (see examples in Remark~\ref{remequalityinequalityU2}). In that context, for a control~$u \in \UU \cap \L^\infty([0,T],\U)$ that is weakly~$\U$-regular, there exists~$u' \in \UU' \cap \L^\infty([0,T],\U')$ such that~$u = \varphi \circ u'$ and~$u'$ is weakly~$\U'$-regular for the control system~\eqref{contsysprime}. Since~$\U'$ is convex, there exists by Theorem~\ref{thmmain} a threshold~$\delta > 0$ such that~$x'_{u'}(T) = x'_{v'_\T}(T)$, for some~$v'_\T \in \UU' \cap \PC^\T([0,T],\U')$, for all partitions~$\T$ satisfying~$\Vert \T \Vert \leq \delta$. Introducing~$v_\T = \varphi \circ v'_\T \in \UU \cap  \PC^\T([0,T],\U)$, we obtain that~$x_u(T) = x'_{u'}(T) = x'_{v'_\T}(T) = x_{v_\T}(T)$ for all partitions~$\T$ satisfying~$\Vert \T \Vert \leq \delta$.
\end{remark}

\begin{remark}\label{remequalityinequalityU2}
If~$\U$ is convex by parameterization (see Remark~\ref{remequalityinequalityU}), then~$\U$ must be connected. Actually a quite large class of connected sets are convex by parameterization. For example, in the two-dimensioncal case~$m=2$, the unit circle~$\U = \{ (u_1,u_2) \in \R^2 \mid u_1^2 + u_2^2 = 1 \}$, the donut-shaped set~$\U = \{ (u_1,u_2) \in \R^2 \mid 1 \leq u_1^2 + u_2^2 \leq 4 \}$ or the cross-shaped set~$\U = ( [-1,1] \times \{ 0 \} ) \cup ( \{ 0 \} \times [-1,1])$ are nonconvex connected sets that are convex by parameterization. For these sets, the conclusion of Theorem~\ref{thmmain} holds true. However, adapting Example~\ref{example4}, note that the conclusion of Theorem~\ref{thmmain} fails in general if~$\U$ is \textit{strongly nonconnected}, i.e., when it can be written as~$\U = \U_1 \cup \U_2$, where~$\U_1$ and~$\U_2$ are nonempty, and there exists a~$\C^1$ mapping~$\Theta : \R^m \to \R$ taking the value~$0$ on~$\U_1$ and the value~$1$ on~$\U_2$.\footnote{For example, when~$m=1$, the set~$(-\infty,0] \cup [1,+\infty)$ is strongly nonconnected, while the set~$(-\infty,0) \cup (0,+\infty)$ is nonconnected (but not strongly).} An open question is to extend Theorem~\ref{thmmain} to sets~$\U$ that are neither convex by parameterization, nor strongly nonconnected. We emphasize that our proof of Theorem~\ref{thmmain}, when~$\U$ is convex, uses the averaging operators introduced in Appendix~\ref{app3}, which project any control with values in~$\U$ onto a piecewise constant control with values in~$\U$ (see Proposition~\ref{propaverage3}). When $\U$ is not convex, one has to consider other operators: one way may be to follow the approach based on the Lusin theorem~\cite{lusin1912} as developed in Appendix~\ref{app2}.
\end{remark}

\begin{remark}\label{rem_relaxregularityonf}
Several statements in the present paper do not require that the dynamics~$f$ is of class~$\C^1$ with respect to~$u$. Actually this assumption is required (only) when~$\nabla_u f$ has to be considered (such as in Sections~\ref{sec_regwithout} and~\ref{sec_regwith} where we use conic~$\L^\infty$-perturbations). When using needle-like variations (which are~$\L^1$-perturbations) such as in Section~\ref{sec_wregwith}, it is only required that~$f$ is of class~$\C^1$ with respect to~$x$ and is Lipschitz continuous with respect to~$(x,u)$ on any compact subset of~$\R^n \times \R^m \times [0,T]$. In particular the conclusion of Theorem~\ref{thmmain} remains true in that context.\footnote{By Remark~\ref{rem_relaxregularityonf}, Definition~\ref{def_paramU} (resp., the notion of strongly nonconnected set introduced in Remark~\ref{remequalityinequalityU2}) can be relaxed by considering a mapping~$\varphi$ (resp., $\Theta$) that is (only) Lipschitz continuous on any compact subset of~$\R^{m'}$ (resp., of~$\R^m$).}
\end{remark}

\begin{remark}\label{rem_conenonimage}
As far as we know, the $\U$-Pontryagin cone of a control~$u \in \UU \cap \L^\infty([0,T],\U)$ cannot be written as the range of a differential~$\D \E (u)$ taken in an appropriate sense. Indeed, we explain in Section~\ref{secproof_wUreginterior} how~$\Pont_\U[u]$ can be generated using multiple needle-like variations which are~$\L^s$-perturbations for any~$1 \leq s < +\infty$. Nevertheless, even using truncated dynamics in order to work in~$\L^s([0,T],\R^m)$ for some~$1 \leq s < +\infty$, we explain in Appendix~\ref{apptruncated} that the truncated end-point mapping is not Fr\'echet-differentiable when~$s=1$ and, when~$1 < s < +\infty$, the Fr\'echet differential of the truncated end-point mapping generates (only) weak $\U$-variation vectors. We conclude this comment by referring to the work of Gamkrelidze in~\cite{Gamkrelidze1978} in which classical controls are embedded in the set of Radon measures. With this nonstandard approach, it is proved that~$\Pont_\U[u]$ is contained in the range of the differential of the end-point mapping considered on the set of Radon measures. Unfortunately the above embedding has a convexification effect on the dynamics~$f$ and, as a result, the inclusion is (only) strict in general.
\end{remark}

\section{Proofs of results of Section~\ref{sec_recap}}\label{secproofs1}

This section is dedicated to proving the results of Section~\ref{sec_recap}. Most of the following proofs are known in the literature. They are recalled here because the techniques and results developed hereafter will be helpful at several occasions in Section~\ref{secproofs2} (devoted to proving the new results presented in Section~\ref{sec_comments}).

In what follows, when~$(\ZZ,\d_\ZZ)$ is a metric set, we denote by~$\B_{\ZZ}(z,\rho)$ (resp.\ $\BB_{\ZZ}(z,\rho)$) the  open ball (resp.\ closed ball) centered at some~$z \in \ZZ$ of some radius~$\rho \geq 0$.

\subsection{Proof of Proposition~\ref{prop_reginterior}}\label{secproof_reginterior}

Let~$u \in \UU$ be strongly regular. By Definition~\ref{def_reg} there exists a~$n$-tuple~$\bbar{v} = \{ v_j  \}_{j=1,\ldots,n}$ of elements of~$\L^\infty([0,T],\R^m)$ such that $\D\E(u)  \cdot v_j = e_j$ for all $j \in \{ 1,\ldots,n \}$, where~$\{ e_j \}_{j=1,\ldots,n}$ is the canonical basis of~$\R^n$. We define the mapping~$\Phi : \R^n \times [-\beta,\beta]^{n} \longrightarrow \R^n $ by
$$ 
\Phi(z,\bbar{\alpha}) =  \E \bigg( u +\di \sum_{j=1}^{n} \alpha_j v_j \bigg) - z
$$ 
for all~$(z,\bbar{\alpha}) \in \R^n \times [-\beta,\beta]^{n}$, where~$\beta > 0$ is small enough to guarantee that~$u + \sum_{j=1}^{n} \alpha_j v_j \in \UU$ for all~$\bbar{\alpha} \in [-\beta,\beta]^{n}$, which is possible because~$\UU$ is an open subset of~$\L^\infty([0,T],\R^m)$. The mapping~$\Phi$ is of class~$\C^1$ and satisfies~$\Phi(x_u(T),0_{\R^{n}}) = 0_{\R^n}$ and~$\frac{\partial \Phi}{\partial \bbar{\alpha}} (x_u(T),0_{\R^{n}})  = \mathrm{Id}_{\R^n}$ which is invertible. By the implicit function theorem, there exists an open neighborhood~$\VV$ of~$x_u(T)$ and a~$\C^1$ mapping~$\bbar{\alpha} : \VV \to [-\beta,\beta]^{n}$ satisfying~$\bbar{\alpha} ( x_u(T) ) = 0_{\R^n}$ and~$\Phi(z , \bbar{\alpha} (z) ) = 0_{\R^n}$ for all~$z \in \VV$. Then it suffices to introduce the~$\C^1$ mapping~$V : \VV \to \UU$ defined by~$V(z) = u + \sum_{j=1}^{n} \alpha_j (z) v_j$ for all~$z \in \VV$.

\subsection{Proof of Proposition~\ref{prop_charactsing}}\label{secproof_charactsing}


\begin{lemma}\label{lemcharactweakextremal}
Let $u \in \UU$ and~$p \in \AC ([0,T],\R^n)$ be a solution to~\eqref{eqAE}. The following statements are equivalent:
\begin{enumerate}[label=\rm{(\roman*)}]
\item\label{lemcharactweakextremal_i} $(x_u,u,p)$ is a weak extremal lift of the pair~$(x_u,u)$;
\item\label{lemcharactweakextremal_iii} $\langle p(T) , \D\E(u) \cdot v \rangle_{\R^n} = 0$ for all~$v \in \L^\infty([0,T],\R^m)$.
\end{enumerate}
\end{lemma}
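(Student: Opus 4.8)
The plan is to prove the equivalence by unwinding the definition of weak extremal lift and comparing it with the expression~\eqref{eq1} for the Fréchet differential. The key tool will be the classical ``integration by parts'' (or duality) identity between the linearized state equation defining~$w^u_v$ and the adjoint equation~\eqref{eqAE}. Concretely, for $u \in \UU$, $v \in \L^\infty([0,T],\R^m)$, and $p$ a solution of~\eqref{eqAE}, I would compute $\frac{\d}{\d t} \langle p(t), w^u_v(t) \rangle_{\R^n}$ using the two differential equations: since $\dot w^u_v(t) = \nabla_x f(x_u(t),u(t),t) w^u_v(t) + \nabla_u f(x_u(t),u(t),t) v(t)$ and $\dot p(t) = -\nabla_x H(x_u(t),u(t),p(t),t) = -\nabla_x f(x_u(t),u(t),t)^\top p(t)$, the terms involving $\nabla_x f$ cancel, leaving $\frac{\d}{\d t}\langle p(t), w^u_v(t)\rangle_{\R^n} = \langle p(t), \nabla_u f(x_u(t),u(t),t) v(t)\rangle_{\R^n} = \langle \nabla_u H(x_u(t),u(t),p(t),t), v(t)\rangle_{\R^m}$. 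Integrating over $[0,T]$ and using $w^u_v(0) = 0_{\R^n}$ together with~\eqref{eq1} yields the fundamental identity
\begin{equation*}
\langle p(T), \D\E(u)\cdot v \rangle_{\R^n} = \int_0^T \langle \nabla_u H(x_u(t),u(t),p(t),t), v(t)\rangle_{\R^m}\, \d t
\end{equation*}
valid for all $v \in \L^\infty([0,T],\R^m)$.

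From this identity the equivalence is immediate in both directions. For \ref{lemcharactweakextremal_i}$\Rightarrow$\ref{lemcharactweakextremal_iii}: if $(x_u,u,p)$ is a weak extremal lift then \eqref{eqNHG} holds, i.e.\ $\nabla_u H(x_u(t),u(t),p(t),t) = 0_{\R^m}$ for a.e.\ $t$, so the right-hand side of the identity vanishes for every $v$, giving \ref{lemcharactweakextremal_iii}. For \ref{lemcharactweakextremal_iii}$\Rightarrow$\ref{lemcharactweakextremal_i}: if the left-hand side vanishes for all $v \in \L^\infty([0,T],\R^m)$, then $\int_0^T \langle \nabla_u H(x_u(t),u(t),p(t),t), v(t)\rangle_{\R^m}\, \d t = 0$ for every essentially bounded measurable $v$; by the fundamental lemma of the calculus of variations (taking, e.g., $v(t) = \nabla_u H(x_u(t),u(t),p(t),t)$, which is essentially bounded since $x_u$, $u$, $p$ are bounded and $f$ is $\C^1$) this forces $\nabla_u H(x_u(t),u(t),p(t),t) = 0_{\R^m}$ for a.e.\ $t \in [0,T]$, which is precisely~\eqref{eqNHG}; since $p$ is assumed to solve~\eqref{eqAE}, the triple $(x_u,u,p)$ is a weak extremal lift.

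I do not expect a genuine obstacle here — this is a standard computation. The only points requiring a little care are: verifying that $t \mapsto \langle p(t), w^u_v(t)\rangle_{\R^n}$ is absolutely continuous so that the fundamental theorem of calculus applies (both factors are in $\AC$, hence so is their scalar product, with the product rule valid a.e.); and correctly identifying $\nabla_x H(x,u,p,t) = \nabla_x f(x,u,t)^\top p$ and $\nabla_u H(x,u,p,t) = \nabla_u f(x,u,t)^\top p$ from the definition $H(x,u,p,t) = \langle p, f(x,u,t)\rangle_{\R^n}$, so that the cancellation of the $\nabla_x$-terms and the emergence of the $\nabla_u H$-term are transparent. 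Everything else is bookkeeping, and the fundamental lemma of the calculus of variations is applied on the full-measure set where the pointwise identities hold.
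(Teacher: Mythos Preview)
Your proof is correct and follows essentially the same approach as the paper: both define (implicitly or explicitly) the function $t \mapsto \langle p(t), w^u_v(t)\rangle_{\R^n}$, compute its derivative using~\eqref{eqAE} to obtain the key identity $\langle p(T), \D\E(u)\cdot v\rangle_{\R^n} = \int_0^T \langle \nabla_u H, v\rangle_{\R^m}\,\d t$, and then argue each direction exactly as you do. Your write-up is in fact slightly more explicit about the cancellation of the $\nabla_x f$ terms and about the specific choice $v = \nabla_u H$ in the converse direction.
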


\begin{proof}
We set~$h_v(t) = \langle p(t) , w^u_{v}(t) \rangle_{\R^n}$ for all~$t \in [0,T]$ and all~$v \in \L^\infty([0,T],\R^m)$, where~$w^u_{v}$ is defined after~\eqref{eq1}. Therefore~\ref{lemcharactweakextremal_iii} is equivalent to~$h_v(T) = 0$ for all~$ v \in \L^\infty([0,T],\R^m)$. For all~$v \in \L^\infty([0,T],\R^m)$, note that~$h_v(0)=0$ and, using the adjoint equation~\eqref{eqAE}, that
$$ 
\dot{h}_v(t) = \langle \nabla_u H(x_u(t),u(t),p(t),t) , v(t) \rangle_{\R^m}  
$$ 
for a.e.\ $t \in [0,T]$. Now let us to prove that~\ref{lemcharactweakextremal_i} is equivalent to~\ref{lemcharactweakextremal_iii}. First let us assume~\ref{lemcharactweakextremal_i}. From the null Hamiltonian gradient condition~\eqref{eqNHG}, we have~$\dot{h}_v (t) = 0$ for a.e.\ $t \in [0,T]$ and thus~$h_v(T) = h_v(0)= 0$ for all~$v \in \L^\infty([0,T],\R^m)$, which gives~\ref{lemcharactweakextremal_iii}. Now, assuming~\ref{lemcharactweakextremal_iii}, we have~
$
\int_0^T \langle \nabla_u H (x_u(t),u(t),p(t),t) , v(t) \rangle_{\R^m} dt = h_v(T) =  0
$
for every~$v \in \L^\infty([0,T],\R^m)$. We deduce the null Hamiltonian gradient condition~\eqref{eqNHG}, which gives~\ref{lemcharactweakextremal_i}.
\end{proof}

Let us prove Proposition~\ref{prop_charactsing}. Let~$u \in \UU$. First, assume that~$u$ is~weakly singular, i.e., $\mathrm{Ran}(\D\E(u))$ is a proper subspace of~$\R^n$. Hence there exists~$\psi \in \R^n \backslash \{ 0_{\R^n} \}$ such that~$\langle \psi , \D \E(u) \cdot v \rangle_{\R^n} = 0$ for all~$ v \in \L^\infty([0,T],\R^m)$. Considering~$p \in \AC([0,T],\R^n)$ the unique solution to~\eqref{eqAE} ending at~$p(T) = \psi$ (in particular~$p$ is not trivial), we obtain that~$\langle p(T) , \D \E(u) \cdot v \rangle_{\R^n} = 0$ for all~$ v \in \L^\infty([0,T],\R^m)$. By Lemma~\ref{lemcharactweakextremal}, $(x_u,u,p)$ is a nontrivial weak extremal lift of~$(x_u,u)$. Conversely, assume that~$u$ is strongly regular, i.e., $\mathrm{Ran}(\D\E(u))= \R^n$. By contradiction let us assume that~$(x_u,u)$ admits a nontrivial weak extremal lift~$(x_u,u,p)$. Then there exists~$v \in \L^\infty([0,T],\R^m)$ such that~$ \D\E(u) \cdot v = p(T)$. It follows from Lemma~\ref{lemcharactweakextremal} that~$ \Vert p(T) \Vert_{\R^n}^2 = 0$ and thus~$p(T) = 0_{\R^n}$. Since the adjoint equation~\eqref{eqAE} is linear, it follows that~$p$ is trivial, which raises a contradiction.

\subsection{Proof of Proposition~\ref{prop_Ureginterior}}\label{secproof_Ureginterior}

\begin{lemma}\label{lemconetangent}
Assume that~$\U$ is convex and let~$u \in \L^\infty([0,T],\U)$. We have
\begin{multline*}
\TT_{\L^\infty_\U}[u] = \big\{ v \in \L^\infty([0,T],\R^m)  \mid \\
\exists \beta > 0, \; u + \beta v \in \L^\infty([0,T],\U) \big\} . 
\end{multline*}
Furthermore, for every~$J \in \N^*$, we have
$$ 
u + \sum_{j=1}^J \alpha_j v_j \in \L^\infty([0,T],\U) 
$$ 
for every~$\alpha_j \in [0,\frac{\beta_j}{J}]$, where~$v_j \in \TT_{\L^\infty_\U}[u]$ and~$\beta_j > 0$ is such that~$u + \beta_j v_j \in \L^\infty([0,T],\U)$ for every~$j \in \{ 1,\ldots,J \}$.
\end{lemma}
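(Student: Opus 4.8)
The plan is to prove the two assertions of Lemma~\ref{lemconetangent} in order, using only convexity of~$\U$ and the definition~$\TT_{\L^\infty_\U}[u] = \R_+(\L^\infty([0,T],\U) - u)$.

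For the first equality, I would prove the two inclusions. If~$v \in \TT_{\L^\infty_\U}[u]$, then by definition there exist~$\lambda \geq 0$ and~$w \in \L^\infty([0,T],\U)$ with~$v = \lambda(w - u)$. If~$\lambda = 0$ then~$v = 0$ and~$u + \beta v = u \in \L^\infty([0,T],\U)$ for any~$\beta > 0$; if~$\lambda > 0$, take~$\beta = 1/\lambda$, so that~$u + \beta v = w \in \L^\infty([0,T],\U)$. This gives~$v$ in the right-hand set. Conversely, if~$u + \beta v \in \L^\infty([0,T],\U)$ for some~$\beta > 0$, write~$v = \frac{1}{\beta}\big((u + \beta v) - u\big) \in \R_+(\L^\infty([0,T],\U) - u) = \TT_{\L^\infty_\U}[u]$.

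For the second assertion, fix~$J \in \N^*$ and~$v_j, \beta_j$ as in the statement, and take~$\alpha_j \in [0, \beta_j/J]$. The idea is to write~$u + \sum_{j=1}^J \alpha_j v_j$ as a convex combination of the points~$u$ and~$u + \beta_j v_j$, all of which lie in~$\L^\infty([0,T],\U)$, and then invoke that~$\L^\infty([0,T],\U)$ is convex (pointwise, since~$\U$ is convex). Concretely, set~$\mu_j = \alpha_j/\beta_j \in [0, 1/J]$ for each~$j$ (handling~$\beta_j > 0$; note~$\alpha_j v_j = \mu_j(\beta_j v_j) = \mu_j\big((u + \beta_j v_j) - u\big)$), and~$\mu_0 = 1 - \sum_{j=1}^J \mu_j$, which is~$\geq 1 - J\cdot\frac1J = 0$, so that~$(\mu_0, \mu_1, \dots, \mu_J)$ are nonnegative coefficients summing to~$1$. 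Then
$$
\mu_0 u + \sum_{j=1}^J \mu_j (u + \beta_j v_j) = \Big(\mu_0 + \sum_{j=1}^J \mu_j\Big) u + \sum_{j=1}^J \mu_j \beta_j v_j = u + \sum_{j=1}^J \alpha_j v_j,
$$
which exhibits the left-hand side as a convex combination of elements of~$\L^\infty([0,T],\U)$, hence it belongs to~$\L^\infty([0,T],\U)$ by convexity of~$\U$.

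There is no serious obstacle here; the only mild care needed is the bookkeeping ensuring~$\mu_0 \geq 0$ (which is exactly why the bound~$\alpha_j \leq \beta_j/J$, rather than~$\alpha_j \leq \beta_j$, appears) and the observation that membership in~$\L^\infty([0,T],\U)$ is a pointwise (a.e.) condition, so convexity of the function space follows immediately from convexity of~$\U$. I would also note in passing that the first equality shows~$\TT_{\L^\infty_\U}[u]$ is indeed a convex cone, consistent with the parenthetical remark in Definition~\ref{def_Ureg}.
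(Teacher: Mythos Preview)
Your proof is correct and is exactly the natural argument one would write; the paper itself gives no proof, simply stating ``Lemma~\ref{lemconetangent} is obvious.'' Your convex-combination bookkeeping (introducing $\mu_j=\alpha_j/\beta_j$ and $\mu_0=1-\sum_j\mu_j\geq 0$) is the standard way to make that obviousness explicit.
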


Lemma~\ref{lemconetangent} is obvious. Assume that~$\U$ is convex and let us prove Proposition~\ref{prop_Ureginterior}. Let~$u \in \UU \cap \L^\infty([0,T],\U)$ be strongly~$\U$-regular. By Definition~\ref{def_Ureg}, there exists a $2n$-tuple~$\bbar{v} = \{ v_j  \}_{j=1,\ldots,2n}$ of elements of~$\TT_{\L^\infty_\U}[u]$ such that 
\begin{equation}\label{condtfic0}
\D\E(u)  \cdot v_j = e_j  \quad \text{and} \quad \D\E(u)  \cdot v_{n+j} = -e_j 
\end{equation}
for every $j \in \{ 1,\ldots,n \}$, where~$\{ e_j \}_{j=1,\ldots,n}$ is the canonical basis of~$\R^n$. We define the map
$\Phi : \R^n \times [0,\beta]^{2n} \longrightarrow \R^n $
by
$$
\Phi(z,\bbar{\alpha}) =  \E \bigg( u + \di \sum_{j=1}^{2n} \alpha_j v_j \bigg) - z 
$$
for all~$(z,\bbar{\alpha}) \in \R^n \times [0,\beta]^{2n}$, where~$\beta > 0$ is small enough to guarantee that~$u + \sum_{j=1}^{2n} \alpha_j v_j \in \UU \cap \L^\infty([0,T],\U)$ for every~$\bbar{\alpha} \in [0,\beta]^{2n}$, which is possible by Lemma~\ref{lemconetangent} and because~$\UU$ is an open subset of~$\L^\infty([0,T],\R^m)$. The mapping~$\Phi$ is of class~$\C^1$ and satisfies~$\Phi(x_u(T),0_{\R^{2n}}) = 0_{\R^n}$ and~$\frac{\partial \Phi}{\partial \bbar{\alpha}} (x_u(T),0_{\R^{2n}}) \cdot \R^{2n}_+  = \R^n$ thanks to~\eqref{condtfic0}. From the conic implicit function theorem~\cite[Theorem~1]{antoine1990}, there exists an open neighborhood~$\VV$ of~$x_u(T)$ and a continuous mapping~$\bbar{\alpha} : \VV \to [0,\beta]^{2n}$ satisfying~$\bbar{\alpha} ( x_u(T) ) = 0_{\R^{2n}}$ and~$\Phi(z , \bbar{\alpha} (z) ) = 0_{\R^n}$ for all~$z \in \VV$. Then it suffices to introduce the continuous mapping~$V : \VV \to \UU \cap \L^\infty([0,T],\U)$ defined by~$V(z) = u + \sum_{j=1}^{2n} \alpha_j (z) v_j$ for all~$z \in \VV$.

\subsection{Proof of Proposition~\ref{prop_charactUsing}}\label{secproof_charactUsing}


\begin{lemma}\label{lemcharactweakUextremal}
Assume that~$\U$ is convex. Let $u \in \UU \cap \L^\infty([0,T],\U)$ and~$p \in \AC ([0,T],\R^n)$ be a solution to~\eqref{eqAE}. The following statements are equivalent:
\begin{enumerate}[label=\rm{(\roman*)}]
\item\label{lemcharactweakUextremal_i} $(x_u,u,p)$ is a weak $\U$-extremal lift of the pair~$(x_u,u)$;
\item\label{lemcharactweakUextremal_iii} $\langle p(T) , \D\E(u) \cdot (v-u) \rangle_{\R^n} \leq 0$ for all~$ v \in \L^\infty([0,T],\U)$;
\item\label{lemcharactweakUextremal_iv} $\langle p(T) , \D\E(u) \cdot v \rangle_{\R^n} \leq 0$ for all~$v \in \TT_{\L^\infty_\U}[u]$.
\end{enumerate}
\end{lemma}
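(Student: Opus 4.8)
The plan is to establish the chain of equivalences \ref{lemcharactweakUextremal_i} $\Leftrightarrow$ \ref{lemcharactweakUextremal_iv} and \ref{lemcharactweakUextremal_iii} $\Leftrightarrow$ \ref{lemcharactweakUextremal_iv}, reusing the computational core of Lemma~\ref{lemcharactweakextremal}. As in that lemma, for $v \in \L^\infty([0,T],\R^m)$ I would set $h_v(t) = \langle p(t), w^u_v(t)\rangle_{\R^n}$, so that $h_v(0) = 0$, $h_v(T) = \langle p(T), \D\E(u)\cdot v\rangle_{\R^n}$, and, using the adjoint equation~\eqref{eqAE}, $\dot h_v(t) = \langle \nabla_u H(x_u(t),u(t),p(t),t), v(t)\rangle_{\R^m}$ for a.e.\ $t$. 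Integrating gives the master identity $\langle p(T), \D\E(u)\cdot v\rangle_{\R^n} = \int_0^T \langle \nabla_u H(x_u(t),u(t),p(t),t), v(t)\rangle_{\R^m}\,\d t$, valid for every $v \in \L^\infty([0,T],\R^m)$; since $\D\E(u)$ is linear, applying this with $v-u$ in place of $v$ (for $v \in \L^\infty([0,T],\U)$) and with $v \in \TT_{\L^\infty_\U}[u]$ directly is also legitimate.

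For \ref{lemcharactweakUextremal_iii} $\Leftrightarrow$ \ref{lemcharactweakUextremal_iv}: this is essentially a restatement of the definition $\TT_{\L^\infty_\U}[u] = \R_+(\L^\infty([0,T],\U) - u)$ together with homogeneity. Every element of $\TT_{\L^\infty_\U}[u]$ has the form $\lambda(v-u)$ with $\lambda \geq 0$ and $v \in \L^\infty([0,T],\U)$, so \ref{lemcharactweakUextremal_iv} $\Rightarrow$ \ref{lemcharactweakUextremal_iii} is immediate (take $\lambda = 1$) and \ref{lemcharactweakUextremal_iii} $\Rightarrow$ \ref{lemcharactweakUextremal_iv} follows by scaling since the inequality is positively homogeneous in $v$. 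The convexity of $\U$ is what makes this cone description correct, but here I only need the tautological one-line argument.

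For \ref{lemcharactweakUextremal_i} $\Leftrightarrow$ \ref{lemcharactweakUextremal_iv}: assume \ref{lemcharactweakUextremal_i}, so $\nabla_u H(x_u(t),u(t),p(t),t) \in \NN_\U[u(t)]$ for a.e.\ $t$. Given $v \in \TT_{\L^\infty_\U}[u]$, write $v = \lambda(\omega - u)$ with $\lambda \geq 0$ and $\omega \in \L^\infty([0,T],\U)$; then for a.e.\ $t$ the integrand $\langle \nabla_u H(\cdots), v(t)\rangle_{\R^m} = \lambda \langle \nabla_u H(\cdots), \omega(t) - u(t)\rangle_{\R^m} \leq 0$ by definition of the normal cone (applied at $u(t)$ with the competitor $\omega(t) \in \U$), and the master identity yields \ref{lemcharactweakUextremal_iv}. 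The converse is the main obstacle — a pointwise-versus-integral "fundamental lemma of the calculus of variations" type argument, complicated by the fact that the relevant test functions $v$ must lie in the cone $\TT_{\L^\infty_\U}[u]$ rather than being arbitrary. Suppose \ref{lemcharactweakUextremal_iv} holds but \eqref{eqHG} fails. Then there is a set $S \subset [0,T]$ of positive measure and some $\omega_0 \in \U$ with $\langle \nabla_u H(x_u(t),u(t),p(t),t), \omega_0 - u(t)\rangle_{\R^m} > 0$ for $t \in S$; one can further shrink $S$ (by a countable exhaustion over a dense sequence of $\U$ and over rational thresholds) so that a single $\omega_0$ and a single $\eta > 0$ work on all of $S$. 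Taking $v = \mathbbm{1}_S\,(\omega_0 - u)$, which is a bounded measurable function lying in $\TT_{\L^\infty_\U}[u]$ (its support is $S$, and $u + \mathbbm{1}_S(\omega_0-u)$ still takes values in $\U$ by convexity), the master identity gives $\langle p(T),\D\E(u)\cdot v\rangle_{\R^n} = \int_S \langle \nabla_u H(\cdots), \omega_0 - u(t)\rangle_{\R^m}\,\d t \geq \eta\,|S| > 0$, contradicting \ref{lemcharactweakUextremal_iv}. This establishes \ref{lemcharactweakUextremal_iv} $\Rightarrow$ \ref{lemcharactweakUextremal_i} and completes the equivalences.
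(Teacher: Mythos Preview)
Your argument is correct and largely parallels the paper's: the same function $h_v$ and ``master identity'' are used, and the equivalence \ref{lemcharactweakUextremal_iii}\,$\Leftrightarrow$\,\ref{lemcharactweakUextremal_iv} as well as the implication \ref{lemcharactweakUextremal_i}\,$\Rightarrow$\,\ref{lemcharactweakUextremal_iii}/\ref{lemcharactweakUextremal_iv} are handled identically.

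The one genuine difference is in the converse direction \ref{lemcharactweakUextremal_iii}/\ref{lemcharactweakUextremal_iv}\,$\Rightarrow$\,\ref{lemcharactweakUextremal_i}. The paper localizes via needle-like variations: for each Lebesgue point~$\tau$ of $\nabla_u H(x_u,u,p,\cdot)$ and of $\langle \nabla_u H(x_u,u,p,\cdot),u\rangle_{\R^m}$ and each $\omega\in\U$, it plugs $v=u^\alpha_{(\tau,\omega)}$ into the integral inequality, divides by $\alpha$, and passes to the limit $\alpha\to 0^+$ using the Lebesgue differentiation theorem to recover the pointwise condition~\eqref{eqHG}. You instead argue by contradiction with a characteristic-function test: exploiting the separability of $\U\subset\R^m$, you extract a single $\omega_0\in\U$ and a positive-measure set $S$ on which the integrand is uniformly positive, then take $v=\mathbbm{1}_S(\omega_0-u)\in\TT_{\L^\infty_\U}[u]$. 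Both routes are standard; yours avoids the Lebesgue-point machinery at the cost of the separability/exhaustion step, while the paper's choice dovetails with the needle-like variations used systematically elsewhere (Sections~\ref{sec_wregwith} and~\ref{secproof_wUreginterior}). A minor remark: the convexity of~$\U$ is not actually needed to check that $u+\mathbbm{1}_S(\omega_0-u)$ takes values in~$\U$ (it equals $\omega_0$ on $S$ and $u$ elsewhere), so your parenthetical ``by convexity'' is harmless but unnecessary there.
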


\begin{proof}
The equivalence between~\ref{lemcharactweakUextremal_iii} and~\ref{lemcharactweakUextremal_iv} follows from the definition of~$\TT_{\L^\infty_\U}[u]$ (see Definition~\ref{def_Ureg}). Note that~\ref{lemcharactweakUextremal_iii} is equivalent to~$h_{v-u}(T) \leq 0$ for all~$ v \in \L^\infty([0,T],\U)$ (see the definition of~$h_{v-u}$ in the proof of Lemma~\ref{lemcharactweakextremal}). Now let us prove that~\ref{lemcharactweakUextremal_i} is equivalent to~\ref{lemcharactweakUextremal_iii}. First let us assume~\ref{lemcharactweakUextremal_i}. We infer from the Hamiltonian gradient condition~\eqref{eqHG} that~$\dot{h}_{v-u} (t) \leq 0$ for a.e.\ $t \in [0,T]$ and thus~$h_{v-u}(T) \leq h_{v-u}(0)= 0$ for all~$v \in \L^\infty([0,T],\U)$, which gives~\ref{lemcharactweakUextremal_iii}. Now, assuming~\ref{lemcharactweakUextremal_iii}, we have~$\int_0^T \langle \nabla_u H (x_u(t),u(t),p(t),t) , v(t)-u(t) \rangle_{\R^m} dt = h_{v-u} (T) \leq 0$
for every $v \in \L^\infty([0,T],\U)$. Then, for any Lebesgue point~$\t \in [0,T)$ of~$\nabla_u H (x_u,u,p,\cdot) \in \L^\infty([0,T],\R^m)$ and of~$\langle \nabla_u H (x_u,u,p,\cdot),u \rangle_{\R^m} \in \L^\infty([0,T],\R)$ and for any~$\omega \in \U$, taking the needle-like variation~$v=u^\alpha_{(\t,\omega)}  \in \L^\infty([0,T],\U)$ as defined in~\eqref{eq_singleneedle}, we get that~$
\frac{1}{\alpha} \int_\t^{\t+\alpha} \langle \nabla_u H (x_u(t),u(t),p(t),t) , \omega-u(t) \rangle_{\R^m} dt \leq 0
$
for every~$\alpha > 0$ small enough. Taking the limit~$\alpha \to 0^+$, since~$\t$ is an appropriate Lebesgue point, we obtain that~$ \langle \nabla_u H (x_u(\t),u(\t),p(\t),\t) , \omega-u(\t) \rangle_{\R^m}  \leq 0 $. Since~$\t$ and~$\omega$ have been chosen arbitrarily, the Hamiltonian gradient condition~\eqref{eqHG} is satisfied, which gives~\ref{lemcharactweakUextremal_i}.
\end{proof}

Assume that~$\U$ is convex and let us prove Proposition~\ref{prop_charactUsing}. Let~$u \in \UU \cap \L^\infty([0,T],\U)$. Firstly, assume that~$u$ is~weakly $\U$-singular, i.e., $ \D \E(u)(\TT_{\L^\infty_\U}[u])$ is a proper subcone of~$\R^n $. Hence~$0_{\R^n}$ belongs to its boundary and, since~$ \D \E(u) (\TT_{\L^\infty_\U}[u])$ is also convex, by a standard separation argument, there exists~$\psi \in \R^n \backslash \{ 0_{\R^n} \}$ such that~$\langle \psi , \D \E(u) \cdot v \rangle_{\R^n}  \leq 0$ for all~$ v \in \TT_{\L^\infty_\U}[u]$. Considering~$p \in \AC([0,T],\R^n)$ the unique solution to~\eqref{eqAE} ending at~$p(T) = \psi$ (in particular~$p$ is not trivial), we obtain that~$\langle p(T) , \D \E(u) \cdot v \rangle_{\R^n}  \leq 0$ for all~$ v \in \TT_{\L^\infty_\U}[u]$. By Lemma~\ref{lemcharactweakUextremal}, $(x_u,u,p)$ is a nontrivial weak~$\U$-extremal lift of~$(x_u,u)$. Conversely, assume that~$u$ is strongly~$\U$-regular, i.e., $\D \E(u)(\TT_{\L^\infty_\U}[u]) = \R^n$. By contradiction let us assume that~$(x_u,u)$ admits a nontrivial weak~$\U$-extremal lift~$(x_u,u,p)$. There exists~$v \in \TT_{\L^\infty_\U}[u]$ such that~$ \D\E(u) \cdot v = p(T)$. By Lemma~\ref{lemcharactweakUextremal} we get that~$ \Vert p(T) \Vert_{\R^n}^2 \leq 0$ and thus~$p(T) = 0_{\R^n}$. Since the adjoint equation~\eqref{eqAE} is linear, it follows that~$p$ is trivial, which raises a contradiction.

\subsection{Proof of Proposition~\ref{prop_wUreginterior}}\label{secproof_wUreginterior}
Given $u \in \L^\infty([0,T],\R^m)$ and $1 \leq s < +\infty$, we define 
$$ \NNN_{\L^s}(u,\rho,M) = \BB_{\L^s} ( u , \rho ) \cap \BB_{\L^\infty} ( 0_{\L^\infty} , M ) $$
for every~$M \geq \Vert u \Vert_{\L^\infty}$ and every~$\rho > 0$, which corresponds to a usual~$\L^s$-neighborhood of~$u$, truncated with a uniform~$\L^\infty$-bound. 
The following lemmas follow from standard techniques in ordinary differential equations theory. 

\begin{lemma}\label{lemopenL1}
Let~$1 \leq s < +\infty$ and~$u \in \UU$. For any~$M \geq \Vert u \Vert_{\L^\infty}$, there exists~$\rho^M > 0$ such that 
$ \NNN_{\L^s}(u,\rho^M,M)\subset \UU$ and $\Vert x_v - x_u \Vert_\C \leq 1$ for all~$v \in \NNN_{\L^s}(u,\rho^M,M)$ . Moreover the restriction of~$\E $ to~$ \NNN_{\L^s}(u,\rho^M,M)$ is Lipschitz continuous when endowing~$ \NNN_{\L^s}(u,\rho^M,M)$ with the~$\L^s$-metric.
\end{lemma}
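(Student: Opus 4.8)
The plan is to prove Lemma~\ref{lemopenL1} by a standard a priori estimate on trajectories combined with a continuation argument. The one conceptual point is that intersecting the $\L^s$-ball with $\BB_{\L^\infty}(0_{\L^\infty},M)$ confines all the relevant state--control--time triples to a fixed compact set on which $f$ is bounded and Lipschitz, and that $\L^s$-closeness of controls is used only through the $\L^1$-closeness it implies via H\"older's inequality (so that, in fact, the truncation by the uniform $\L^\infty$-bound $M$ is essential, $f$ being only locally Lipschitz).

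First I would fix $u \in \UU$ and $M \geq \Vert u \Vert_{\L^\infty}$ and introduce the compact ``tube''
$$ K = \big\{ (x,w,t) \in \R^n \times \R^m \times [0,T] \ \big|\ \Vert x - x_u(t) \Vert_{\R^n} \leq 1, \ \Vert w \Vert_{\R^m} \leq M \big\} , $$
which is closed and bounded. Since $f$ is continuous it is bounded on $K$ by some $C>0$, and since $f$ is of class $\C^1$ with respect to its first two variables, $\nabla_x f$ and $\nabla_u f$, being continuous, are bounded on $K$; hence, as each time-slice of $K$ is convex and one only ever compares values of $f$ at triples sharing the same time component, $f$ is Lipschitz in $(x,w)$ on $K$, uniformly in $t$, with some constant $L \geq 0$. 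Then, for $v \in \BB_{\L^\infty}(0_{\L^\infty},M)$ and as long as $x_v$ is defined on $[0,t]$ with $\Vert x_v(\sigma)-x_u(\sigma)\Vert_{\R^n}\leq 1$ there, I would write $f(x_v,v,\sigma)-f(x_u,u,\sigma) = \big(f(x_v,v,\sigma)-f(x_u,v,\sigma)\big) + \big(f(x_u,v,\sigma)-f(x_u,u,\sigma)\big)$, bound the two brackets by $L\Vert x_v(\sigma)-x_u(\sigma)\Vert_{\R^n}$ and $L\Vert v(\sigma)-u(\sigma)\Vert_{\R^m}$ respectively (all arguments lying in $K$), and deduce from the integral Gronwall lemma and H\"older's inequality that
$$ \Vert x_v(t)-x_u(t)\Vert_{\R^n} \ \leq\ L\,e^{LT}\,\Vert v-u\Vert_{\L^1} \ \leq\ L\,T^{1-1/s}\,e^{LT}\,\Vert v-u\Vert_{\L^s} . $$

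It then suffices to choose $\rho^M > 0$ small enough that $L\,T^{1-1/s}\,e^{LT}\,\rho^M \leq \tfrac{1}{2}$ (any $\rho^M>0$ works if $L=0$). A continuation argument upgrades the displayed bound from ``while $x_v$ stays in the tube'' to ``on all of $[0,T]$'' for every $v \in \NNN_{\L^s}(u,\rho^M,M)$: letting $t^\ast$ be the supremum of the times up to which $x_v$ exists and satisfies $\Vert x_v-x_u\Vert_{\R^n}\leq 1$, the bound gives $\Vert x_v-x_u\Vert_{\R^n}\leq\tfrac{1}{2}$ on $[0,t^\ast)$ while $\dot x_v = f(x_v,v,\cdot)$ is bounded by $C$, so $x_v$ extends continuously to $t^\ast$ with $\Vert x_v(t^\ast)-x_u(t^\ast)\Vert_{\R^n}\leq\tfrac{1}{2}<1$, and Carath\'eodory local existence and uniqueness extend $x_v$ past $t^\ast$ while keeping it within distance $1$, which forces $t^\ast=T$. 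Hence $v \in \UU$ and $\Vert x_v-x_u\Vert_\C\leq\tfrac{1}{2}\leq 1$, i.e.\ $\NNN_{\L^s}(u,\rho^M,M)\subset\UU$.

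Lipschitz continuity of $\E$ restricted to $\NNN_{\L^s}(u,\rho^M,M)$ with the $\L^s$-metric then follows from the identical estimate applied to two controls $v_1,v_2\in\NNN_{\L^s}(u,\rho^M,M)$, whose trajectories both stay within distance $1$ of $x_u$ so that all triples $(x_{v_i}(\sigma),v_j(\sigma),\sigma)$ lie in $K$: Gronwall gives $\Vert x_{v_1}(t)-x_{v_2}(t)\Vert_{\R^n}\leq L\,T^{1-1/s}\,e^{LT}\,\Vert v_1-v_2\Vert_{\L^s}$ for all $t\in[0,T]$, in particular at $t=T$. I expect the continuation step --- passing from local to global-in-time existence of $x_v$ --- to be the only part requiring some care; everything else is routine ODE theory. (If one prefers not to invoke Lipschitz continuity of $f$ in $u$, in the spirit of Remark~\ref{rem_relaxregularityonf}, the second bracket above can instead be handled via uniform continuity of $f$ on a compact set together with a Chebyshev-type bound $\vert\{\sigma:\Vert v(\sigma)-u(\sigma)\Vert_{\R^m}>\delta\}\vert\leq\Vert v-u\Vert_{\L^s}^s/\delta^s$, still yielding $\int_0^T \Vert f(x_u,v,\cdot)-f(x_u,u,\cdot)\Vert_{\R^m}\to 0$ as $\Vert v-u\Vert_{\L^s}\to 0$.)
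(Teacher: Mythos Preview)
Your proof is correct and is precisely the ``standard techniques in ordinary differential equations theory'' that the paper invokes without spelling out: the paper gives no proof of this lemma beyond that one sentence, so there is nothing to compare against except the approach itself, which is the expected compact-tube plus Gronwall plus continuation argument you have written. One very minor remark: your closing parenthetical slightly overstates what Remark~\ref{rem_relaxregularityonf} relaxes --- that remark still assumes $f$ is locally Lipschitz in $(x,u)$, merely dropping $\C^1$ in $u$ --- so the Chebyshev/uniform-continuity variant you sketch would yield only continuity of $\E$, not the Lipschitz continuity the lemma asserts; but this does not affect your main argument.
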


\begin{definition}[Multiple needle-like variation]\label{def_multineedle}
Let $u \in \UU \cap \L^\infty([0,T],\U)$. A \emph{package}~$\chi = (\overline{\t},\overline{\omega}) \in \mathcal{L}(f_u)^Q \times \U^R$, with~$Q$, $R \in \N^*$, $Q \leq R$, consists of:
	\begin{itemize}
		\item a $Q$-tuple 
		$ \overline{\t} = \{ \t_q \}_{q = 1, \ldots, Q} \in \mathcal{L}(f_u)^Q $
		such that~$0 \leq \t_1 < \t_2 < \ldots < \t_Q < T$;
		\item a $R$-tuple 
		$ \overline{\omega} = \{ \omega^r_q \}^{ r=1, \ldots, R_q}_{q=1,\ldots,Q} \in \U^R $
		with $R_q \in \N^*$ for all~$q \in \{ 1,\ldots, Q \}$, and~$R = \sum_{q=1}^{Q} R_q$.
	\end{itemize}
The \emph{multiple needle-like variation}~$u^{\overline{\alpha}}_{\chi} \in \L^\infty([0,T],\U)$ of the control~$u$ is defined by
	$$ u^{\overline{\alpha}}_{\chi}(t) = \left\lbrace
	\begin{array}{ll}
	\omega^r_q & \text{along } [ \t_q + \sum_{\ell=1}^{r-1}\alpha^\ell_q ,   \t_q + \sum_{\ell=1}^{r} \alpha^\ell_q ), \\[5pt]
	&  \quad \quad \; \forall r \in \{ 1 , \ldots ,R_q \} , \; \forall q \in \{ 1,\ldots,Q \} , \\[3pt]
	u(t) &  \text{elsewhere,}
	\end{array}
	\right. $$ 
for a.e.\ $t \in [0,T]$ and for all~$\overline{\alpha} \in \R^R_+ $ sufficiently small so that the intervals do not overlap.
\end{definition}

\begin{remark}\label{rembef}
Let~$1 \leq s < +\infty$ and consider the framework of Definition~\ref{def_multineedle}. The mapping~$\bbar{\alpha} \mapsto u^{\overline{\alpha}}_{\chi}$ is continuous when endowing~$\L^\infty([0,T],\U)$ with the~$\L^s$-metric. Taking~$M = \Vert u \Vert_{\L^\infty} + \Vert \bbar{\omega} \Vert_{(\R^m)^R}$ and considering~$\rho^M > 0$ given in Lemma~\ref{lemopenL1}, there exists~$\beta > 0 $ sufficiently small so that~$ u^{\overline{\alpha}}_{\chi} \in \NNN_{\L^s}(u,\rho^M,M) \subset \UU $ for all~$\overline{\alpha} \in [0,\beta]^R$.
\end{remark}

\begin{lemma}\label{lem_multineedle}
In the frameworks of Definition~\ref{def_multineedle} and of Remark~\ref{rembef}, the mapping~$\Psi : [0,\beta]^R \to \R^n$, defined by~$\Psi(\overline{\alpha}) = \E ( u^{\overline{\alpha}}_{\chi} )$ for all~$\overline{\alpha} \in [0,\beta]^R$, satisfies~$\Psi ( 0_{\R^R} ) = x_u(T)$ and is of class~$\C^1$ with 
$$ 
\dfrac{\partial \Psi}{\partial \alpha^r_q} (0_{\R^R}) = w^u_{  \t_q , \omega^r_q } (T) 
$$	
for every~$r \in \{ 1,\ldots,R_q \}$ and every~$q \in \{ 1,\ldots,Q \}$.
\end{lemma}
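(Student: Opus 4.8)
The plan is to reduce the multiple needle-like variation to a composition of a smooth finite-dimensional map with the end-point mapping, and then to compute the derivative by a standard variational argument, reusing~\eqref{eq2} one needle at a time. First I would observe that~$\Psi(0_{\R^R}) = \E(u^{0}_\chi) = \E(u) = x_u(T)$ since all intervals collapse when~$\bbar\alpha = 0_{\R^R}$. The smoothness of~$\Psi$ on~$[0,\beta]^R$ follows from the regularity of~$\E$ together with the fact that, for fixed package~$\chi$, the map~$\bbar\alpha \mapsto u^{\bbar\alpha}_\chi$ is a continuous (indeed, piecewise-affine-in-the-breakpoints) map into~$\NNN_{\L^s}(u,\rho^M,M) \subset \UU$ as recorded in Remark~\ref{rembef}; one then differentiates the flow of~\eqref{contsys} with respect to the parameters~$\alpha^r_q$, which is legitimate because the dynamics~$f$ is~$\C^1$ in~$(x,u)$ and the controls stay in a fixed~$\L^\infty$-ball.

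For the partial derivatives, I would argue as follows. Fix~$q \in \{1,\ldots,Q\}$ and~$r \in \{1,\ldots,R_q\}$ and differentiate~$\Psi$ at~$0_{\R^R}$ in the direction of~$\alpha^r_q$ alone, i.e.\ set all other components of~$\bbar\alpha$ to zero. Then~$u^{\bbar\alpha}_\chi$ reduces to a \emph{single} needle-like variation of the control~$u$ at the Lebesgue point~$\t_q$ with value~$\omega^r_q$, in the sense of~\eqref{eq_singleneedle}. (This uses that~$\t_q \in \mathcal{L}(f_u)$, which is part of the definition of a package.) The one-sided derivative of~$\E$ along this single needle is exactly~$w^u_{(\t_q,\omega^r_q)}(T)$ by~\eqref{eq2}. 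Since~$\Psi$ is~$\C^1$, its one-sided partial derivative along~$\alpha^r_q$ coincides with the genuine partial derivative, giving~$\frac{\partial \Psi}{\partial \alpha^r_q}(0_{\R^R}) = w^u_{(\t_q,\omega^r_q)}(T)$ as claimed.

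One subtlety to treat with care: when~$r \geq 2$, increasing~$\alpha^r_q$ alone still produces a legitimate single needle with value~$\omega^r_q$ placed on an interval starting at~$\t_q$ (since all~$\alpha^\ell_q$ with~$\ell < r$ vanish at the base point), so the reduction to~\eqref{eq_singleneedle} is exact and no interaction terms appear at first order — this is precisely why the multiple needle disassembles into independent single needles at~$\bbar\alpha = 0_{\R^R}$. The main technical point, rather than an obstacle, is justifying the~$\C^1$ regularity of~$\Psi$: one must check that the composition~$\bbar\alpha \mapsto u^{\bbar\alpha}_\chi \mapsto \E(u^{\bbar\alpha}_\chi)$ is differentiable even though~$\bbar\alpha \mapsto u^{\bbar\alpha}_\chi$ is \emph{not} differentiable as a map into~$\L^\infty$ (only into~$\L^s$, and only continuously so). This is handled by the classical needle-variation machinery: one writes the trajectory~$x_{u^{\bbar\alpha}_\chi}$ via the variation-of-constants formula, differentiates the perturbation contributed by each needle interval using that~$\t_q$ is a Lebesgue point of~$f_u$, and passes the derivative through the linearized flow, which is exactly the content of the Cauchy problem defining~$w^u_{(\t,\omega)}$ stated before~\eqref{eq2}. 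Standard Gr\"onwall estimates, uniform on~$\NNN_{\L^s}(u,\rho^M,M)$, close the argument.
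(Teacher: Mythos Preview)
Your proposal is correct and aligns with the paper's treatment: the paper does not actually prove this lemma, stating only that it (together with Lemma~\ref{lemopenL1}) ``follows from standard techniques in ordinary differential equations theory.'' Your plan---reduce to single needles at $\bbar\alpha=0_{\R^R}$ to read off the partials via~\eqref{eq2}, and justify $\C^1$ regularity by the classical variation-of-constants/Lebesgue-point/Gr\"onwall machinery rather than by composing with the $\L^\infty$-differentiability of~$\E$---is exactly the standard argument the paper is alluding to.
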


\begin{remark}\label{remmultiplesvaleurs}
Let $u \in \UU \cap \L^\infty([0,T],\U)$. Note that, for any Lebesgue point~$\t_q \in \mathcal{L}(f_u)$ considered in a multiple needle-like variation (see Definition~\ref{def_multineedle}), it is possible to consider several values~$\omega^r_q \in \U$ for~$r=1,\ldots,R_q$ with~$R_q \in \N^*$. This additional degree of freedom is essential in order to generate the $\U$-Pontryagin cone of~$u$ with multiple needle-like variations, as developed in the next remark.
\end{remark}

\begin{remark}\label{rempontconegenerated}
The $\U$-Pontryagin cone of a control~$u \in \UU \cap \L^\infty([0,T],\U)$ is generated by multiple needle-like variations as follows. Consider some~$z \in \Pont_\U[u]$. Definition~\ref{def_pontcone} gives
$$ z = \sum_{q=1}^{ \ttilde{Q} } \lambda_q w^u_{(\t_q,\omega_q)}(T) $$
for some~$\ttilde{Q} \in \N^*$, where~$\lambda_q \geq 0$ and~$(\t_q,\omega_q) \in \mathcal{L}(f_u) \times \U$ for all~$q \in \{ 1,\ldots,\ttilde{Q} \}$. By gathering the Lebesgue points~$\t_q$ that are equal (and thus gathering the corresponding values~$\omega_q$, see Remark~\ref{remmultiplesvaleurs}), we construct a package~$\chi = (\overline{\t},\overline{\omega}) \in \mathcal{L}(f_u)^Q \times \U^R$ as in Definition~\ref{def_multineedle} (with~$Q \leq R = \ttilde{Q}$) and 
$$ z = \sum_{q=1}^{Q} \sum_{r=1}^{R_q} \lambda^r_q w^u_{(\t_q,\omega^r_q)}(T). $$
Denoting by
$ \overline{\lambda} = \{ \lambda^r_q \}^{ r=1, \ldots, R_q}_{q=1,\ldots,Q} \in \R^R_+$, 
we introduce the~$\C^1$ mapping~$\Psi' : [0,\beta'] \to \R^n$, defined by~$\Psi'(\alpha) = \Psi(\alpha \overline{\lambda})$ for all~$\alpha \in [0,\beta']$, where~$\Psi$ is the mapping defined in Lemma~\ref{lem_multineedle} and where~$\beta ' > 0$ is sufficiently small to guarantee that~$\alpha \overline{\lambda} \in [0,\beta]^R$ for all~$\alpha \in [0,\beta ']$. We finally get that
$$ \lim\limits_{\alpha \to 0^+} \dfrac{ \E ( u^{ \alpha \overline{\lambda} }_\chi ) - \E ( u ) }{ \alpha } = z  $$
because~$\frac{\partial \Psi'}{\partial \alpha}(0) = \D \Psi (0_{\R^R}) \cdot \overline{\lambda} = z$.
\end{remark}

Now let us prove Proposition~\ref{prop_wUreginterior}. Let~$u \in \UU \cap \L^\infty([0,T],\U)$ be weakly~$\U$-regular. Thus~$\Pont_\U[u] = \R^n$ contains~$e_j$ and~$-e_j$ for all~$j \in \{ 1,\ldots,n \}$, where~$\{ e_j \}_{j=1,\ldots,n}$ is the canonical basis of~$\R^n$. For all~$j \in \{ 1,\ldots,n \}$, Definition~\ref{def_pontcone} gives
$$ e_j = \sum_{q=1}^{\ttilde{Q}^{j+}} \lambda^{j+}_q w^u_{(\t^{j+}_q,\omega^{j+}_q)}(T) $$
for some~$\ttilde{Q}^{j+} \in \N^*$, where~$\lambda^{j+}_q \geq 0$ and~$(\t^{j+}_q,\omega^{j+}_q) \in \mathcal{L}(f_u) \times \U$ for all~$q \in \{ 1,\ldots,\ttilde{Q}^{j+} \}$, and
$$ -e_j = \sum_{q=1}^{\ttilde{Q}^{j-}} \lambda^{j-}_q w^u_{(\t^{j-}_q,\omega^{j-}_q)}(T) $$
for some~$\ttilde{Q}^{j-} \in \N^*$, where~$\lambda^{j-}_q \geq 0$ and~$(\t^{j-}_q,\omega^{j-}_q) \in \mathcal{L}(f_u) \times \U$ for all~$q \in \{ 1,\ldots,\ttilde{Q}^{j-} \}$. By gathering the Lebesgue points~$\t^{j\pm}_q$ which are equal (and thus gathering the corresponding values~$\omega^{j\pm}_q$, see Remark~\ref{remmultiplesvaleurs}), we construct a package~$\chi = (\overline{\t},\overline{\omega}) \in \mathcal{L}(f_u)^Q \times \U^R$ as in Definition~\ref{def_multineedle} (with~$Q \leq R = \sum_{j=1}^n (\ttilde{Q}^{j+} + \ttilde{Q}^{j-})$). Considering the~$\C^1$ mapping~$\Psi$ defined in Lemma~\ref{lem_multineedle}, it is clear, in the same spirit as in Remark~\ref{rempontconegenerated}, that each vector~$e_j$ and~$-e_j$ belong to~$\D \Psi (0_{\R^R}) \cdot \R^R_+ $, and thus~$\D \Psi (0_{\R^R}) \cdot \R^R_+ = \R^n$. Now we define the mapping~$\Phi : \R^n \times [0,\beta]^{R} \longrightarrow \R^n $ by~$
\Phi(z,\bbar{\alpha}) =  \Psi(\bbar{\alpha}) - z $ for all~$(z,\bbar{\alpha}) \in \R^n \times [0,\beta]^{R}$. The mapping~$\Phi$ is of class~$\C^1$ and satisfies~$\Phi(x_u(T),0_{\R^{R}}) = 0_{\R^n}$ and~$\frac{\partial \Phi}{\partial \bbar{\alpha}} (x_u(T),0_{\R^{R}}) \cdot \R^{R}_+  = \D \Psi (0_{\R^R}) \cdot \R^R_+ = \R^n$. From the conic implicit function theorem~\cite[Theorem~1]{antoine1990}, there exists an open neighborhood~$\VV$ of~$x_u(T)$ and a continuous mapping~$\bbar{\alpha} : \VV \to [0,\beta]^{R}$ satisfying~$\bbar{\alpha} ( x_u(T) ) = 0_{\R^{R}}$ and~$\Phi(z , \bbar{\alpha} (z) ) = 0_{\R^n}$ for all~$z \in \VV$. Then it suffices to introduce the mapping~$V : \VV \to \UU \cap \L^\infty([0,T],\U)$ defined by~$V(z) = u^{\bbar{\alpha}(z)}_\chi $ for all~$z \in \VV$. By Remark~\ref{rembef}, the mapping~$V$ is continuous when endowing its codomain with the $\L^1$-metric.

\subsection{Proof of Proposition~\ref{prop_charactsUsing}}\label{secproof_charactsUsing}


\begin{lemma}\label{lemcharactstrongextremal}
Let $u \in \UU \cap \L^\infty([0,T],\U)$ and~$p \in \AC ([0,T],\R^n)$ be a solution to~\eqref{eqAE}. The following statements are equivalent:
\begin{enumerate}[label=\rm{(\roman*)}]
\item\label{lemcharactstrongextremal_i} $(x_u,u,p)$ is a strong $\U$-extremal lift of the pair~$(x_u,u)$;
\item\label{lemcharactstrongextremal_iii} $\langle p(T) ,z \rangle_{\R^n} \leq 0$ for all~$ z \in \Pont_\U[u]$;
\item\label{lemcharactstrongextremal_iv} $\langle p(T) , w^u_{(\t,\omega) }(T) \rangle_{\R^n} \leq 0$ for all~$(\t,\omega) \in \mathcal{L}(f_u) \times \U$.
\end{enumerate}
\end{lemma}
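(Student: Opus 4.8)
The plan is to prove Lemma~\ref{lemcharactstrongextremal} by following the same template used for Lemmas~\ref{lemcharactweakextremal} and~\ref{lemcharactweakUextremal}, exploiting the fact that the adjoint vector~$p$ plays the role of a separating functional. First I would establish the equivalence between~\ref{lemcharactstrongextremal_iii} and~\ref{lemcharactstrongextremal_iv}: this is immediate from Definition~\ref{def_pontcone}, since $\Pont_\U[u]$ is the smallest convex cone containing all strong $\U$-variation vectors $w^u_{(\t,\omega)}(T)$ with $(\t,\omega) \in \mathcal{L}(f_u) \times \U$, and a linear functional is nonpositive on a convex cone if and only if it is nonpositive on a generating set (nonnegative conic combinations of vectors on which $\langle p(T),\cdot\rangle_{\R^n}$ is nonpositive remain nonpositive).

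The substantive part is the equivalence between~\ref{lemcharactstrongextremal_i} and~\ref{lemcharactstrongextremal_iv}. For one direction, assuming~\ref{lemcharactstrongextremal_iv}, I would fix a Lebesgue point~$\t \in \mathcal{L}(f_u)$ (also requiring it to be a Lebesgue point of the relevant time-dependent maps, which still leaves a full-measure set) and an arbitrary~$\omega \in \U$, and unwind $\langle p(T), w^u_{(\t,\omega)}(T)\rangle_{\R^n} \leq 0$ using the adjoint equation~\eqref{eqAE}. Setting $g(t) = \langle p(t), w^u_{(\t,\omega)}(t)\rangle_{\R^n}$ for $t \in [\t,T]$, a direct computation using the equations satisfied by $p$ and by $w^u_{(\t,\omega)}$ shows that $\dot g(t) = 0$ for a.e.\ $t \in [\t,T]$ (the terms involving $\nabla_x f$ cancel, exactly as in the proof of Lemma~\ref{lemcharactweakextremal}), hence $g(T) = g(\t) = \langle p(\t), f(x_u(\t),\omega,\t) - f(x_u(\t),u(\t),\t)\rangle_{\R^n} = H(x_u(\t),\omega,p(\t),\t) - H(x_u(\t),u(\t),p(\t),\t)$. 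Thus~\ref{lemcharactstrongextremal_iv} says precisely that $H(x_u(\t),\omega,p(\t),\t) \leq H(x_u(\t),u(\t),p(\t),\t)$ for every $\omega \in \U$ and a.e.\ $\t$, which is the Hamiltonian maximization condition~\eqref{eqHM}, i.e.,~\ref{lemcharactstrongextremal_i}. The converse direction reverses this chain: if~\eqref{eqHM} holds, then $g(\t) \leq 0$ for a.e.\ $\t$, and $g(T) = g(\t)$ gives $\langle p(T), w^u_{(\t,\omega)}(T)\rangle_{\R^n} \leq 0$.

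The main obstacle, though it is essentially bookkeeping rather than a genuine difficulty, is the careful handling of the base point: the identity $g(T) = g(\t)$ is only available for a.e.\ $\t$, and one must argue that requiring $\t \in \mathcal{L}(f_u)$ and a Lebesgue point of $t \mapsto H(x_u(t),\omega,p(t),t)$ (for a countable dense set of $\omega$, then by continuity in $\omega$ for all $\omega \in \U$) suffices to recover~\eqref{eqHM} a.e.\ on $[0,T]$. One should also note that $w^u_{(\t,\omega)}$ is only defined on $[\t,T]$, so the computation of $\dot g$ takes place on that subinterval, with the initial condition $w^u_{(\t,\omega)}(\t) = f(x_u(\t),\omega,\t) - f(x_u(\t),u(\t),\t)$ supplying the value $g(\t)$. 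Modulo these routine measure-theoretic points, the lemma follows.
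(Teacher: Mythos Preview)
Your proposal is correct and follows essentially the same route as the paper: the equivalence \ref{lemcharactstrongextremal_iii}$\Leftrightarrow$\ref{lemcharactstrongextremal_iv} is immediate from Definition~\ref{def_pontcone}, and for \ref{lemcharactstrongextremal_i}$\Leftrightarrow$\ref{lemcharactstrongextremal_iv} the paper likewise introduces $h_{(\t,\omega)}(t)=\langle p(t),w^u_{(\t,\omega)}(t)\rangle_{\R^n}$, observes it is constant on $[\t,T]$ thanks to~\eqref{eqAE}, and identifies $h_{(\t,\omega)}(\t)\leq 0$ with~\eqref{eqHM}.

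One remark: your final paragraph worries more than necessary. For each fixed $(\t,\omega)\in\mathcal{L}(f_u)\times\U$, the function $g$ is absolutely continuous on $[\t,T]$ and $\dot g=0$ a.e.\ there, so $g(T)=g(\t)$ holds \emph{exactly}, not merely for a.e.\ $\t$. Since $\mathcal{L}(f_u)$ already has full measure, the equivalence with~\eqref{eqHM} a.e.\ is immediate; no countable-dense-set or continuity-in-$\omega$ argument is needed.
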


\begin{proof}
The equivalence between~\ref{lemcharactstrongextremal_iii} and~\ref{lemcharactstrongextremal_iv} follows from Definition~\ref{def_pontcone}. For all~$(\t,\omega) \in \mathcal{L}(f_u) \times \U$, we set~$h_{(\t,\omega)}(t) = \langle p(t) , w^u_{(\t,\omega)}(t) \rangle_{\R^n}$ for all~$t \in [\t,T]$ which is constant thanks to~\eqref{eqAE}. Note that~\ref{lemcharactstrongextremal_iv}, which can be written as~$ h_{(\t,\omega)}(T) \leq 0$ for all~$(\t,\omega) \in \mathcal{L}(f_u) \times \U$, is equivalent to~$h_{(\t,\omega)}(\t) \leq 0$ for all~$(\t,\omega) \in \mathcal{L}(f_u) \times \U$, which exactly corresponds to the Hamiltonian maximization condition~\eqref{eqHM}, giving~\ref{lemcharactstrongextremal_i}.
\end{proof}

Let us prove Proposition~\ref{prop_charactsUsing}. Let~$u \in \UU \cap \L^\infty([0,T],\U)$. First, assume that~$u$ is strongly~$\U$-singular, i.e., $ \Pont_\U[u]$ is a proper subcone of~$\R^n $. Hence~$0_{\R^n}$ belongs to its boundary and, since~$ \Pont_\U[u]$ is also convex, by a standard separation argument, there exists~$\psi \in \R^n \backslash \{ 0_{\R^n} \}$ such that~$\langle \psi , z \rangle_{\R^n}  \leq 0$ for all~$ z \in \Pont_\U[u]$. Considering~$p \in \AC([0,T],\R^n)$ the unique solution to~\eqref{eqAE} ending at~$p(T) = \psi$ (in particular~$p$ is not trivial), we obtain that~$\langle p(T) ,z \rangle_{\R^n}  \leq 0$ for all~$ z \in \Pont_\U[u]$. By Lemma~\ref{lemcharactstrongextremal}, $(x_u,u,p)$ is a nontrivial strong~$\U$-extremal lift of~$(x_u,u)$. Conversely, assume that~$u$ is weakly~$\U$-regular, i.e., $\Pont_\U[u] = \R^n$. By contradiction, let us assume that~$(x_u,u)$ admits a nontrivial strong~$\U$-extremal lift~$(x_u,u,p)$. Since~$p(T) \in \Pont_\U[u] = \R^n$, it follows from Lemma~\ref{lemcharactstrongextremal} that~$ \Vert p(T) \Vert_{\R^n}^2 \leq 0$ and thus~$p(T) = 0_{\R^n}$. Since the adjoint equation~\eqref{eqAE} is linear, it follows that~$p$ is trivial, which raises a contradiction. 

\subsection{Proof of Proposition~\ref{proplinear2a2b} (only the sufficient condition)}\label{secproof_proplinear2}

First step: assume that~$\U$ is convex and that~$x_u(T)$ belongs to the interior of the~$\L^\infty_\U$-accessible set. Let us prove that~$u$ is strongly~$\U$-regular (and thus is weakly~$\U$-regular by Proposition~\ref{proplinear}). By contradiction assume that~$u$ is weakly~$\U$-singular. By Proposition~\ref{prop_charactUsing}, let~$(x_u,u,p)$ be a nontrivial weak~$\U$-extremal lift of the pair~$(x_u,u)$. Since the adjoint equation~\eqref{eqAE} is linear, we know that~$p(T) \neq 0_{\R^n}$. Since~$x_u(T)$ belongs to the interior of~$\E ( \L^\infty([0,T],\U))$, there exist~$\gamma > 0$ sufficiently small and~$v \in \L^\infty([0,T],\U)$ such that $ x_u(T) + \gamma p(T) = \E(v) $. Since the control system~\eqref{contsys} is linear, $\E$ is affine and thus~$
 \gamma p(T) = \E(v) - \E(u) = \D \E (u) \cdot (v - u) $. Then~$ \gamma \Vert p(T) \Vert^2_{\R^n} = \langle p(T) , \D \E (u) \cdot (v - u) \rangle_{\R^n} \leq 0$ by Lemma~\ref{lemcharactweakUextremal}, and thus~$p(T) = 0_{\R^n}$, which raises a contradiction.
 
Second step: in the general control constraints case, assume that~$x_u(T)$ belongs to the interior of the~$\L^\infty_\U$-accessible set. Then~$x_u(T)$ belongs to the interior of the~$\L^\infty_{\mathrm{conv}(\U)}$-accessible set. Since~$u \in \L^\infty([0,T],\U) \subset \L^\infty([0,T],\mathrm{conv}(\U))$, we infer from the first step that~$u$ is strongly~$\mathrm{conv}(\U)$-regular. We deduce that~$u$ is weakly~$\U$-regular from Proposition~\ref{proplinear}.

\section{Proofs of results of Section~\ref{sec_comments}}\label{secproofs2}

\subsection{Proof of Proposition~\ref{propsensitive}}\label{secproof_propsensitive}

\begin{remark}\label{remsensitive}
Given a partition~$\T$ of~$[0,T]$, it is clear that a target point~$x^1 \in \R^n$ is~$\PC^\T_\U$-reachable in time~$T$ from~$x^0$ if and only if~$x^1$ is~$\PC^{\T'}_\U$-reachable in time~$T$ from~$x^0$ for at least one partition~$\T'$ of~$[0,T]$ such that~$\T' \subset \T$, if and only if~$x^1$ is~$\PC^{\T'}_\U$-reachable in time~$T$ from~$x^0$ for all partitions~$\T'$ of~$[0,T]$ such that~$\T \subset \T'$.
\end{remark}

Let~$u \in \UU \cap \L^\infty([0,T],\U)$ and assume that Property~\eqref{eqPu} is not satisfied. Let~$\T=\{ t_i \}_{i=0,\ldots,N}$ be a partition of~$[0,T]$ and~$\eps > 0$. Since Property~\eqref{eqPu} is not satisfied, there exists a partition~$\T ' = \{ t'_i \}_{i=0,\ldots,N'}$ of~$[0,T]$ such that~$\Vert \T' \Vert < 2\eps$ and such that~$x_u(T)$ is not~$\PC^{\T'}_\U$-reachable in time~$T$ from~$x^0$. For any~$i \in \{ 1,\ldots,N-1 \}$, the intersection~$ \T' \cap (t_i - \eps , t_i+\eps)$ is not empty and we select~$t^\eps_i$ one of its elements. For~$i=0$ (resp.\ $i=N$), we choose~$t^\eps_0 = 0$ (resp.\ $t^\eps_N = T$). Consider the partition~$\T^\eps = \{ t^\eps_i \}_{i=0,\ldots,N}$ of~$[0,T]$. Since~$\T^\eps \subset \T'$, we know from Remark~\ref{remsensitive} that~$x_u(T)$ is not~$\PC^{\T^\eps}_\U$-reachable in time~$T$ from~$x^0$. 

\subsection{Proof of Proposition~\ref{propn1}}\label{secproof_propn1}

\begin{lemma}[Approximated reachability]\label{lemproofpropn1}
Given any~$u \in \UU \cap \L^\infty([0,T],\U)$ and any~$\eps > 0$, there exists a threshold~$\delta > 0$ such that, for any partition~$\T$ of~$[0,T]$ satisfying~$\Vert \T \Vert \leq \delta$, there exists~$v \in \UU \cap \PC^\T([0,T],\U)$ such that~$\Vert x_v(T) - x_u(T) \Vert_{\R^n} \leq \eps$.
\end{lemma}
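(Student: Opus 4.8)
The plan is to construct, for every sufficiently fine partition~$\T$, a piecewise constant control~$v \in \PC^\T([0,T],\U)$ that is close to~$u$ in the~$\L^1$-norm and uniformly bounded, and then to conclude by the~$\L^1$-continuous dependence of the end-point mapping recorded in Lemma~\ref{lemopenL1}. Since~$\U$ is not assumed convex in this lemma, the averaging operators of Appendix~\ref{app3} are not directly available, so I would instead follow the Lusin-theorem approach of Appendix~\ref{app2}.

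First I would quantify the accuracy needed at the level of~$\E$. Set~$M = \Vert u \Vert_{\L^\infty} + 1$. By Lemma~\ref{lemopenL1} applied with~$s=1$, there is~$\rho^M > 0$ such that~$\NNN_{\L^1}(u,\rho^M,M) \subset \UU$ and the restriction of~$\E$ to~$\NNN_{\L^1}(u,\rho^M,M)$ is~$L$-Lipschitz for the~$\L^1$-metric, for some~$L \geq 0$. Hence it suffices to produce~$v \in \PC^\T([0,T],\U)$ with~$\Vert v \Vert_{\L^\infty} \leq M$ and~$\Vert v - u \Vert_{\L^1} \leq \rho'$, where~$\rho' = \min(\rho^M,\eps/(L+1))$: then~$v \in \NNN_{\L^1}(u,\rho^M,M) \subset \UU$ and~$\Vert x_v(T) - x_u(T) \Vert_{\R^n} = \Vert \E(v) - \E(u) \Vert_{\R^n} \leq L \rho' \leq \eps$.

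By the Lusin theorem, fixing~$\eta = \rho'/(12M)$, there is a compact set~$K \subset [0,T]$ with~$\vert [0,T] \setminus K \vert < \eta$ such that~$u\vert_K$ is continuous, hence uniformly continuous with some modulus of continuity~$\omega_u$. Now let~$\T = \{ t_i \}_{i=0,\ldots,N}$ be any partition with~$\Vert \T \Vert \leq \delta$, write~$I_i = [t_i,t_{i+1})$, and call~$I_i$ \emph{good} if~$\vert I_i \cap K \vert \geq \vert I_i \vert / 2$ and \emph{bad} otherwise. On each good~$I_i$, pick~$s_i \in I_i \cap K$ outside the relevant null sets so that~$u(s_i) \in \U$ and~$\vert u(s_i) \vert \leq \Vert u \Vert_{\L^\infty}$ (possible since~$\vert I_i \cap K \vert > 0$), and set~$v \equiv u(s_i)$ on~$I_i$; on each bad~$I_i$, set~$v \equiv \omega_0$ for a fixed such value~$\omega_0 = u(s_\ast)$ with~$s_\ast \in K$. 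Then~$v \in \PC^\T([0,T],\U)$ and~$\Vert v \Vert_{\L^\infty} \leq \Vert u \Vert_{\L^\infty} \leq M$. The bad subintervals have total length at most~$2 \vert [0,T] \setminus K \vert < 2\eta$, so~$\sum_{\mathrm{bad}} \int_{I_i} \vert v - u \vert \leq 2M \cdot 2\eta = 4M\eta$. On a good~$I_i$, splitting over~$I_i \cap K$ and~$I_i \setminus K$ and using that~$\vert u(s_i) - u(t) \vert \leq \omega_u(\delta)$ for~$t \in I_i \cap K$, one gets~$\int_{I_i} \vert v - u \vert \leq \vert I_i \vert\, \omega_u(\delta) + 2M \vert I_i \setminus K \vert$, hence~$\sum_{\mathrm{good}} \int_{I_i} \vert v - u \vert \leq T\, \omega_u(\delta) + 2M\eta$. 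Altogether~$\Vert v - u \Vert_{\L^1} \leq T\, \omega_u(\delta) + 6M\eta$. Choosing~$\delta > 0$ small enough that~$T\, \omega_u(\delta) \leq \rho'/2$ (and noting that~$6M\eta \leq \rho'/2$ by the choice of~$\eta$) yields~$\Vert v - u \Vert_{\L^1} \leq \rho'$, which finishes the proof.

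The delicate point is the ordering and independence of the two small parameters: the Lusin exceptional-set bound~$\eta$ must be fixed first, independently of~$\T$, while the mesh threshold~$\delta$ is chosen afterwards from the modulus of continuity of~$u\vert_K$; the good/bad dichotomy is set up with a positive density threshold precisely so that every good interval admits an admissible sample point and so that the total measure of the bad intervals is controlled by~$\vert [0,T] \setminus K \vert$ uniformly in~$\T$. The only other thing to watch is that~$v$ remain uniformly bounded (by~$\Vert u \Vert_{\L^\infty}$, hence by~$M$) so that it lies in the neighborhood furnished by Lemma~\ref{lemopenL1}; this is why the sampled values are taken away from the relevant null sets.
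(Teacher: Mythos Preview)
Your proof is correct and follows essentially the same strategy as the paper's: invoke Lemma~\ref{lemopenL1} with $s=1$ to reduce the problem to an $\L^1$-approximation of $u$ by a uniformly bounded piecewise constant control with values in $\U$, and then produce such an approximation via the Lusin theorem. The paper packages the Lusin step as Proposition~\ref{proplusin} (with a simpler positive-measure criterion in place of your good/bad dichotomy) and simply cites it; you reprove it inline with slightly different bookkeeping, but the substance is the same.
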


\begin{proof}
Let~$u \in \UU \cap \L^\infty([0,T],\U)$ and~$\eps > 0$. Take~$s=1$ and~$M = \Vert u \Vert_{\L^\infty}$ in Lemma~\ref{lemopenL1} and let~$L^M > 0$ being a positive Lipschitz constant of~$\E$ restricted to~$ \NNN_{\L^1}(u,\rho^M,M)$ endowed with the~$\L^1$-metric. By Proposition~\ref{proplusin}, there exists~$\delta > 0$ such that, for any partition~$\T$ of~$[0,T]$ satisfying~$\Vert \T \Vert \leq \delta$, there exists~$v \in \PC^\T([0,T],\U)$ such that~$\Vert v-u \Vert_{\L^1} \leq \min ( \rho^M , \frac{\eps}{L^M} )$ and~$\Vert v \Vert_{\L^\infty} \leq \Vert u \Vert_{\L^\infty} = M$. Since~$v \in \NNN_{\L^1}(u,\rho^M,M) \subset \UU$, from Lemma~\ref{lemopenL1}, we have~$\Vert x_v(T) - x_u(T) \Vert_{\R^n} = \Vert \E(v) - \E(u) \Vert_{\R^n} \leq L^M \Vert v - u \Vert_{\L^1} \leq \eps$. 
\end{proof}

Let us prove Proposition~\ref{propn1}. Let~$u \in \L^\infty([0,T],\U)$ be such that~$x_u(T)$ belongs to the interior of the~$\L^\infty_\U$-accessible set. There exist~$u'$, $u'' \in \L^\infty([0,T],\U)$ such that~$x_{u'}(T) < x_u(T) < x_{u''} (T)$. We infer from Lemma~\ref{lemproofpropn1} that there exists~$\delta  > 0$ such that, for any partition~$\T$ of~$[0,T]$ satisfying~$\Vert \T \Vert \leq \delta $, there exist~$v'$, $v'' \in \PC^\T([0,T],\U)$ such that~$x_{v'}(T) \leq x_u(T) \leq x_{v''}(T)$. Now let us fix such a partition~$\T$ of~$[0,T]$ which satisfies~$\Vert \T \Vert \leq \delta$. In view of the above, we know that~$x_u(T)$ belongs to the convex hull of~$\E ( \PC^\T([0,T],\U))$. On the other hand, since~$\U$ is convex, $\PC^\T([0,T],\U)$ is convex and thus is a connected set. Since~$\E$ is continuous on~$\UU = \L^\infty([0,T],\R^m)$, we deduce that~$\E ( \PC^\T([0,T],\U))$ is a connected set of~$\R$, and thus is convex. We have proved that~$x_u(T) \in \E ( \PC^\T([0,T],\U))$.

\subsection{Proof of Theorem~\ref{thmmain} under strong~$\U$-regularity}\label{sec_firstproof}

Let $u \in \mathcal{U} \cap \L^\infty([0,T],\U)$ be a control such that~$x^1 = x_u(T)= \E (u)$. Let~$M = \Vert x_u \Vert_{\C} +\Vert u \Vert_{\L^\infty} + 1$ and let us fix some~$1 < s < +\infty$. Using the truncated dynamics~$f^M$ introduced in Appendix~\ref{apptruncated}, we have~$x^M_u = x_u $ and~$ \D\E^M (u) = \D\E(u) $ (see Remark~\ref{remtruncated}). Assume that~$u$ is strongly $\U$-regular. By Definition~\ref{def_Ureg}, there exists a~$2n$-tuple~$\bbar{v} = \{ v_j  \}_{j=1,\ldots,2n}$ of elements of~$\TT_{\L^\infty_\U}[u]$ such that 
\begin{equation}\label{condtfic}
\begin{split}
& \D\E^M(u) \cdot v_j = \D\E(u)  \cdot v_j = e_j  \\
& \D\E^M(u)  \cdot v_{n+j} = \D\E(u)  \cdot v_{n+j} = -e_j 
\end{split}
\end{equation}
for every~$j \in \{ 1,\ldots,n \}$, where~$\{ e_j \}_{j=1,\ldots,n}$ is the canonical basis of~$\R^n$. We define the mapping~$\Psi: \L^s([0,T],\R^m) \times \L^s([0,T],\R^m)^{2n} \times \R^{2n}_+\longrightarrow \R^n $ by
$$
\Psi(y,\bbar{z},\bbar{\alpha}) =  \E^M \bigg( y + \di \sum_{j=1}^{2n} \alpha_j z_j \bigg)  
$$
for all~$(y,\bbar{z},\bbar{\alpha}) \in \L^s([0,T],\R^m) \times \L^s([0,T],\R^m)^{2n} \times \R^{2n}_+$. This mapping satisfies~$\Psi(u,\bbar{v},0_{\R^{2n}}) = \E^M (u) = x^M_u(T) = x_u(T) = x^1$. Furthermore, since~$\E^M : \L^s([0,T],\R^m) \to \R^n$ is of class~$\C^1$ (see Proposition~\ref{propinputoutputR}), the mapping~$\Psi$ is also of class~$\C^1$ and we infer from~\eqref{condtfic} that~$
\frac{\partial\Psi}{\partial\bbar{\alpha}}(u,\bbar{v},0_{\R^{2n}}) \cdot \R^{2n}_+ = \R^n$. By the conic implicit function theorem~\cite[Theorem~1]{antoine1990}, there exists a continuous mapping~$\bbar{\alpha} : \BB_{\L^s} (u,\eta) \times \BB_{(\L^s)^{2n}} (\bbar{v},\eta) \to \R^{2n}_+$, with~$\eta > 0$, satisfying~$\bbar{\alpha} (u,\bbar{v}) = 0_{\R^{2n}}$ and~$\Psi(y,\bbar{z},\bbar{\alpha}(y,\bbar{z})) = x^1$ for all~$(y,\bbar{z}) \in \BB_{\L^s} (u,\eta) \times \BB_{(\L^s)^{2n}} (\bbar{v},\eta)$. 

By Lemma~\ref{lemaverage2}, there exists a threshold~$\delta > 0$ such that~$\II^\T(u) \in \BB_{\L^s} (u,\eta)$ and~$\II^\T(\bbar{v}) \in \BB_{(\L^s)^{2n}} (\bbar{v},\eta)$, and thus
$$ \Psi \Big( \II^\T(u),\II^\T(\bbar{v}),\bbar{\alpha} \big( \II^\T(u),\II^\T(\bbar{v}) \big) \Big) = x^1 $$
for any partition~$\T$ of $[0,T]$ satisfying~$\Vert \T \Vert \leq \delta$, where~$\II^\T$ is the averaging operator introduced in Appendix~\ref{app3}. For any partition~$\T $ of~$[0,T]$ satisfying~$\Vert \T \Vert \leq \delta$, we define the control
$$ V^\T = u + \di \sum_{j=1}^{2n} \alpha_j (\II^\T(u),\II^\T(\bbar{v})) v_j \in \L^{\infty}([0,T],\R^m). $$
Using the linearity of the averaging operators, we obtain the piecewise constant control
\begin{multline*}
\II^\T(V^\T) = \II^\T(u) \\
+ \di \sum_{j=1}^{2n} \alpha_j (\II^\T(u),\II^\T(\bbar{v})) \II^\T (v_j) \in \PC^\T([0,T],\R^m)
\end{multline*}
which satisfies
$$ \E^M ( \II^\T(V^\T) ) = \Psi \Big( \II^\T(u),\II^\T(\bbar{v}),\bbar{\alpha} \big( \II^\T(u),\II^\T(\bbar{v}) \big) \Big) = x^1  $$
for all partitions~$\T $ of~$[0,T]$ satisfying~$\Vert \T \Vert \leq \delta$. If necessary we take a smaller value of~$\delta > 0$ to have~$\Vert \II^\T(u) - u \Vert_{\L^s}$ and~$\Vert \II^\T(\bbar{v})) - \bbar{v} \Vert_{(\L^s)^{2n}}$ small enough (by Lemma~\ref{lemaverage2}), and thus~$\Vert \bbar{\alpha} (\II^\T(u),\II^\T(\bbar{v})) \Vert_{\R^{2n}}$ small enough as well, to get that:
\begin{enumerate}[label=\rm{(\roman*)}]
\item $\Vert \II^\T(V^\T) \Vert_{\L^\infty} \leq \Vert V^\T \Vert_{\L^\infty} \leq \Vert u \Vert_{\L^\infty} + 1 \leq M$ (here we used in particular Lemma~\ref{lemaverage1}); 
\item $\Vert \II^\T(V^\T) - u \Vert_{\L^s} \leq \Vert \II^\T(V^\T) - \II^\T(u) \Vert_{\L^s} + \Vert \II^\T(u) - u \Vert_{\L^s} \leq \Vert V^\T - u \Vert_{\L^s} + \Vert \II^\T(u) - u \Vert_{\L^s} \leq \rho^M$ where~$\rho^M > 0$ is given in Lemma~\ref{lemopenL1} (here also we used Lemma~\ref{lemaverage1});
\item $V^\T$ is with values in~$\U$ (which is possible by Lemma~\ref{lemconetangent} with~$J = 2n$ and using that~$v_j \in \TT_{\L^\infty_\U}[u]$ for all~$j \in \{ 1,\ldots,2n \}$), and thus so is $\II^\T (V^\T)$ by Proposition~\ref{propaverage3};
\end{enumerate}
for all partitions~$\T$ of~$[0,T]$ satisfying $\Vert \T \Vert \leq \delta$. 

We are now in a position to conclude the proof. Let us fix a partition~$\T$ of~$[0,T]$ satisfying~$\Vert \T \Vert \leq \delta$ and, for the ease of notations, let us denote simply by~$V = \II^\T (V^\T) \in \PC^\T([0,T],\R^m)$ and recall that~$\E^M(V)=x^1$. Since~$ V$ is with values in~$\U$ from the above item~(iii), we have~$V  \in \PC^\T([0,T],\U)$. By the above items~(i) and~(ii) and by Lemma~\ref{lemopenL1}, we have~$V \in \NNN_{\L^s} ( u , \rho^M , M ) \subset \mathcal{U}$ and~$\Vert x_{V} - x_u \Vert_{\C} \leq 1$. We infer that~$\Vert x_{V} \Vert_{\C} \leq \Vert x_u \Vert_{\C} + 1 \leq M$ and, since~$\Vert V \Vert_{\L^\infty} \leq M$ from the above item~(i), we obtain from Remark~\ref{remtruncated} that~$x^M_{V} = x_{V}$ and thus~$ \E (V) = x_{V}(T) = x^M_{V}(T) = \E^M (V) = x^1$. The proof is complete.

\subsection{Proof of Theorem~\ref{thmmain} under weak~$\U$-regularity}\label{sec_secondproof}

Let $u \in \mathcal{U} \cap \L^\infty([0,T],\U)$ be a control such that~$x^1 = x_u(T)= \E (u)$. Assume that~$u$ is weakly~$\U$-regular and, by contradiction, that Property~\eqref{eqPu} is not satisfied. Then there exists a sequence~$(\T_k)_{k \in \N}$ of partitions of~$[0,T]$ such that~$\Vert \T_k \Vert \to 0$ as~$k \to +\infty$ and such that~$x^1$ is not~$\PC^{\T_k}_\U$-reachable in time~$T$ from~$x^0$ for all~$k \in \N$.

We first introduce several notations. Since~$u$ is weakly~$\U$-regular, considering~$\{ e_j \}_{j=1,\ldots,n}$ the canonical basis of~$\R^n$, we construct a package~$\chi = (\overline{\t},\overline{\omega}) \in \mathcal{L}(f_u)^Q \times \U^R$ as in the proof of Proposition~\ref{prop_wUreginterior}. Now take~$s=1$ and~$M =  \Vert u \Vert_{\L^\infty} + \Vert \bbar{\omega} \Vert_{(\R^m)^R}$ and consider~$\rho^M > 0$ given in Lemma~\ref{lemopenL1}. As in Remark~\ref{rembef}, there exists~$\beta > 0$ sufficiently small so that~$u^{\bbar{\alpha}}_\chi \in \NNN_{\L^1}(u,\frac{\rho^M}{2},M) $ for all~$\bbar{\alpha} \in [0,\beta]^R$. In particular we have~$u^{\bbar{\alpha}}_\chi \in \UU \cap \L^\infty([0,T],\U) $ for all~$\bbar{\alpha} \in [0,\beta]^R$. Consider the~$\C^1$ mapping~$\Psi : [0,\beta]^R \to \R^n$, defined by~$\Psi(\bbar{\alpha}) = \E ( u^{\bbar{\alpha}}_\chi )$ for all~$\bbar{\alpha} \in [0,\beta]^R$, which satisfies~$\Psi(0_{\R^R}) = x_u(T)$ and~$\D \Psi (0_{\R^R})\cdot \R^R_+ = \R^n$ as in the proof of Proposition~\ref{prop_wUreginterior}.

We define the~$\C^1$ mapping~$\Phi : \R^n \times [0,\beta]^R \to \R^n$ by~$\Phi(z,\bbar{\alpha}) = \Psi(\bbar{\alpha}) - z$ for all~$(z,\bbar{\alpha}) \in \R^n \times [0,\beta]^R$. It follows from the above arguments that~$\frac{\partial \Phi}{\partial \bbar{\alpha}} (x_u(T),0_{\R^R}) \cdot \R^R_+ = \R^n$ and, since~$\Phi( x_u(T) , 0_{\R^R}) = 0_{\R^n}$, the conic implicit function theorem~\cite[Theorem~1]{antoine1990} provides the existence of a continuous mapping~$\bbar{\alpha} : \BB_{\R^n}(x_u(T),\eta) \to [0,\beta]^R$, with~$\eta > 0$, such that~$\bbar{\alpha}(x_u(T)) = 0_{\R^R}$ and~$\Phi ( z , \bbar{\alpha}(z) ) = 0_{\R^n}$ for all~$z \in \BB_{\R^n}(x_u(T),\eta)$. 

The mapping~$V : \BB_{\R^n} (x_u(T),\eta) \to \NNN_{\L^1}(u,\frac{\rho^M}{2},M)$, defined by~$V(z) = u^{\bbar{\alpha}(z)}_\chi $ for all~$z \in \BB_{\R^n} (x_u(T),\eta)$, is such that~$V(z) \in \UU \cap \L^\infty([0,T],\U) $ for all~$z \in \BB_{\R^n} (x_u(T),\eta)$. When endowing the codomain with the~$\L^1$-metric, the continuity of~$V$ follows from the continuity of~$\bbar{\alpha}$ and from Remark~\ref{rembef}. Finally note that~$x_{V(z)}(T) = \E ( V(z) ) = \E ( u^{\bbar{\alpha}(z)}_\chi ) = \Psi ( \bbar{\alpha}(z) ) 
= \Phi ( z , \bbar{\alpha}(z) ) + z =  z$
for all~$z \in \BB_{\R^n} (x_u(T),\eta)$.

In what follows we denote by~$L^M > 0$ a positive Lipschitz constant of~$\E$ restricted to~$\NNN_{\L^1}(u,\rho^M,M)$ endowed with the~$\L^1$-metric (see Lemma~\ref{lemopenL1}). By contradiction, assume that, for all~$k \in \N$, there exists some~$z_k \in  \BB_{\R^n} (x_u(T),\eta)$ such that
$$ 
\min \bigg( \frac{\rho^M}{2} , \frac{\eta}{L^M} \bigg) < \Vert V(z_k) - \II^{\T_k} ( V(z_k) )  \Vert_{\L^1}  ,
$$ 
where~$\II^{\T_k}$ is the averaging operator introduced in Appendix~\ref{app3}. By compactness of~$\BB_{\R^n} (x_u(T),\eta)$, up to a subsequence (that we do not relabel), the sequence~$(z_k)_{k \in \N}$ converges to some~$z' \in \BB_{\R^n} (x_u(T),\eta)$. We infer from Lemma~\ref{lemaverage1} that
\begin{multline*}
 \min \bigg( \frac{\rho^M}{2} , \frac{\eta}{L^M} \bigg) < 2 \Vert V(z_k) - V(z') \Vert_{\L^1} \\
 + \Vert V( z' ) - \II^{\T_k} ( V( z' ) )  \Vert_{\L^1} 
\end{multline*}
for every~$k \in \N$, raising a contradiction when~$k \to + \infty$ by continuity of~$V$ and by Lemma~\ref{lemaverage2}. We conclude that there exists $K \in \N$ such that
\begin{equation}\label{ineq765}
\Vert V(z) - \II^{\T_K} ( V(z) )  \Vert_{\L^1} \leq \min \left( \frac{\rho^M}{2} , \frac{\eta}{L^M} \right)
\end{equation}
for every~$z \in \BB_{\R^n} (x_u(T),\eta)$. Since~$V(z) \in \NNN_{\L^1}(u,\frac{\rho^M}{2},M)$, we deduce from~\eqref{ineq765} and from Lemma~\ref{lemaverage1} that~$ \II^{\T_K} ( V(z) )  \in \NNN_{\L^1} ( u,\rho^M,M) $ for all~$z \in \BB_{\R^n} (x_u(T),\eta)$. Since~$V(z) \in \L^\infty([0,T],\U) $, we infer from Proposition~\ref{propaverage3} that~$ \II^{\T_K} ( V(z) )  \in \PC^{\T_K}([0,T],\U) $ for all~$z \in \BB_{\R^n} (x_u(T),\eta)$.

To conclude the proof of Theorem~\ref{thmmain}, we define $\BBB : \BB_{\R^n} (x_u(T),\eta) \to \R^n$ by
\begin{multline*}
\BBB(z) =  x_u(T) + z - x_{ \II^{\T_K} ( V(z) ) } (T)  \\
= \E (u) + \E \Big( V(z) \Big) - \E \Big( \II^{\T_K} ( V(z) ) \Big)
\end{multline*}
for every~$z \in \BB_{\R^n} (x_u(T),\eta)$. By Lemma~\ref{lemaverage1} and thanks to the continuities of the mapping~$V$ and of the restriction of~$\E$ on~$ \NNN_{\L^1} ( u,\rho^M,M)$ endowed with the~$\L^1$-metric, $\BBB$ is continuous. Furthermore, since~$V(z)$ and~$\II^{\T_K} ( V(z) )$ both belong to~$\NNN_{\L^1} ( u,\rho^M,M) $, we have
\begin{multline*}
\Vert \BBB (z) - x_u(T) \Vert_{\R^n} =  \left\Vert \E \Big( V(z) \Big) - \E \Big( \II^{\T_K} ( V(z) ) \Big) \right\Vert_{\R^n}  \\
\leq  L^M \Vert V(z) - \II^{\T_K} ( V(z) ) \Vert_{\L^1}    \leq \eta 
\end{multline*}
for every~$z \in \BB_{\R^n} (x_u(T),\eta)$, where we have used~\eqref{ineq765}. Therefore~$\BBB$ is a continuous mapping from~$\BB_{\R^n} (x_u(T),\eta)$ with values in~$\BB_{\R^n} (x_u(T),\eta)$. By the Brouwer fixed-point theorem,~$\BBB$ has a fixed-point~$z^* \in \BB_{\R^n} (x_u(T),\eta)$, and thus
$$  x_{ \II^{\T_K} ( V(z^*) ) } (T) = x_u(T) = x^1 .$$
Since~$\II^{\T_K} ( V(z^*) ) \in \UU \cap \PC^{\T_K}([0,T],\U)$, $x^1$ is~$\PC^{\T_K}_\U$-reachable in time~$T$ from~$x^0$, raising a contradiction. 

\appendix

\subsection{An example}\label{appexamplegrasse}
We develop here an example inspired from~\cite[Section~II]{grasse1995}, showing that the converse of the geometric Pontryagin maximum principle is not true in general and that, given a control~$u \in \UU \cap \L^\infty([0,T],\U)$, the condition that~$x_u(T)$ belongs to the interior of the~$\L^\infty_\U$-accessible set is not a sufficient condition for Property~\eqref{eqPu}, even if~$\U$ is convex.

Take~$T = n = m = 2$ and~$\U = \R^2$. Take~$g_1 \in \C([0,2],\R)$ be a continuous function that is positive on the interval~$[0,1)$ and vanishing on the interval~$[1,2]$. Take~$g_2 \in \L^\infty([0,2],\R)$ be arbitrarily fixed and~$g_3 \in \AC([0,2],\R)$ be defined by~$g_3(t) = \int_0^t g_1(\xi)g_2(\xi) \, d\xi$ for all~$t \in [0,2]$. Note that~$g_3$ is constant on the interval~$[1,2]$. We denote by~$G$ the corresponding constant values. We set $x^0 = 0_{\R^2}$ and the expression of~$f((x_1,x_2),(u_1,u_2),t)$ by
$$  
\begin{pmatrix}
g_1(t)u_1 + g_1(2-t) \Big( (x_1-G)^2 + x_2^2 \Big) u_1 \\
\Big(x_1 - g_3(t) \Big)^2 + g_1(2-t) \Big( (x_1-G)^2 + x_2^2 \Big) u_2
\end{pmatrix}
$$
for all~$((x_1,x_2),(u_1,u_2),t) \in \R^2 \times \R^2 \times [0,2]$.

\begin{claim}\label{claim1}
The point~$(G,0)$ is an equilibrium of the control system on the interval~$[1,2]$, independently of the control.
\end{claim}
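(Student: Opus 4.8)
The plan is to verify directly that the vector field vanishes at $(G,0)$ for every control value and every time in $[1,2]$, and then to invoke uniqueness of solutions to conclude that a trajectory reaching $(G,0)$ on $[1,2]$ must stay there.

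First I would record the two relevant facts about the auxiliary functions on the interval $[1,2]$: by construction $g_1$ vanishes on $[1,2]$, so $g_1(t) = 0$ for all $t \in [1,2]$; and $g_3$ is constant equal to $G$ on $[1,2]$, so $g_3(t) = G$ for all $t \in [1,2]$. Next I would substitute $(x_1,x_2) = (G,0)$ into the expression of~$f$. The factor $(x_1-G)^2 + x_2^2$ then equals $0$, which annihilates the second summand of each of the two components (the summands carrying the factor $g_1(2-t)$), regardless of the values of $u_1,u_2$. The first component thus reduces to $g_1(t)u_1$, which is $0$ since $g_1(t)=0$ on $[1,2]$; the second component reduces to $(x_1 - g_3(t))^2 = (G-G)^2 = 0$. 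Hence $f((G,0),(u_1,u_2),t) = 0_{\R^2}$ for all $(u_1,u_2) \in \R^2$ and all $t \in [1,2]$.

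Finally, since $f$ is of class~$\C^1$ with respect to its first variable, the Cauchy--Lipschitz uniqueness theorem applies: for any control $u$ and any $t_0 \in [1,2]$, the constant function $t \mapsto (G,0)$ is the unique solution on $[t_0,2]$ of $\dot{x}(t) = f(x(t),u(t),t)$ with $x(t_0) = (G,0)$. This is exactly the assertion that $(G,0)$ is an equilibrium of the control system on $[1,2]$, independently of the control. I do not expect any genuine obstacle here: the computation is routine, and the only point worth stating carefully is that ``equilibrium independently of the control'' means precisely that the field vanishes at $(G,0)$ for all admissible control values, which the substitution makes transparent.
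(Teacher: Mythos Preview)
Your proof is correct and follows essentially the same approach as the paper's own argument: both verify directly that $g_1(t)=0$ and $g_3(t)=G$ on $[1,2]$, then substitute $(x_1,x_2)=(G,0)$ to conclude $f((G,0),u,t)=0_{\R^2}$ for all $(u,t)\in\R^2\times[1,2]$. Your additional appeal to Cauchy--Lipschitz uniqueness is a harmless elaboration the paper omits, but it does not change the substance of the argument.
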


\begin{proof}
Since~$g_1(t) = 0$ and~$g_3(t) = G$ for all~$t \in [1,2]$, we have~$f((G,0),u,t) = 0_{\R^2}$ for all~$(u,t) \in \R^2 \times [1,2]$.
\end{proof}

\begin{claim}\label{claim2}
Let~$u \in \L^\infty([0,2],\R^2)$ satisfying~$u_1(t)=g_2(t)$ for a.e.\ $t \in [0,1]$. Then~$u \in \UU$ and~$x_u = (g_3,0)$. In particular~$x_u(2) = (G,0)$.
\end{claim}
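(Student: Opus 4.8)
The plan is to exhibit the trajectory explicitly and then invoke uniqueness of the state associated with an admissible control. Set $x = (g_3, 0) \in \AC([0,2],\R^2)$. Since $g_3(0) = \int_0^0 g_1 g_2 = 0$ we have $x(0) = 0_{\R^2} = x^0$, so it suffices to check that $\dot{x}(t) = f(x(t),u(t),t)$ for a.e.\ $t \in [0,2]$; once this is done, $u$ is admissible and $x_u = x$ by the uniqueness of the trajectory, hence $x_u(2) = (g_3(2),0) = (G,0)$ because $g_3 \equiv G$ on $[1,2]$.

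The key observation making the verification work is that the ``correction term'' $g_1(2-t)\big((x_1(t)-G)^2 + x_2(t)^2\big)$, which appears in both components of $f$, vanishes identically along the curve $x = (g_3,0)$. Indeed, for $t \in [0,1]$ one has $2-t \in [1,2]$, hence $g_1(2-t) = 0$; and for $t \in [1,2]$ one has $g_3(t) = G$ and $x_2(t) = 0$, hence $(x_1(t)-G)^2 + x_2(t)^2 = (g_3(t)-G)^2 = 0$. Thus, along $x$, the system reduces to $\dot{x}_1(t) = g_1(t)u_1(t)$ and $\dot{x}_2(t) = (x_1(t)-g_3(t))^2$.

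It then remains to verify these two reduced identities. For the second component, $x_1(t) - g_3(t) = g_3(t) - g_3(t) = 0$, so $\dot{x}_2(t) = 0$, consistent with $x_2 \equiv 0$. For the first component, $\dot{x}_1(t) = g_3'(t) = g_1(t)g_2(t)$ for a.e.\ $t \in [0,2]$ (absolute continuity of $g_3$), while on $[0,1]$ the hypothesis $u_1 = g_2$ gives $g_1(t)u_1(t) = g_1(t)g_2(t)$, and on $[1,2]$ one has $g_1(t) = 0 = g_3'(t)$; hence $\dot{x}_1 = g_1 u_1$ a.e.\ on $[0,2]$, irrespective of the values of $u_1$ on $[1,2]$. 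This closes the argument.

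There is essentially no obstacle beyond bookkeeping: the claim is a direct substitution, and the only point requiring care is keeping track of where each factor vanishes --- $g_1(2-t) = 0$ on $[0,1]$ versus $(x_1-G)^2 + x_2^2 = 0$ on $[1,2]$, with $[0,1]$ being precisely the interval on which $g_1$ is not identically zero and on which the constraint $u_1 = g_2$ is imposed.
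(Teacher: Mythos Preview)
Your proof is correct and relies on exactly the same observations as the paper's: the correction factor $g_1(2-t)\big((x_1-G)^2+x_2^2\big)$ vanishes on $[0,1]$ because $g_1(2-\cdot)=0$ there, and on $[1,2]$ the state sits at the equilibrium $(G,0)$. The only cosmetic difference is that the paper computes $x_u$ forward in time (integrating on $[0,1]$ and then invoking Claim~\ref{claim1} on $[1,2]$), whereas you propose the candidate $(g_3,0)$ globally and verify the ODE; both routes use uniqueness of trajectories to conclude.
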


\begin{proof}
Since~$g_1(2-\xi) = 0$ for all~$\xi \in [0,1]$, it holds that~$x_{u,1}(t) = \int_0^t g_1(\xi)u_1(\xi)\, d\xi = \int_0^t g_1(\xi)g_2(\xi)\, d\xi = g_3(t)$ for all~$t \in [0,1]$. From the second coordinate, we obtain that~$x_{u,2}(t) = 0$ for all~$t \in [0,1]$. Since~$x_u(1) = (g_3(1),0) = (G,0)$, we get from Claim~\ref{claim1} that~$x_u(t) = (G,0) = (g_3(t),0) $ for all~$t \in [1,2]$.
\end{proof}

\begin{claim}\label{claim3}
Let~$u \in \UU$ such that~$x_u(T') = (G,0)$ for some~$T' \in [1,2]$. Then~$u_1(t) = g_2(t)$ for a.e.\ $t \in [0,1]$.
\end{claim}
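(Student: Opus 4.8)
The plan is to force the trajectory $x_u$ to coincide with $(g_3,0)$ on the whole interval $[0,1]$, after which the conclusion on $u_1$ drops out by differentiating the first coordinate.

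First I would exploit Claim~\ref{claim1} together with the uniqueness of solutions of the Cauchy problem associated with~\eqref{contsys} (legitimate since $f$ is $\C^1$, hence locally Lipschitz, with respect to $x$). Since $x_u(T') = (G,0)$ for some $T' \in [1,2]$, and since $(G,0)$ is an equilibrium of the control system on $[1,2]$ independently of the control, uniqueness run both forward and backward from time $T'$ on $[1,2]$ yields $x_u(t) = (G,0)$ for every $t \in [1,2]$. In particular $x_u(1) = (G,0)$, that is, $x_{u,1}(1) = G = g_3(1)$ and $x_{u,2}(1) = 0$.

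Next I would restrict attention to $[0,1]$. For $t \in [0,1]$ one has $2-t \in [1,2]$, hence $g_1(2-t) = 0$, so along $[0,1]$ the dynamics reduces to $\dot{x}_{u,1}(t) = g_1(t) u_1(t)$ and $\dot{x}_{u,2}(t) = (x_{u,1}(t) - g_3(t))^2$ for a.e.\ $t \in [0,1]$. Since $x_{u,2}(0) = 0$ (the starting point is $x^0 = 0_{\R^2}$) and $x_{u,2}(1) = 0$ (previous step), and since $t \mapsto x_{u,2}(t)$ is nondecreasing on $[0,1]$ (its derivative is a square), we get $x_{u,2} \equiv 0$ on $[0,1]$ and therefore $(x_{u,1}(t) - g_3(t))^2 = 0$ for a.e.\ $t \in [0,1]$. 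By continuity of $x_{u,1}$ and of $g_3$, this yields $x_{u,1}(t) = g_3(t)$ for every $t \in [0,1]$.

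Finally I would differentiate: from $x_{u,1} = g_3$ on $[0,1]$ and the identity $\dot{g}_3(t) = g_1(t) g_2(t)$ for a.e.\ $t \in [0,1]$, we obtain $g_1(t)\big(u_1(t) - g_2(t)\big) = 0$ for a.e.\ $t \in [0,1]$. Since $g_1$ is positive on $[0,1)$, this forces $u_1(t) = g_2(t)$ for a.e.\ $t \in [0,1]$, which is exactly the claim. I do not anticipate any genuine obstacle; the only step requiring a little care is the uniqueness/equilibrium argument of the first paragraph, where one must make sure to invoke local Lipschitzness of $f$ in $x$ in order to propagate uniqueness backward from time $T'$ down to time $1$.
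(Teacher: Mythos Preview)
Your proof is correct and follows essentially the same route as the paper's: use the equilibrium property (Claim~\ref{claim1}) together with uniqueness to get $x_u(1)=(G,0)$, then exploit $g_1(2-\cdot)=0$ on $[0,1]$ so that $x_{u,2}(1)-x_{u,2}(0)=\int_0^1 (x_{u,1}-g_3)^2$ forces $x_{u,1}=g_3$, and finally differentiate and divide by the positive factor $g_1$ on $[0,1)$. The only difference is cosmetic: the paper compresses your first paragraph into the single sentence ``By Claim~\ref{claim1}, $x_u(1)=(G,0)$'', leaving the backward-uniqueness step implicit, whereas you spell it out.
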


\begin{proof}
By Claim~\ref{claim1}, $x_u(1) = (G,0)$. Since~$g_1(2-\xi) = 0$ for all~$\xi \in [0,1]$, we get that~$0 = x_{u,2}(1) = \int_0^1 (x_{u,1}(\xi) - g_3(\xi) )^2 \, d\xi$ and thus~$x_{u,1}(t) = g_3(t)$ for all~$t \in [0,1]$. Derivating this equality leads to~$g_1(t)u_1(t) = g_1(t)g_2(t)$ for a.e.\ $t \in [0,1]$. Since~$g_1$ is positive on the interval~$[0,1)$, we get that~$u_1(t) = g_2(t)$ for a.e.\ $t \in [0,1]$.
\end{proof}

\begin{claim}\label{claim4}
The end-point mapping is surjective.
\end{claim}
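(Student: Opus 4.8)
\textit{Overall strategy.} The plan is to exploit the two-phase structure of the dynamics, prescribing the control separately on $[0,1]$ and on $[1,2]$. Fix a target point $x^1=(x^1_1,x^1_2)\in\R^2$. If $x^1=(G,0)$, then Claim~\ref{claim2} (taking, say, $u=(g_2,0)$ on $[0,1]$ and $u=0_{\R^2}$ on $[1,2]$) already provides a control $u\in\UU$ with $x_u(2)=(G,0)=x^1$, so from now on I assume $x^1\neq(G,0)$. The idea is then: first steer, on $[0,1]$, from $x^0=0_{\R^2}$ to an auxiliary point $(G,\beta)$ with $\beta>0$; then steer, on $[1,2]$, from $(G,\beta)$ to $x^1$, using that on $[1,2]$, away from the singular point $(G,0)$, the control enters the dynamics through a bijective affine map onto $\R^2$.

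\textit{First phase.} On $[0,1]$ one has $g_1(2-t)=0$, so $\dot x_1=g_1(t)u_1$ and $\dot x_2=(x_1-g_3(t))^2$. I would take $u_2=0$ and $u_1=g_2+w$ on $[0,1]$, where $w\in\L^\infty([0,1],\R)$ is supported in $[0,\tfrac12]$ (where $g_1$ is bounded away from $0$, so $w$ can be taken bounded), is not identically zero, and satisfies $\int_0^1 g_1 w=0$; such a $w$ clearly exists. Setting $W(t)=\int_0^t g_1 w$, one gets $x_1=g_3+W$ on $[0,1]$, hence $x_1(1)=g_3(1)=G$ since $W(1)=0$, while $x_2(1)=\int_0^1 W(t)^2\,dt=:\beta>0$ since $W\not\equiv0$. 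The right-hand side being bounded along this bounded trajectory, the solution is defined on $[0,1]$ and the state reaches $(G,\beta)$ with $\beta>0$.

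\textit{Second phase.} Fix $\delta\in(0,1)$, write $\rho(x)=(x_1-G)^2+x_2^2$, and note that on $[1,2]$ one has $g_1(t)=0$ and $g_3(t)=G$, so $\dot x_1=g_1(2-t)\rho(x)u_1$ and $\dot x_2=(x_1-G)^2+g_1(2-t)\rho(x)u_2$. For $t\in(1,2]$ and $x\neq(G,0)$ the scalar $g_1(2-t)\rho(x)$ is positive, so $u\mapsto f(x,u,t)$ is an affine bijection of $\R^2$; hence any Lipschitz curve $\gamma:[1+\delta,2]\to\R^2\setminus\{(G,0)\}$ can be tracked exactly by the control $u(t)=\frac{1}{g_1(2-t)\rho(\gamma(t))}\big(\dot\gamma_1(t),\,\dot\gamma_2(t)-(\gamma_1(t)-G)^2\big)$, which lies in $\L^\infty([1+\delta,2],\R^2)$ (the denominator is bounded away from $0$ there) and makes $\gamma$ solve the control system, so that, by uniqueness of the Cauchy problem, the trajectory issued from $\gamma(1+\delta)$ under this control equals $\gamma$. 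To cross the instant $t=1$, where the control has no effect, I would set $u=0_{\R^2}$ on $[1,1+\delta]$: there $\dot x=(0,(x_1-G)^2)$, so the trajectory issued from $(G,\beta)$ rests at $(G,\beta)\neq(G,0)$. I would then pick a polygonal, hence Lipschitz, curve $\gamma$ running from $(G,\beta)$ at $t=1+\delta$ to $x^1$ at $t=2$ while staying in $\R^2\setminus\{(G,0)\}$ — possible since $\R^2\setminus\{(G,0)\}$ is path-connected and $x^1\neq(G,0)$ — and use the tracking control above on $[1+\delta,2]$. Concatenating the two phases gives $u\in\L^\infty([0,2],\R^2)$; the resulting trajectory is bounded on $[0,2]$ (it coincides with $\gamma$ on $[1,2]$), hence globally defined, so $u\in\UU$ and $\E(u)=x_u(2)=x^1$. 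As $x^1$ was arbitrary, $\E$ is surjective.

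\textit{Main obstacle.} The main difficulty is the simultaneous vanishing of $g_1(t)$ and $g_1(2-t)$ at $t=1$: at that instant the control cannot influence $\dot x$, so no prescribed velocity is realizable and naive tracking controls would blow up as $t\to1$. I would handle this by relying on the nondegenerate structure on $[0,1)$ for the first phase, by keeping the control null on the short interval $[1,1+\delta]$ (during which the trajectory conveniently rests at $(G,\beta)$), and by only then following a prescribed path kept away from the singular point $(G,0)$. A secondary point to verify — needed so that $u\in\UU$ and not merely $u\in\L^\infty([0,2],\R^2)$ — is that the constructed controls yield non-blowing-up solutions; this is automatic here, since on $[0,1]$ the state stays bounded by construction and on $[1,2]$ the trajectory is, by design, a fixed bounded Lipschitz curve.
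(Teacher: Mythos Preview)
Your proof is correct and shares the paper's two-phase strategy: dispose of $x^1=(G,0)$ via Claim~\ref{claim2}, and otherwise first steer to a state distinct from $(G,0)$, then track a prescribed curve avoiding $(G,0)$ by inverting the dynamics on a subinterval of $(1,2]$ where $g_1(2-\cdot)$ is bounded away from zero. The one substantive difference lies in the first phase. The paper takes $u_1=g_4$, a small $\L^\infty$-perturbation of $g_2$ on $[0,1]$, appeals to openness of $\UU$ for admissibility and to Claim~\ref{claim3} to conclude $x_u(\tfrac32)\neq(G,0)$, then tracks a $\C^1$ curve on $[\tfrac32,2]$. You instead engineer the perturbation explicitly, imposing $\int_0^1 g_1 w=0$ so that $x(1)=(G,\beta)$ with $\beta=\int_0^1 W^2>0$ computed directly, then let the trajectory rest on $[1,1+\delta]$ before tracking a Lipschitz curve. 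Your route is more self-contained---no appeal to Claim~\ref{claim3} or to openness of $\UU$, and admissibility is verified by exhibiting the bounded trajectory---while the paper's route is terser but leans on those prior facts.
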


\begin{proof}
Let~$x^1 \in \R^2$. Let us prove that there exists~$u \in \UU$ such that~$\E(u) = x_u(2) = x^1$. If~$x^1 = (G,0)$, from Claim~\ref{claim2}, it is sufficient to take any control~$u \in \L^\infty([0,2],\R^2)$ which satisfies~$u_1(t) = g_2(t)$ for a.e.\ $t \in [0,1]$. In the rest of this proof, we focus on the case~$x^1 \neq (G,0)$. 

Consider a function~$g_4 \in \L^\infty([0,\frac{3}{2}],\R)$ such that the measure of~$\{ t \in [0,1] \mid g_4(t) \neq g_2 (t) \}$ is positive and such that the~$\L^\infty$-norm of $g_4 - g_2$ on~$[0,1]$ is small enough to guarantee that any control~$u \in \L^\infty([0,2],\R^2)$ which satisfies~$u_1(t) = g_4(t)$ for a.e.\ $t \in [0,\frac{3}{2}]$ is admissible, i.e., $u \in \UU$. This is possible by Claim~\ref{claim2}, since~$\UU$ is an open subset of~$\L^\infty([0,2],\R^2)$. Take such a control~$u$ (which is only determined on the interval~$[0,\frac{3}{2}]$ at this step). By Claim~\ref{claim3}, $x_u( \frac{3}{2} ) \neq (G,0)$. Consider now a~$\C^1$ function~$\varrho : [\frac{3}{2},2] \to \R^2$ which satisfies~$\varrho(\frac{3}{2}) = x_u( \frac{3}{2} )$, $\varrho(2) = x^1$ and~$\varrho(t) \neq (G,0)$ for all~$t \in [\frac{3}{2},2]$. We determine the control~$u$ on~$[0,\frac{3}{2}]$ as
$$ 
u_1 (t) = \dfrac{\dot{\varrho_1}(t)}{\bbar{\varrho}(t)}, \quad u_2 (t) = \dfrac{\dot{\varrho_2}(t) - ( \varrho_1(t) - g_3(t) )^2 }{\bbar{\varrho}(t)},
$$
where~$\bbar{\varrho}(t)=g_1(2-t) ( (\varrho_1(t) - G)^2 + \varrho_2(t)^2 )$ for a.e.\ $t \in [\frac{3}{2},2]$. The control~$u$ belongs to~$\L^\infty([0,2],\R^2)$ and~$x_u  = \varrho$ along~$[\frac{3}{2},2]$. Thus~$\E (u) = x_u(2) = \varrho (2) = x^1$. 
\end{proof}

Let us prove that the converse of the geometric Pontryagin maximum principle is not true in general. Take a control~$u \in \L^\infty([0,T],\R^2)$ which satisfies~$u_1(t)=g_2(t)$ for a.e.\ $t \in [0,1]$. By Claims~\ref{claim2} and~\ref{claim4}, we have~$u \in \UU$ and~$x_u(2)$ belongs to the interior of the~$\L^\infty_{\R^2}$-accessible set. Consider the constant function~$p : [0,2] \to \R^2$ defined by~$p(t)=(0,1) \neq 0_{\R^2}$ for all~$t \in [0,2]$. One can easily check that~$(x_u,u,p)$ is a nontrivial strong $\R^2$-extremal lift of~$(x_u,u)$ and thus~$u$ is strongly~$\R^2$-singular by Proposition~\ref{prop_charactsUsing}.

We now prove that, given a control~$u \in \UU \cap \L^\infty([0,T],\U)$, the condition that~$x_u(T)$ belongs to the interior of the~$\L^\infty_\U$-accessible set is not a sufficient condition for Property~\eqref{eqPu}, even if~$\U$ is convex. Take~$g_2(t)=t$ for a.e.\ $t \in [0,1]$ (which is not piecewise constant). Even if~$(G,0)$ belongs to the interior of the~$\L^\infty_{\U}$-accessible set (Claim~\ref{claim4}), we easily infer from Claim~\ref{claim3} that~$(G,0)$ is not~$\PC^\T_{\U}$-reachable in time~$T$ from~$x^0$ for any partition~$\T$ of~$[0,T]$. Hence Property~\eqref{eqQu} is not satisfied, and neither is the stronger Property~\eqref{eqPu}.

\subsection{A general result on $\L^s$-approximation by piecewise constant functions}\label{app2}


\begin{proposition}\label{proplusin}
Let $1 \leq s < +\infty$. Given any~$u \in \L^\infty([0,T],\U)$ and any~$\eps > 0$, there exists a threshold~$\delta > 0$ such that, for any partition~$\T$ of~$[0,T]$ satisfying $\Vert \T \Vert \leq \delta$, there exists~$v \in \PC^\T([0,T],\U)$ such that~$\Vert v - u \Vert_{\L^s} \leq \eps$ and~$\Vert v \Vert_{\L^\infty} \leq \Vert u \Vert_{\L^\infty}$.
\end{proposition}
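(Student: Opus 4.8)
The plan is to combine Lusin's theorem \cite{lusin1912} with the uniform continuity of $u$ on a large compact set. First I would fix a convenient pointwise representative of $u$. Put $M = \Vert u \Vert_{\L^\infty}$. Since the set $\{ t \in [0,T] : u(t) \in \U \text{ and } \Vert u(t) \Vert_{\R^m} \leq M \}$ has full measure it is nonempty; fix one of its points $t_0$ and set $u_0 = u(t_0) \in \U$, so $\Vert u_0 \Vert_{\R^m} \leq M$. Replacing $u$ by $u_0$ on the null set where $u(t) \notin \U$ or $\Vert u(t) \Vert_{\R^m} > M$, I obtain an everywhere-defined function, still denoted $u$, satisfying $u(t) \in \U$ and $\Vert u(t) \Vert_{\R^m} \leq M$ for every $t \in [0,T]$, with the same $\L^s$- and $\L^\infty$-norms as the original one.

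Fix $\eps > 0$. First choose $\eta > 0$ with $(2M)^s \eta \leq \eps^s / 2$. By Lusin's theorem there is a compact set $K \subset [0,T]$ with $\vert [0,T] \setminus K \vert < \eta$ such that $u\vert_K$ is continuous, hence uniformly continuous: there is a nondecreasing function $\rho : [0,+\infty) \to [0,+\infty)$ with $\rho(r) \to 0$ as $r \to 0^+$ such that $\Vert u(t) - u(t') \Vert_{\R^m} \leq \rho(\vert t - t' \vert)$ for all $t$, $t' \in K$. Then choose $\delta > 0$ small enough that $T \, \rho(\delta)^s \leq \eps^s / 2$; note that $\eta$, $K$, $\rho$, $u_0$ depend only on $u$ and $\eps$, and so does $\delta$.

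Now let $\T = \{ t_i \}_{i=0,\ldots,N}$ be any partition of $[0,T]$ with $\Vert \T \Vert \leq \delta$, and define $v \in \PC^\T([0,T],\R^m)$ as follows: on each sampling interval $[t_i,t_{i+1})$, if $[t_i,t_{i+1}) \cap K \neq \emptyset$ pick $\t_i$ in that intersection and set $v \equiv u(\t_i)$ there, and otherwise set $v \equiv u_0$ there. Then $v$ takes its values in $\U$ and $\Vert v \Vert_{\L^\infty} \leq M = \Vert u \Vert_{\L^\infty}$, so $v \in \PC^\T([0,T],\U)$. For the estimate I split $\Vert v - u \Vert_{\L^s}^s = \sum_{i=0}^{N-1} \int_{t_i}^{t_{i+1}} \Vert v(t) - u(t) \Vert_{\R^m}^s \, \d t$ and, inside each term, split the interval into $[t_i,t_{i+1}) \cap K$ and $[t_i,t_{i+1}) \setminus K$. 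On $[t_i,t_{i+1}) \cap K$ (when nonempty), both $t$ and $\t_i$ lie in $K$ and $\vert t - \t_i \vert \leq \Vert \T \Vert \leq \delta$, so $\Vert v(t) - u(t) \Vert_{\R^m} \leq \rho(\delta)$; summing over $i$ these contribute at most $\rho(\delta)^s \, \vert K \vert \leq T \, \rho(\delta)^s$. On the parts inside $[0,T] \setminus K$, whose total length is $< \eta$, I use the crude bound $\Vert v(t) - u(t) \Vert_{\R^m} \leq 2M$, contributing at most $(2M)^s \eta$. Hence $\Vert v - u \Vert_{\L^s}^s \leq T \, \rho(\delta)^s + (2M)^s \eta \leq \eps^s$, as wanted.

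There is no serious obstacle here; the only point requiring a little care is the very first step, namely securing the \emph{sharp} bound $\Vert v \Vert_{\L^\infty} \leq \Vert u \Vert_{\L^\infty}$ rather than merely $\Vert v \Vert_{\L^\infty} \leq \Vert u \Vert_{\L^\infty} + O(\rho(\delta))$: this forces one to work with a representative of $u$ that is genuinely bounded by $M$ everywhere and to sample the approximant $v$ only from its values. The rest is bookkeeping of the order of the quantifiers --- $\eps$, then $\eta$ (hence $K$, hence $\rho$), then $\delta$ --- which matches the order in the statement.
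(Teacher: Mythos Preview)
Your proof is correct and follows essentially the same route as the paper's: Lusin's theorem yields a compact set~$K$ on which~$u$ is uniformly continuous, and the approximant~$v$ is built by sampling~$u$ at a point of~$K$ on each sampling interval meeting~$K$ and by a fixed value of~$\U$ elsewhere. The only cosmetic difference is that you secure~$v(t)\in\U$ and~$\Vert v(t)\Vert_{\R^m}\leq M$ by fixing a pointwise representative of~$u$ at the outset, whereas the paper achieves the same effect by requiring the sampling interval to meet~$K$ in \emph{positive} measure and then selecting~$\xi_i$ in the full-measure subset where these pointwise bounds hold.
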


\begin{proof}
Let~$u \in \L^\infty([0,T],\U)$ and $\eps > 0$. By the Lusin theorem~\cite{lusin1912}, there exists a compact subset~$\K_\eps \subset [0,T]$ such that~$ ( 2 \Vert u \Vert_{\L^\infty} )^s \mu ( [0,T] \bs \K_\eps ) \leq \eps^s/2$, where~$\mu$ is the Lebesgue measure, and such that~$u$ is continuous on~$\K_\eps$. By uniform continuity of~$u$ on~$\K_\eps$, there exists~$\delta > 0$ such that~$ \Vert u(\xi_2) - u(\xi_1) \Vert_{\R^m} \leq \frac{\eps}{(2T)^{1/s}} $ for all~$\xi_1$, $\xi_2 \in \K_\eps$ satisfying~$\vert \xi_2 - \xi_1 \vert \leq \delta$. Now, let~$\T = \{ t_i \}_{i=0,\ldots,N}$ be a partition of~$[0,T]$ such that~$\Vert \T \Vert \leq \delta$. We set
$$ I = \{ i \in \{ 0,\ldots,N-1 \} \mid \mu (\K_\eps \cap [t_i,t_{i+1})  > 0 \}. $$
For every~$i \in I$, we consider some~$\xi_i \in \K_\eps \cap [t_i,t_{i+1})$ such that~$u(\xi_i) \in \U$ and~$\Vert u(\xi_i) \Vert_{\R^m} \leq \Vert u \Vert_{\L^\infty}$. We also consider some~$\omega \in \U$ such that~$\Vert \omega \Vert_{\R^m} \leq \Vert u \Vert_{\L^\infty}$. We now define
$$  v(t) = \left\lbrace \begin{array}{lcl}
u(\xi_i) & \text{if} & t \in [t_i,t_{i+1}) \text{ with } i \in I, \\
\omega & \text{if} & t \in [t_i,t_{i+1}) \text{ with } i \notin I,
\end{array}
\right. $$
for every~$t \in [0,T]$. In particular we have~$v \in \PC^\T([0,T],\U)$ and~$\Vert v \Vert_{\L^\infty} \leq \Vert u \Vert_{\L^\infty}$. Finally we get that
\begin{eqnarray*}
\Vert v - u \Vert^s_{\L^s} & = & \int_{[0,T] \bs \K_\eps} \Vert v(t) - u(t) \Vert_{\R^m}^s \; dt \\[3pt]
& & + \displaystyle \sum_{i=0}^{N-1} \int_{\K_\eps \cap [t_i,t_{i+1})} \Vert v(t) - u(t) \Vert_{\R^m}^s \; dt \\[3pt]
& \leq & (2 \Vert u \Vert_{\L^\infty})^s \mu ( [0,T] \bs \K_\eps ) \\[3pt]
& & + \displaystyle \sum_{i \in I} \int_{\K_\eps \cap [t_i,t_{i+1})} \Vert u(\xi_i) - u(t) \Vert_{\R^m}^s \; dt \\[3pt]
& \leq & \dfrac{\eps^s}{2} + \dfrac{\eps^s}{2T} \displaystyle \sum_{i \in I} \mu ( \K_\eps \cap [t_i,t_{i+1}) )  \leq \eps^s ,
\end{eqnarray*}
which concludes the proof.
\end{proof}

Note that Proposition~\ref{proplusin} is not true with~$s=+\infty$, as shown in the following Fuller-type example~\cite{Fuller1960}.

\begin{example}\label{exfuller}
Take~$T = 1$, $m=1$ and~$\U = \R$. Consider the oscillating function~$u \in \L^\infty([0,T],\U)$ defined by~$u(t)=1$ for a.e.\ $t \in (\frac{1}{k+1},\frac{1}{k}]$ for all even~$k \in \N^*$ and~$u(t)=0$ for a.e.\ $t \in (\frac{1}{k+1},\frac{1}{k}]$ for all odd~$k \in \N^*$. We have~$\Vert v - u \Vert_{\L^\infty} \geq \frac{1}{2}$ for all~$v \in \PC^\T([0,T],\U)$ and all partitions~$\T$ of~$[0,T]$. 
\end{example}

\begin{corollary}
Let~$1 \leq s < +\infty$. Given any~$u \in \L^s([0,T],\U)$ and any~$\eps > 0$, there exists a threshold~$\delta > 0$ such that, for any partition~$\T$ of~$[0,T]$ satisfying~$\Vert \T \Vert \leq \delta$, there exists~$v \in \PC^\T([0,T],\U)$ such that~$\Vert v - u \Vert_{\L^s} \leq \eps$.
\end{corollary}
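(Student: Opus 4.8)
The plan is to reduce to Proposition~\ref{proplusin} by first truncating $u$ into an essentially bounded function that still takes its values in~$\U$. Since~$\U$ is nonempty, I would fix once and for all some~$\omega_0 \in \U$, and for~$R > 0$ define~$u_R \in \L^\infty([0,T],\U)$ by~$u_R(t) = u(t)$ whenever~$\Vert u(t) \Vert_{\R^m} \leq R$ and~$u_R(t) = \omega_0$ otherwise. Then~$u_R$ takes its values in~$\U$ and~$\Vert u_R \Vert_{\L^\infty} \leq \max ( R , \Vert \omega_0 \Vert_{\R^m} )$, so~$u_R$ falls within the scope of Proposition~\ref{proplusin}.

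The next step is to check that~$\Vert u_R - u \Vert_{\L^s} \to 0$ as~$R \to +\infty$. Denoting~$E_R = \{ t \in [0,T] \mid \Vert u(t) \Vert_{\R^m} > R \}$, one has~$\Vert u_R - u \Vert^s_{\L^s} = \int_{E_R} \Vert \omega_0 - u(t) \Vert^s_{\R^m} \, dt \leq 2^{s-1} \big( \Vert \omega_0 \Vert^s_{\R^m} \, \mu(E_R) + \int_{E_R} \Vert u(t) \Vert^s_{\R^m} \, dt \big)$, where~$\mu$ is the Lebesgue measure. The first term tends to~$0$ since~$\mu(E_R) \leq R^{-s} \Vert u \Vert^s_{\L^s}$, and the second term tends to~$0$ by absolute continuity of the integral (equivalently, by dominated convergence, since~$\mu(E_R) \to 0$ while~$\Vert u(\cdot) \Vert^s_{\R^m}$ is integrable). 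Hence there exists~$R > 0$ such that~$\Vert u_R - u \Vert_{\L^s} \leq \eps/2$.

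Finally I would apply Proposition~\ref{proplusin} to~$u_R \in \L^\infty([0,T],\U)$ with tolerance~$\eps/2$: this yields a threshold~$\delta > 0$ such that, for any partition~$\T$ of~$[0,T]$ satisfying~$\Vert \T \Vert \leq \delta$, there exists~$v \in \PC^\T([0,T],\U)$ with~$\Vert v - u_R \Vert_{\L^s} \leq \eps/2$. The triangle inequality~$\Vert v - u \Vert_{\L^s} \leq \Vert v - u_R \Vert_{\L^s} + \Vert u_R - u \Vert_{\L^s} \leq \eps$ then concludes.

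I do not expect a genuine obstacle here: the argument is a routine truncation followed by an invocation of Proposition~\ref{proplusin}. The only point deserving care is that the truncation must \emph{replace} out-of-range values by a fixed element of~$\U$ rather than, say, clip the norm, so that the approximant~$u_R$ still respects the constraint set~$\U$, which is not assumed to be star-shaped (nor convex); with this precaution the convergence~$\Vert u_R - u \Vert_{\L^s} \to 0$ is the standard $\L^s$-truncation estimate recalled above.
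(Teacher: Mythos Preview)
Your argument is correct and is essentially identical to the paper's own proof: both truncate $u$ by replacing values of large norm with a fixed element of~$\U$, show the truncation converges to~$u$ in~$\L^s$ (the paper invokes dominated convergence where you give a slightly more explicit estimate), and then apply Proposition~\ref{proplusin} to the truncated function with tolerance~$\eps/2$.
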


\begin{proof}
Let~$u \in \L^s([0,T],\U)$ and~$\eps > 0$. We fix some~$\omega \in \U$ and we define~$ C_k = \{ t \in [0,T] \mid \Vert u(t) \Vert_{\R^m} \geq k \} $ and
$$ u_k (t) = \left\lbrace 
\begin{array}{lcl}
u(t) & \text{if} & t \notin C_k, \\
\omega  & \text{if} & t \in C_k,
\end{array}
\right. $$
for a.e.\ $t \in [0,T]$ and for every~$k \in \N$. In particular~$u_k \in \L^\infty([0,T],\U)$ for every $k \in \N$. It is clear that~$( u_k(t) - u(t)  )_{k \in \N}$ converges to~$0_{\R^m}$ as~$k \to +\infty$ and that~$\Vert u_k(t) - u(t) \Vert_{\R^m} \leq \Vert \omega \Vert_{\R^m} + \Vert u(t) \Vert_{\R^m}$ for a.e.\ $t \in [0,T]$. By the Lebesgue dominated convergence theorem, we get that~$\Vert u_k - u \Vert_{\L^s} \rightarrow 0$ as~$k \to +\infty$. Hence, there exists~$k \in \N$ such that~$\Vert u_k - u \Vert_{\L^s} \leq \frac{\eps}{2}$. By Proposition~\ref{proplusin}, there exists~$\delta > 0$ such that, for any partition~$\T$ of $[0,T]$ satisfying~$\Vert \T \Vert \leq \delta$, there exists~$v \in \PC^\T([0,T],\U)$ such that~$\Vert v - u_k \Vert_{\L^s} \leq \frac{\eps}{2}$ and thus~$ \Vert v - u \Vert_{\L^s} \leq \Vert v - u_k \Vert_{\L^s} + \Vert u_k - u \Vert_{\L^s} \leq \eps $.
\end{proof}

\subsection{Averaging operators}\label{app3}
\noindent For any partition~$\T = \{ t_i \}_{i=0,\ldots,N}$ of~$[0,T]$, we define the \textit{averaging operator}~$\II^\T : \L^1([0,T],\R^m) \to \PC^\T([0,T],\R^m)$ by
\begin{equation}\label{defaveraging}
\II^\T(u)(t) = \dfrac{1}{t_{i+1} -t_i} \int_{t_i}^{t_{i+1}} u(\xi) \, d\xi 
\end{equation}
for every~$t \in [t_i,t_{i+1})$, every~$i \in \{ 0,\ldots,N-1 \}$ and every~$u \in \L^1([0,T],\R^m) $. The aim of this section is to establish several useful properties of the averaging operators. 

Let~$\T= \{ t_i \}_{i=0,\ldots,N}$ be a partition of~$[0,T]$. The averaging operator~$\II^\T$ is linear and projects any integrable function onto a piecewise constant function respecting the partition~$\T$ (by averaging its value on each sampling interval~$[t_i,t_{i+1})$). Furthermore we have
\begin{equation}\label{eqdom}
\Vert \II^\T(u)(t) \Vert_{\R^m} \leq \Vert u(t) \Vert_{\R^m}
\end{equation}
for a.e.\ $t \in [0,T]$ and all~$u \in \L^1([0,T],\R^m) $. 

\begin{lemma}\label{lemaverage1}
Let~$1 \leq s \leq +\infty$. For any partition~$\T$ of~$[0,T]$, we have~$\Vert \II^\T(u) \Vert_{\L^s} \leq \Vert u \Vert_{\L^s}$ for all~$u \in \L^s([0,T],\R^m)$.
\end{lemma}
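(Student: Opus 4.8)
The plan is to reduce everything to Jensen's inequality applied separately on each sampling interval, using that $\II^\T(u)$ is, on $[t_i,t_{i+1})$, the constant equal to the Lebesgue average of $u$ over that interval, i.e. the barycentre of $u$ against the \emph{probability} measure $\frac{1}{t_{i+1}-t_i}\,dt$ on $[t_i,t_{i+1}]$ (note that $\L^s([0,T],\R^m)\subset\L^1([0,T],\R^m)$ for $s\ge 1$, so these averages make sense and $\II^\T$ is indeed defined on $\L^s$).

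First I would dispose of the case $s=+\infty$. Fix a partition $\T=\{t_i\}_{i=0,\ldots,N}$ and $u\in\L^\infty([0,T],\R^m)$. For a.e.\ $t\in[t_i,t_{i+1})$ one has
$\|\II^\T(u)(t)\|_{\R^m}\le\frac{1}{t_{i+1}-t_i}\int_{t_i}^{t_{i+1}}\|u(\xi)\|_{\R^m}\,d\xi\le\|u\|_{\L^\infty}$,
and taking the essential supremum over $t\in[0,T]$ yields $\|\II^\T(u)\|_{\L^\infty}\le\|u\|_{\L^\infty}$ (this also follows at once from~\eqref{eqdom}).

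For $1\le s<+\infty$ I would argue as follows. The map $\R^m\ni x\mapsto\|x\|_{\R^m}^s$ is convex, being the composition of the Euclidean norm (convex) with the map $r\mapsto r^s$ on $[0,+\infty)$ (convex and nondecreasing for $s\ge1$). Hence Jensen's inequality with respect to the probability measure $\frac{1}{t_{i+1}-t_i}\,dt$ on $[t_i,t_{i+1}]$ gives, for every $t\in[t_i,t_{i+1})$,
$$\|\II^\T(u)(t)\|_{\R^m}^s=\left\|\frac{1}{t_{i+1}-t_i}\int_{t_i}^{t_{i+1}}u(\xi)\,d\xi\right\|_{\R^m}^s\le\frac{1}{t_{i+1}-t_i}\int_{t_i}^{t_{i+1}}\|u(\xi)\|_{\R^m}^s\,d\xi.$$
The right-hand side is independent of $t$, so integrating over $t\in[t_i,t_{i+1})$ yields $\int_{t_i}^{t_{i+1}}\|\II^\T(u)(t)\|_{\R^m}^s\,dt\le\int_{t_i}^{t_{i+1}}\|u(\xi)\|_{\R^m}^s\,d\xi$, and summing over $i=0,\ldots,N-1$ gives $\|\II^\T(u)\|_{\L^s}^s\le\|u\|_{\L^s}^s$, hence the claim.

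There is no real obstacle here: the only points deserving a line of care are that the averaging weights $\frac{1}{t_{i+1}-t_i}\,dt$ form a probability measure (so Jensen produces no spurious constant) and that $\|\cdot\|_{\R^m}^s$ is genuinely convex for all $s\ge1$; both are routine. In particular the argument shows that $\II^\T$ is a contraction on $\L^s([0,T],\R^m)$ for every $1\le s\le+\infty$, which is the form in which the estimate is used later.
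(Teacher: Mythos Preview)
Your proof is correct and follows essentially the same route as the paper: handle $s=+\infty$ by the trivial bound on each average, and for $1\le s<+\infty$ argue interval-by-interval, then sum. The only cosmetic difference is that you invoke Jensen's inequality for the convex map $x\mapsto\|x\|_{\R^m}^s$, whereas the paper phrases the same step as an application of the H\"older inequality; the resulting estimate on each sampling interval is identical.
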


\begin{proof}
Let~$\T = \{ t_i \}_{i=0,\ldots,N} $ be a partition of~$[0,T]$ and let~$u \in \L^s([0,T],\R^m)$. When~$s=+\infty$, the inequality~$\Vert \II^\T(u) \Vert_{\L^\infty} \leq \Vert u \Vert_{\L^\infty}$ follows from~\eqref{eqdom}. When~$1 \leq s < +\infty$, we get from the H\"older inequality that
\begin{equation*}
 \left\Vert \dfrac{1}{t_{i+1} -t_i} \int_{t_i}^{t_{i+1}} u(\xi) d\xi \right\Vert_{\R^m}^s \!\!\!
  \leq \dfrac{1}{t_{i+1} -t_i} \int_{t_i}^{t_{i+1}} \Vert u(\xi) \Vert^s_{\R^m} d\xi 
\end{equation*}
for all~$i \in \{ 0,\ldots,N-1 \}$, and thus~$\Vert \II^\T(u) \Vert^s_{\L^s} \leq \Vert u \Vert^s_{\L^s}$. 
\end{proof}

The next lemma is instrumental in order to approximate with a~$\L^s$-norm (with any~$1 \leq s < +\infty$) any control~$u \in \L^\infty([0,T],\R^m)$ with piecewise constant controls.

\begin{lemma}\label{lemaverage2}
Let~$1 \leq s < +\infty$. Given any~$u \in \L^s([0,T],\R^m)$, we have~$\Vert \II^\T (u) - u \Vert_{\L^s} \rightarrow 0$ as~$\Vert \T \Vert \to 0$.
\end{lemma}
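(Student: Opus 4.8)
The plan is to reduce to the case of continuous $u$ by a density argument, exploiting that $\II^\T$ is linear and nonexpansive for the $\L^s$-norm by Lemma~\ref{lemaverage1}. Fix $u \in \L^s([0,T],\R^m)$ and $\eps > 0$. Since $1 \leq s < +\infty$, the space $\C([0,T],\R^m)$ is dense in $\L^s([0,T],\R^m)$, so we may choose $g \in \C([0,T],\R^m)$ with $\Vert u - g \Vert_{\L^s} \leq \eps$. Then, for every partition $\T$ of $[0,T]$, using the linearity of $\II^\T$ and Lemma~\ref{lemaverage1},
$$ \Vert \II^\T(u) - u \Vert_{\L^s} \leq \Vert \II^\T(u-g) \Vert_{\L^s} + \Vert \II^\T(g) - g \Vert_{\L^s} + \Vert g - u \Vert_{\L^s} \leq 2\eps + \Vert \II^\T(g) - g \Vert_{\L^s} . $$
It therefore remains to prove that $\Vert \II^\T(g) - g \Vert_{\L^s} \to 0$ as $\Vert \T \Vert \to 0$ when $g$ is continuous.

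For continuous $g$ on the compact interval $[0,T]$, let $\eta : [0,+\infty) \to [0,+\infty)$ be a nondecreasing modulus of continuity of $g$, so that $\Vert g(\xi_2) - g(\xi_1) \Vert_{\R^m} \leq \eta(\vert \xi_2 - \xi_1 \vert)$ for all $\xi_1,\xi_2 \in [0,T]$, with $\eta(\delta) \to 0$ as $\delta \to 0^+$. Writing $\T = \{ t_i \}_{i=0,\ldots,N}$, for $t \in [t_i,t_{i+1})$ we have from~\eqref{defaveraging}
$$ \Vert \II^\T(g)(t) - g(t) \Vert_{\R^m} = \left\Vert \dfrac{1}{t_{i+1}-t_i} \int_{t_i}^{t_{i+1}} \big( g(\xi) - g(t) \big) \, d\xi \right\Vert_{\R^m} \leq \eta(t_{i+1}-t_i) \leq \eta(\Vert \T \Vert) , $$
since $\vert \xi - t \vert \leq t_{i+1} - t_i \leq \Vert \T \Vert$ whenever $\xi, t \in [t_i,t_{i+1})$. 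Integrating, $\Vert \II^\T(g) - g \Vert_{\L^s} \leq T^{1/s} \, \eta(\Vert \T \Vert) \to 0$ as $\Vert \T \Vert \to 0$. Combining with the previous display yields $\limsup_{\Vert \T \Vert \to 0} \Vert \II^\T(u) - u \Vert_{\L^s} \leq 2\eps$, and letting $\eps \to 0$ concludes.

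There is essentially no serious obstacle here: this is the classical argument for convergence of averaging approximations in $\L^s$. The only points deserving attention are that the density of $\C([0,T],\R^m)$ in $\L^s([0,T],\R^m)$ genuinely requires $s < +\infty$ — and indeed the statement fails for $s = +\infty$, as witnessed by Example~\ref{exfuller} — and that it is precisely the nonexpansiveness of $\II^\T$ in $\L^s$-norm (Lemma~\ref{lemaverage1}), which is uniform in $\T$, that makes the reduction to continuous functions legitimate.
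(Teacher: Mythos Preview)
Your proof is correct and takes a genuinely different route from the paper's. The paper argues pointwise: it shows that $\II^\T(u)(\tau) \to u(\tau)$ as $\Vert\T\Vert \to 0$ at every point $\tau$ where the primitive $r(t)=\int_0^t u$ is differentiable with $\dot r(\tau)=u(\tau)$ (hence a.e.), by splitting the average $(r(t_{i+1})-r(t_i))/(t_{i+1}-t_i)$ into two difference quotients anchored at $\tau$, and then upgrades to $\L^s$ convergence via the dominated convergence theorem using the pointwise bound~\eqref{eqdom}. Your argument is the classical ``$3\eps$'' reduction: approximate $u$ in $\L^s$ by a continuous $g$, handle $g$ by uniform continuity, and absorb the error uniformly in $\T$ thanks to the nonexpansiveness of $\II^\T$ from Lemma~\ref{lemaverage1}. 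Your approach is more elementary---it needs neither Lebesgue differentiation nor a pointwise dominating function independent of $\T$---and it makes transparent exactly where $s<+\infty$ enters (density of $\C$ in $\L^s$). The paper's approach, on the other hand, yields the stronger intermediate conclusion of a.e.\ pointwise convergence, though that is not used elsewhere.
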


\begin{proof}
Let~$u \in \L^s([0,T],\U)$. Seeing~\eqref{eqdom} as a domination assumption and thanks to the Lebesgue dominated convergence theorem, we only need to prove that~$\II^\T(u)(\t) \rightarrow u(\t)$ as~$\Vert \T \Vert\rightarrow 0$ for a.e.\ $\t \in [0,T]$. For this purpose we set~$r(t) = \int_0^t u(\xi) \, d\xi$ for every~$t \in [0,T]$ and let~$\t \in [0,T)$ being a Lebesgue point such that~$r$ is derivable at~$\t$ with~$\dot{r}(\t) = u(\t)$. Given any~$\eps > 0$, there exists~$\delta > 0$ such that
$$ 
\left\Vert \dfrac{r(t)-r(\t)}{t - \t} - u(\t) \right\Vert_{\R^m} \leq \dfrac{\eps}{2} 
$$
for every~$t \in [\t-\delta,\t+\delta] \cap [0,T] \bs \{ \t \}$. Take~$\T$ a partition of~$[0,T]$ such that~$\Vert \T \Vert \leq \delta$. There exists~$i \in \{ 0, \ldots , N-1 \}$ such that $\t \in [t_i,t_{i+1})$. Then
\begin{multline*}
\Vert \II^\T(u) (\t) - u(\t) \Vert_{\R^m} = \left\Vert \dfrac{r(t_{i+1})-r(t_i)}{t_{i+1}-t_i} - u(\t) \right\Vert_{\R^m} \\
\leq \left\Vert \dfrac{r(t_{i+1})-r(\t)}{t_{i+1}-\t} - u(\t) \right\Vert_{\R^m} \left\vert \dfrac{t_{i+1}-\t}{t_{i+1}-t_i} \right\vert  \\[3pt]
+  \left\Vert \dfrac{r(\t)-r(t_i)}{\t-t_i} - u(\t) \right\Vert_{\R^m} \left\vert \dfrac{\t-t_i}{t_{i+1}-t_i} \right\vert \leq \eps, 
\end{multline*}
which concludes the proof.
\end{proof}

Our objective now is to prove that, when~$\U$ is convex, the averaging operators project any integrable function with values in~$\U$ onto a piecewise constant function with values in~$\U$. 

\begin{lemma}\label{lemaveraging}
Assume that~$\U$ is convex. If~$u \in \L^1([0,1],\U)$, then~$\int_0^1 u(\xi) d\xi \in \U$.
\end{lemma}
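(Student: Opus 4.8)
The plan is to prove the more general statement: for any nonempty convex set $C \subseteq \R^d$ and any $u \in \L^1([0,1],\R^d)$ with $u(\xi) \in C$ for a.e.\ $\xi \in [0,1]$, the barycenter $b := \int_0^1 u(\xi)\,d\xi$ belongs to $C$. The Lemma is then the instance $C = \U$, $d = m$. I would argue by induction on $d := \dim(\mathrm{aff}(C))$, the dimension of the affine hull of $C$.

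First I would record two preliminary reductions. (a) One always has $b \in \mathrm{aff}(C)$: writing $\mathrm{aff}(C) = \{ x \in \R^d \mid Ax = c \}$ for a suitable matrix $A$ and vector $c$, from $A u(\xi) = c$ a.e.\ and linearity of the integral we get $A b = c$. Hence, replacing the ambient space by $\mathrm{aff}(C)$, I may assume $\mathrm{aff}(C) = \R^d$, so that $\mathrm{int}(C) \neq \emptyset$. (b) One always has $b \in \overline{C}$: represent the closed convex set $\overline{C}$ as the intersection of its supporting closed half-spaces $\{ x \mid \langle a_k, x \rangle_{\R^d} \leq \gamma_k \}$; since $\langle a_k, u(\xi) \rangle_{\R^d} \leq \gamma_k$ a.e., integration over $[0,1]$ yields $\langle a_k, b \rangle_{\R^d} \leq \gamma_k$ for every $k$, so $b \in \overline{C}$.

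Then I would distinguish two cases. If $b \in \mathrm{int}(C)$, we are done since $\mathrm{int}(C) \subseteq C$. Otherwise, using that $\mathrm{int}(\overline{C}) = \mathrm{int}(C)$ for convex sets, the point $b$ lies on the boundary of the closed convex set $\overline{C}$, so there exists a nontrivial supporting hyperplane $H = \{ x \mid \langle a, x \rangle_{\R^d} = \langle a, b \rangle_{\R^d} \}$ with $a \neq 0_{\R^d}$ and $\langle a, x \rangle_{\R^d} \leq \langle a, b \rangle_{\R^d}$ for all $x \in \overline{C}$, in particular for a.e.\ value $u(\xi)$. Integrating the a.e.-nonnegative function $\xi \mapsto \langle a, b \rangle_{\R^d} - \langle a, u(\xi) \rangle_{\R^d}$ over $[0,1]$ gives $0$, hence $\langle a, u(\xi) \rangle_{\R^d} = \langle a, b \rangle_{\R^d}$ a.e., i.e.\ $u(\xi) \in C \cap H$ for a.e.\ $\xi$. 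The set $C \cap H$ is convex and nonempty (it contains $u(\xi)$ on a set of positive measure), and since $\mathrm{aff}(C) = \R^d \not\subseteq H$ its affine hull has dimension $\leq d-1$; moreover $b = \int_0^1 u(\xi)\,d\xi$ with $u$ now $C \cap H$-valued. By the induction hypothesis, $b \in C \cap H \subseteq C$, which closes the induction. The base case $d = 0$ is immediate: $C$ is a single point $\{ p \}$, so $u \equiv p$ and $b = p \in C$.

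I expect the only delicate point to be the handling of closures and interiors of convex sets — precisely, the facts that $\mathrm{int}(\overline{C}) = \mathrm{int}(C)$ and that a boundary point of a full-dimensional closed convex set admits a (nontrivial) supporting hyperplane — together with making the reduction to $\mathrm{aff}(C) = \R^d$ explicit so that $\mathrm{int}(C)$ is genuinely nonempty. Everything else is routine: the only analytic inputs are linearity of the integral and the fact that a nonnegative integrable function on $[0,1]$ with zero integral vanishes almost everywhere.
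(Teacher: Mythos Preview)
Your proof is correct and follows essentially the same approach as the paper: induction on the dimension of the convex set, a separation/supporting-hyperplane argument to trap $u$ in a lower-dimensional convex slice, and the observation that a nonpositive integrand with zero integral vanishes almost everywhere. The paper argues by contradiction (assume $\tilde u\notin\U$, separate, then apply the induction hypothesis to $\U\cap(\tilde u+\psi^\perp)$) and prefaces this with a short projection-based argument for the closed case, whereas you argue directly after first establishing $b\in\overline{C}$ and reducing explicitly to $\mathrm{aff}(C)=\R^d$; these are organizational variations on the same idea, and your explicit reduction to full dimension in fact makes the ``dimension strictly drops'' step slightly cleaner than in the paper.
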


\begin{proof}
Let~$u \in \L^1([0,1],\U)$ and let us prove that~$\ttilde{u} \in \U$ where~$\ttilde{u}$ is defined by~$\ttilde{u} = \int_0^1 u(\xi) d\xi$. We first give a simpler argument when~$\U$ is furthermore assumed to be closed. In that context, by the Hilbert projection theorem, we have 
$$ \langle \ttilde{u} - \proj_\U ( \ttilde{u} ), u(\xi) - \proj_\U (\ttilde{u} ) \rangle_{\R^m} \leq 0   $$ 
for a.e.\ $\xi \in [0,1]$, where~$\proj_\U ( \ttilde{u} ) \in \U$ is the projection of~$\ttilde{u} $ onto~$\U$. Integrating the above inequality over~$[0,1]$ yields~$\Vert \ttilde{u} - \proj_\U ( \ttilde{u} ) \Vert_{\R^m}^2 \leq 0$ and thus~$\ttilde{u} = \proj_\U ( \ttilde{u} ) \in \U$.

Now we remove the closedness assumption made on~$\U$. Let us prove that~$\ttilde{u} \in \U$ by strong induction on the dimension~$d \in \N$ of the nonempty convex set~$\U$. If~$d=0$, the set~$\U$ is reduced to a singleton and the result is trivial. Now consider that~$d \geq 1$ and assume that the result is true at all steps from~$0$ to~$d-1$. By contradiction assume that~$\ttilde{u} \notin \U$. By separation, there exists~$\psi \in \R^m \backslash \{ 0_{\R^m} \}$ such that~$\langle \psi , \omega - \ttilde{u} \rangle_{\R^m} \leq 0 $ for all~$\omega \in \U$. We infer that the null integral~$ \int_0^1 \langle \psi , u(\xi) - \ttilde{u} \rangle_{\R^m} d\xi $ has a nonpositive integrand. Thus this integrand is zero almost everywhere on~$[0,1]$. Therefore~$u$ is with values in the convex set~$\U \cap ( \ttilde{u} + \psi^\bot  )$, where~$\psi^\bot$ stands for the standard hyperplane defined by orthogonality with the nonzero vector~$\psi$. Since~$\U \cap ( \ttilde{u} + \psi^\bot  )$ is a nonempty convex set of dimension strictly inferior than~$d$, thanks to our induction hypothesis we get that~$\ttilde{u} \in \U \cap ( \ttilde{u} + \psi^\bot  )$, which raises a contradiction.
\end{proof}

From Lemma~\ref{lemaveraging} and applying a simple affine change of variable in~\eqref{defaveraging}, we obtain the next proposition. 

\begin{proposition}\label{propaverage3}
Assume that~$\U$ is convex. If $u \in \L^1([0,T],\U)$, then~$\II^\T(u) \in \PC^\T([0,T],\U)$ for any partition~$\T$ of~$[0,T]$.
\end{proposition}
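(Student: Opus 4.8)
The plan is to reduce the statement, one sampling interval at a time, to Lemma~\ref{lemaveraging}. Fix a partition $\T = \{ t_i \}_{i=0,\ldots,N}$ of $[0,T]$ and a function $u \in \L^1([0,T],\U)$. By the very definition~\eqref{defaveraging}, $\II^\T(u)$ is piecewise constant with respect to $\T$, hence $\II^\T(u) \in \PC^\T([0,T],\R^m)$ for free; the only thing left to check is that, for each $i \in \{ 0,\ldots,N-1 \}$, the constant value taken by $\II^\T(u)$ on $[t_i,t_{i+1})$ belongs to $\U$.

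First I would rescale the $i$-th interval onto $[0,1]$ via the affine change of variable $\xi = t_i + (t_{i+1}-t_i)\theta$, which yields
$$ \II^\T(u)(t) = \dfrac{1}{t_{i+1}-t_i} \int_{t_i}^{t_{i+1}} u(\xi) \, d\xi = \int_0^1 u_i(\theta) \, d\theta, \qquad u_i(\theta) := u\big( t_i + (t_{i+1}-t_i)\theta \big), $$
for every $t \in [t_i,t_{i+1})$. Since the change of variable is affine with nonzero slope, $u_i$ is measurable and $\int_0^1 \Vert u_i(\theta) \Vert_{\R^m}\, d\theta = \frac{1}{t_{i+1}-t_i}\int_{t_i}^{t_{i+1}} \Vert u(\xi) \Vert_{\R^m}\, d\xi < +\infty$, so $u_i \in \L^1([0,1],\R^m)$; and clearly $u_i$ takes its values in $\U$, hence $u_i \in \L^1([0,1],\U)$.

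Then I would invoke Lemma~\ref{lemaveraging} for $u_i$: since $\U$ is convex, $\int_0^1 u_i(\theta)\, d\theta \in \U$, i.e.\ $\II^\T(u)(t) \in \U$ for every $t \in [t_i,t_{i+1})$. As $i$ was arbitrary, $\II^\T(u)$ takes all of its values in $\U$, and combined with $\II^\T(u) \in \PC^\T([0,T],\R^m)$ this gives $\II^\T(u) \in \PC^\T([0,T],\U)$, as desired.

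I do not expect any genuine obstacle here: the whole content sits in Lemma~\ref{lemaveraging} (whose proof, by strong induction on the dimension of $\U$ together with a separation argument to dispose of any closedness assumption, is the nontrivial ingredient), while the present proposition is merely its transfer from $[0,1]$ to an arbitrary subinterval by an affine normalization. The only minor point worth stating carefully is that the averaging is insensitive to whether the sampling intervals are taken half-open or closed, since modifying the integrand on a null set leaves the integral unchanged.
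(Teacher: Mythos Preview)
Your proof is correct and follows exactly the route the paper indicates: reduce to Lemma~\ref{lemaveraging} via the affine change of variable $\xi = t_i + (t_{i+1}-t_i)\theta$ on each sampling interval. The paper merely states ``from Lemma~\ref{lemaveraging} and applying a simple affine change of variable in~\eqref{defaveraging}'' without writing out the details you provide.
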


\subsection{Truncated end-point mapping and~$\L^s$-differential}\label{apptruncated}
For every~$M > 0$, we fix a mapping~$\Lambda^M : \R^n \times \R^m \to \R$ of class $\C^1$ satisfying
$$
\Lambda^M(x,u) = \left\lbrace \begin{array}{l}
1 \text{ if } (x,u) \in \BB_{\R^n}(0,2M) \times \BB_{\R^m}(0,2M), \\[3pt]
0 \text{ if } (x,u) \notin \B_{\R^n}(0,3M)  \times \B_{\R^m}(0,3M) .
\end{array} \right.
$$
Let~$M > 0$. When replacing the dynamics~$f$ in the control system~\eqref{contsys} by the \textit{truncated dynamics}~$f^M$, defined by~$ f^M (x,u,t) = \Lambda^M(x,u) f(x,u,t) $ for all~$(x,u,t) \in \R^n \times \R^m \times [0,T]$ we obtain a new control system that we denote by~($\mathrm{CS}^M$). The main difference is that, for any control~$u \in \L^1([0,T],\R^m)$ (even unbounded), there exists a trajectory~$x \in \AC ([0,T], \R^n)$, starting at~$x(0)=x^0$, such that~$\dot{x}(t) = f^M ( x(t),u(t),t )$ for a.e.\ $t \in [0,T]$. In that case the trajectory~$x$ is unique and will be denoted by~$x^M_u$. We now introduce, for any~$1 \leq s \leq +\infty$, the \textit{truncated end-point mapping}~$\E^M : \L^s([0,T],\R^m)\rightarrow\R^n$ defined by~$\E^M(u) = x^M_u(T)$ for all~$u \in \L^s([0,T],\R^m)$. Note that the next proposition, derived from standard techniques in ordinary differential equations theory, is true for any~$1 < s \leq +\infty$. The case~$s=1$ is discussed in Remark~\ref{remnotfrechet}.

\begin{proposition}\label{propinputoutputR}
Let~$1 < s \leq +\infty$ and~$M > 0$. The truncated end-point mapping~$\E^M:\L^s([0,T],\R^m)\rightarrow\R^n$ is of class~$\C^1$ and its Fr\'echet differential is given by
\begin{equation}\label{eqdiffM}
\D \E^M (u) \cdot v = w^{u,M}_v(T) 
\end{equation}
for all $u$, $v \in \L^s([0,T],\R^m)$, where $w^{u,M}_v \in \AC([0,T],\R^n)$ is the unique solution to
\begin{equation*}
\left\lbrace
\begin{array}{l}
\dot{w}(t) =  \nabla_x f^M(x^M_u(t),u(t),t) w(t) \\
\qquad\qquad + \nabla_u f^M(x^M_u(t),u(t),t) v(t), \quad \text{a.e.\ } t \in [0,T] , \\
w(0) =  0_{\R^n} .
\end{array}
\right.
\end{equation*} 
\end{proposition}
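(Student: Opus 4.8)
The plan is to run the classical Carathéodory theory for ordinary differential equations, the essential feature being that the truncated dynamics $f^M = \Lambda^M f$ is globally bounded and has globally bounded, \emph{uniformly continuous} partial derivatives $\nabla_x f^M$ and $\nabla_u f^M$: indeed these are continuous on $\R^n \times \R^m \times [0,T]$ and compactly supported with respect to the first two variables, hence uniformly continuous; I denote by $C>0$ a common bound for $f^M$, $\nabla_x f^M$, $\nabla_u f^M$, and by $\varpi$ a common modulus of uniform continuity of $\nabla_x f^M$, $\nabla_u f^M$ in the variables $(x,u)$ (uniformly in $t$). \emph{Step~1 (well-posedness and continuous dependence).} For $u \in \L^s([0,T],\R^m)$, the map $t \mapsto f^M(x,u(t),t)$ is measurable and bounded by $C$ for each $x$, so Carathéodory's theorem together with Gronwall's lemma (using the uniform Lipschitz constant $C$ in $x$) gives a unique $x^M_u \in \AC([0,T],\R^n)$ solving $\dot x = f^M(x,u,\cdot)$, $x(0)=x^0$, with $\Vert x^M_u \Vert_\C \leq \Vert x^0 \Vert_{\R^n} + CT$; thus $\E^M$ is well defined. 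Gronwall also yields $\Vert x^M_u - x^M_{\tilde u} \Vert_\C \leq C e^{CT} \Vert u - \tilde u \Vert_{\L^1} \leq C e^{CT} T^{1-1/s} \Vert u - \tilde u \Vert_{\L^s}$, so $u \mapsto x^M_u$ (hence $\E^M$) is Lipschitz continuous.

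\emph{Step~2 (the candidate differential).} For fixed $u$, the linear Cauchy problem defining $w^{u,M}_v$ has bounded measurable coefficients, hence a unique solution in $\AC([0,T],\R^n)$; the map $v \mapsto w^{u,M}_v$ is linear and Gronwall gives $\Vert w^{u,M}_v(T) \Vert_{\R^n} \leq C e^{CT} \Vert v \Vert_{\L^1} \leq C e^{CT} T^{1-1/s} \Vert v \Vert_{\L^s}$, so $v \mapsto w^{u,M}_v(T)$ is a bounded linear operator $\L^s([0,T],\R^m) \to \R^n$, our candidate for $\D\E^M(u)$. \emph{Step~3 (Gâteaux, then Fréchet).} Writing $\delta x_v = x^M_{u+v} - x^M_u$ and $z = \delta x_v - w^{u,M}_v$, a first-order Taylor expansion of $f^M$ along segments at $(x^M_u(t),u(t),t)$ gives $\dot z(t) = \nabla_x f^M(x^M_u(t),u(t),t) z(t) + \rho(t)$ with $\Vert \rho(t) \Vert_{\R^n} \leq \varpi\big(\Vert \delta x_v(t)\Vert_{\R^n} + \Vert v(t)\Vert_{\R^m}\big)\big(\Vert \delta x_v(t)\Vert_{\R^n} + \Vert v(t)\Vert_{\R^m}\big)$, so Gronwall yields $\Vert z \Vert_\C \leq e^{CT} \int_0^T \Vert \rho(t) \Vert_{\R^n}\, dt$. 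Splitting the integral: on $\{\Vert v(t)\Vert_{\R^m} \leq \eta\}$ (and using $\Vert \delta x_v \Vert_\C \leq C e^{CT} T^{1-1/s}\Vert v\Vert_{\L^s} \leq \eta$ once $\Vert v\Vert_{\L^s}$ is small) the integrand is $\leq 2\eta\,\varpi(2\eta)$; on the complement, whose measure is $\leq \eta^{-s}\Vert v\Vert_{\L^s}^s$, the crude bound $\Vert\rho(t)\Vert_{\R^n} \leq 2C(\Vert\delta x_v\Vert_\C + \Vert v(t)\Vert_{\R^m})$ integrates to $O(\eta^{1-s}\Vert v\Vert_{\L^s}^s)$. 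Given $\eps>0$, first fix $\eta$ with $2\,\varpi(2\eta)\,T\,e^{CT} \leq \eps$, then note $\eta^{1-s}\Vert v\Vert_{\L^s}^s = o(\Vert v\Vert_{\L^s})$ as $\Vert v\Vert_{\L^s}\to 0$ because $s>1$; hence $\Vert z\Vert_\C = o(\Vert v\Vert_{\L^s})$, which shows $\E^M$ is Fréchet differentiable with $\D\E^M(u)\cdot v = w^{u,M}_v(T)$, i.e., \eqref{eqdiffM}. (This step, along with the finiteness of the conjugate exponent below, is exactly where $s>1$ is needed; at $s=1$ the mass of $v$ may concentrate on an arbitrarily small set and the bad-set estimate degenerates to $O(\Vert v\Vert_{\L^1})$, only Gâteaux differentiability surviving.)

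\emph{Step~4 ($\C^1$ regularity).} It remains to prove $u \mapsto \D\E^M(u)$ is continuous in operator norm. Let $u_k \to u$ in $\L^s$; passing to a subsequence we may assume $u_k \to u$ a.e., so $x^M_{u_k} \to x^M_u$ in $\C$ by Step~1, and by continuity of $\nabla_x f^M$, $\nabla_u f^M$ the coefficient matrices $A_k(t) = \nabla_x f^M(x^M_{u_k}(t),u_k(t),t)$ and $B_k(t) = \nabla_u f^M(x^M_{u_k}(t),u_k(t),t)$ converge a.e.\ to the corresponding $A(t)$, $B(t)$ for $u$, with uniform bound $C$. For $v \in \L^s$, Gronwall applied to $w^{u_k,M}_v - w^{u,M}_v$ together with Hölder's inequality $\int_0^T \Vert B_k - B\Vert\,\Vert v\Vert_{\R^m}\, dt \leq \Vert B_k - B\Vert_{\L^{s'}}\Vert v\Vert_{\L^s}$ (with $s' = s/(s-1) \in [1,+\infty)$) and the bound $\Vert w^{u,M}_v\Vert_\C \leq C e^{CT} T^{1-1/s}\Vert v\Vert_{\L^s}$ of Step~2 gives $\Vert \D\E^M(u_k) - \D\E^M(u)\Vert_{\mathrm{op}} \leq e^{CT}\big(C e^{CT} T^{1-1/s}\Vert A_k - A\Vert_{\L^1} + \Vert B_k - B\Vert_{\L^{s'}}\big)$. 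Since $A_k - A \to 0$ and $B_k - B \to 0$ a.e.\ and are bounded, the dominated convergence theorem (in $\L^1$ and $\L^{s'}$, both of finite exponent) makes the right-hand side tend to $0$; as every subsequence of $(u_k)$ admits a further subsequence along which this holds, the full sequence converges, and $\E^M \in \C^1$.

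\emph{Main obstacle.} The genuinely delicate point is Step~3: one must quantify how much the first-order Taylor expansion of $f^M$ can fail when $v$ has large sup-norm yet small $\L^s$-norm, and it is precisely $s>1$ that lets the ``bad set'' contribution $\int_{\{\Vert v(t)\Vert_{\R^m} > \eta\}} \big(\text{const}\big)\, dt$ be absorbed into $o(\Vert v\Vert_{\L^s})$; the same inequality $s>1$ (equivalently $s' < +\infty$) is what makes the dominated-convergence argument for operator-norm continuity in Step~4 go through.
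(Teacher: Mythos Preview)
The paper does not actually supply a proof of this proposition; it simply states that the result is ``derived from standard techniques in ordinary differential equations theory.'' Your write-up is precisely a careful implementation of those standard techniques, and the overall architecture (Carath\'eodory well-posedness, candidate linearized system, Taylor remainder estimate with a good-set/bad-set splitting to exploit $s>1$, then operator-norm continuity via dominated convergence in $\L^{s'}$ with $s'<+\infty$) is exactly right.

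There is, however, one slip in Step~3 that, as written, prevents the argument from closing. On the good set $\{\Vert v(t)\Vert_{\R^m}\le\eta\}$ you bound the integrand pointwise by $2\eta\,\varpi(2\eta)$, so the good-set contribution to $\int_0^T\Vert\rho\Vert$ is at most $2\eta\,\varpi(2\eta)\,T$, a \emph{constant} depending only on~$\eta$. After Gronwall this gives a term of size $\eta\eps$ in $\Vert z\Vert_\C$, which is not $o(\Vert v\Vert_{\L^s})$ as $\Vert v\Vert_{\L^s}\to 0$ for fixed~$\eta$; dividing by $\Vert v\Vert_{\L^s}$ actually sends it to $+\infty$. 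The fix is immediate: on the good set keep the factor $(\Vert\delta x_v(t)\Vert_{\R^n}+\Vert v(t)\Vert_{\R^m})$ rather than replacing it by $2\eta$, so that the pointwise bound is $\varpi(2\eta)\,(\Vert\delta x_v(t)\Vert_{\R^n}+\Vert v(t)\Vert_{\R^m})$. Integrating and using Step~1 then yields a good-set contribution $\le\varpi(2\eta)\cdot C_1\Vert v\Vert_{\L^s}$ for some constant~$C_1$. With this bound, for each fixed~$\eta$ one gets $\limsup_{\Vert v\Vert_{\L^s}\to 0}\Vert z\Vert_\C/\Vert v\Vert_{\L^s}\le e^{CT}C_1\,\varpi(2\eta)$, and letting $\eta\to 0$ afterwards gives $o(\Vert v\Vert_{\L^s})$ as desired. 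The bad-set estimate and Step~4 are fine as you wrote them.
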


\begin{remark}\label{remtruncated}
Let~$1 < s \leq +\infty$. For a given control~$u \in \UU$, note that~$\D \E (u)$ given in~\eqref{eq1} admits a natural extension (still denoted by)~$\D \E (u):\L^s([0,T],\R^m)\rightarrow\R^n$. The nontruncated setting is related to the truncated one as follows:
\begin{enumerate}[label=\rm{(\roman*)}]
\item Let~$u \in \mathcal{U}$ and~$M > 0$ be such that~$\Vert x_u \Vert_{\C} \leq M$ and~$\Vert u \Vert_{\L^\infty} \leq M$. Then~$x^M_u = x_u$ and~$\D \E^M (u) = \D \E (u)$ when considering the above extension of~$\D \E (u)$.
\item Let~$u \in \L^\infty([0,T],\R^m)$. If there exists~$M > 0$ such that~$\Vert x^M_u \Vert_{\C} \leq M$ and~$\Vert u \Vert_{\L^\infty} \leq M$, then~$u \in \mathcal{U}$ and~$x_u = x^M_u$.
\end{enumerate}
\end{remark}

\begin{remark}\label{remnotfrechet}
Let~$M > 0$. In the case~$s=1$, it can be proved that the truncated end-point mapping~$\E^M:\L^1([0,T],\R^m)\rightarrow\R^n$ is Gateaux-differentiable and its Gateaux differential is given by~\eqref{eqdiffM}. However it is not Fr\'echet-differentiable (and thus not of class~$\C^1$) in general, as shown in the next example.
\end{remark}

\begin{example}
Take~$T=n=m=1$, $\U = \R$ and~$f(x,u,t) = u^2$ for all~$(x,u,t) \in \R \times \R \times [0,T]$. Consider the starting point~$x^0 = 0$ and the constant control~$u \equiv 0$. In that context, with~$M=s=1$, it is clear that~$x^M_u \equiv 0$ and that the Gateaux differential~$\D^{\mathrm{G}} \E^M(u) :\L^1([0,T],\R^m)\rightarrow\R^n$ of the truncated end-point mapping~$\E^M:\L^1([0,T],\R^m)\rightarrow\R^n$ at~$u$, given by the expression~\eqref{eqdiffM}, is null. Now, taking the needle-like variation~$u^\alpha_{(0,1)}$, as defined in~\eqref{eq_singleneedle}, associated with the pair~$(0,1) \in \mathcal{L}( f_u) \times \U $, we obtain that
$$ \lim\limits_{\alpha \to 0^+} \dfrac{ \E^M( u + u^\alpha_{(0,1)} ) - \E^M (u) - \D^{\mathrm{G}} \E^M(u) \cdot u^\alpha_{(0,1)} }{ \Vert u^\alpha_{(0,1)} \Vert_{\L^1} } = 1. $$
Therefore~$\E^M:\L^1([0,T],\R^m)\rightarrow\R^n$ is not Fr\'echet-differentiable at~$u$.
\end{example}


\bibliographystyle{plain}
\bibliography{bibIEEEbourdintrelat}

%
%


\begin{IEEEbiography}{Lo\"ic Bourdin}
was born in 1986. He received his PhD degree in Applied Mathematics from the University of Pau (France) in 2013. Since 2014, he is associate professor at the University of Limoges (France) and since 2016, he is member of SMAI-MODE group. His mathematical interests are control theory, optimal control, shape optimization problems and nonsmooth analysis.
\end{IEEEbiography}

\begin{IEEEbiography}{Emmanuel Tr\'elat}
was born in 1974. He is full professor at Sorbonne Universit\'e and director of Laboratoire Jacques-Louis Lions. He is the Editor in Chief of the journal \emph{ESAIM: Control Optim. Calc. Var.}, and is Associate Editor of several other journals. He has been awarded the \emph{SIAM Outstanding Paper Prize} (2006), \emph{Maurice Audin Prize} (2010), \emph{Felix Klein Prize} (European Math. Society, 2012), \emph{Blaise Pascal Prize} (french Academy of Science, 2014), \emph{Big Prize Madame Victor Noury} (french Academy of Science, 2016). His research interests range over control theory in finite and infinite dimension, optimal control, stabilization, geometry, numerical analysis.
\end{IEEEbiography}

\end{document}